\documentclass[12pt,reqno]{amsart}
\usepackage{mathtools}
\usepackage{amssymb, mathabx, enumitem, mathrsfs}
\usepackage{color}
\usepackage[colorlinks=true,linkcolor=blue,urlcolor=blue,citecolor=blue, pagebackref]{hyperref}
\usepackage[alphabetic]{amsrefs}
\usepackage{tikz-cd}
\usepackage{tikz}
\usepackage{tikz-3dplot}
\usetikzlibrary{arrows}
\usepackage{graphics}
\usepackage{arydshln}

\usepackage[margin=1in]{geometry}

\newcommand{\C}{\mathbb{C}}

\newcommand{\Z}{\mathbb{Z}}

\newcommand{\op}{\operatorname}

\newcommand{\mf[1]}{\mathfrak{#1}}

\newtheorem{theorem}{Theorem}[section]

\newtheorem{corollary}[theorem]{Corollary}

\newtheorem{proposition}[theorem]{Proposition}

\newtheorem{lemma}[theorem]{Lemma}

\newtheorem{definition}[theorem]{Definition}
\newtheorem{definition/lemma}[theorem]{Definition/Lemma}

\theoremstyle{definition}
\newtheorem{example}{Example}

\title{Reduction rules for Demazure modules}
\author{Marc Besson}
\address{YMSC, Tsinghua University, Beijing 100871, P.R. China}
\email{bessonm@tsinghua.edu.cn}
\author{Sam Jeralds}
\address{University of Sydney, Camperdown, NSW 2006, Australia}
\email{samuel.jeralds@sydney.edu.au}
\author{Joshua Kiers}
\address{Epic Systems Corporation, Verona, WI 53593, USA}
\email{jkiers@epic.com}

\begin{document}
\begin{abstract}
For $G$ a complex reductive group and $B  \subseteq G$ a Borel subgroup, we provide a reduction rule for certain weight multiplicities in Demazure modules $V_\lambda^w$: given a weight $\mu$ on a face of the associated weight polytope $P_\lambda^w$, we reduce the computation of the dimension of the weight space $V_\lambda^w(\mu)$ to a similar problem of computing the weight space dimension for a Demazure module of a Levi subgroup of $G$. 
\end{abstract}
\maketitle

\section{Introduction}


\subsection{Reduction Rules}
For $G$ a complex reductive algebraic group and $G'\subseteq G$ a complex reductive subgroup, \textit{branching problems} seek to understand how representations of $G$ restrict to representations of $G'$. Classical examples include the study of characters of representations (for the case $G'=T$ a maximal torus) and the decomposition of tensor products of irreducible, highest weight representations (for the case $G=G' \times G'$). Various formulations of branching problems have been of central interest in representation theory, and a multitude of algebraic, combinatorial, and geometric tools have been developed to understand how these restrictions behave. 

In this note, we focus on a particular subclass of branching problems, known as \textit{reduction rules}. Reduction rules are equalities in branching problems--reducing a computation of some quantity (a dimension, a multiplicity, etc.) for $G$-representations to a related quantity in a $G'$-representation. To develop reduction rules, two key insights are typically required: first, how can one recognize (based on data from $G$) which specific quantities should satisfy a reduction rule, and second, to what quantities (based on $G'$) they should correspond. 

Many examples of reduction rules appear in the literature, with key motivating examples being rules for Kostka numbers and weight multiplicities \cite{BZ}, Littlewood--Richardson coefficients \cite{DW, KTT1}, tensor product multiplicities \cite{Roth} (and more general branching multiplicities \cite{Ress2}), and dimensions of conformal blocks \cite{Schu}. While proofs of each of these results vary, the reduction rules themselves share many commonalities: first, each reduction is to a similar quantity for a \textit{predictable} smaller rank group (often a Levi subgroup of $G$), and second, while not always phrased in this language, each quantity amenable to the reduction rule corresponds to some \textit{extremal data} for the group $G$; often these are related to faces of an ample cone.

\subsection{Ample cones and reduction rules}

Let $X$ be a projective variety with an action of $G$, and $\mathcal{L}$ a $G$-linearized line bundle on $X$. To this data, one can associate the open locus of semi-stable points of $X$ with respect to $\mathcal{L}$, $X^{ss}(\mathcal{L})$, defined by 
$$
X^{ss}(\mathcal{L}):=\{x \in X | \exists N \geq 1 \ \exists \sigma \in H^0(X, \mathcal{L}^{\otimes N}): \sigma(x) \neq 0\}.
$$
Via the Hilbert--Mumford criterion, existence of semi-stable points can be characterized by a set of inequalities coming from the $G$-linearization on $\mathcal{L}$ and cocharacters of $G$. This reinterprets detecting semi-stable points into a problem of convex geometry; by varying the line bundle $\mathcal{L} \in \mathrm{Pic}^G(X)^+_\mathbb{Q}$ across rational ample line bundles, we arrive at the $G$-ample cone 
$$
\mathcal{AC}^G(X):= \{ \mathcal{L} \in \mathrm{Pic}^G(X)^+_\mathbb{Q} | X^{ss}(\mathcal{L}) \neq \varnothing \}, 
$$
which is a pointed rational convex cone. 

For the above reduction rules, where $X$ is a flag variety (or related variety), the faces of the ample cones $\mathcal{AC}^G(X)$ correspond to Levi subgroups of $G$, and the reductions correspond to those $\mathcal{L}$ lying on faces of the ample cone. Translation back into representation theory is then accomplished by classical results like the Borel--Weil theorem and its generalizations.

\subsection{Reduction rules for Demazure modules}
Taking inspiration from these examples, we now introduce the focus of this note: namely, we seek a reduction rule for weight multiplicities in Demazure modules $V^w_\lambda$, recalled in Section \ref{Notation}. The ultimate goal is to provide a condition on pairs of weights $(\lambda, \mu)$ satisfying $\mathrm{dim} V^w_\lambda(\mu) \neq 0$ that yields an equality of the form $\mathrm{dim}V^w_\lambda(\mu) = \mathrm{dim}V^{w_L}_{\lambda_L}(\mu)$ for a Demazure module $V^{w_L}_{\lambda_L}$ for a Borel subgroup $B_L \subseteq B$ corresponding to a Levi subgroup $L \subseteq G$. 

For the first task, we utilize our previous work on weight polytopes of Demazure modules and their faces $\mathcal{F}(v, \eta)$ depending on a dominant coweight $\eta$ and minimum-length representative $v \in W^\eta$ (see Section \ref{Polytopes}). In \cite{BJK2}, \cite{BJK3}, we used GIT techniques to determine inequalities governing the $T$-ample cones of Schubert varieties $X_w$; by fixing a dominant highest weight $\lambda$, one obtains a compact convex polytope $P^w_\lambda$, the \textit{Demazure weight polytope}. Consistent with the current narrative, the faces of these polytopes are themselves Demazure weight polytopes for a Levi subgroup. The location of the weight $\mu$ in $P^w_\lambda$ relative to the faces provides us with our desired condition for reduction.

For the second task, we derive a reduction rule for weights $\mu$ lying on a prescribed face $\mathcal{F}(v,\eta)$ of a fixed Demazure weight polytope $P^w_\lambda$ in two stages. We first use algebro-geometric techniques, adapting Roth's proof of the reduction rule for Littlewood--Richardson coefficients \cite{Roth}, to get an equality of weight multiplicities for the Levi Demazure module and an ``intermediate" Demazure module $V^q_\lambda$. We then use convex-geometric arguments, along with classical properties of the Demazure operators and the character formula, to show an equality of weight multiplicities for the intermediate Demazure module $V^q_\lambda$ and $V^w_\lambda$. Taken together, these two results give the following desired reduction rule (cf. Theorem \ref{PreciseTheorem} in the text):

\begin{theorem} \label{MainTheoremIntro}
Let $\mu$ be a weight of the Demazure module $V^w_\lambda$ with $\mu$ lying on the face $\mathcal{F}(v,\eta)$ of the associated polytope $P^w_\lambda$. Then 

$$\mathrm{dim} V^w_\lambda(\mu) = \mathrm{dim} V^{w_L}_{\lambda_L}(\mu),$$
where $L:= \dot{v}L_\eta \dot{v}^{-1}$ is the conjugate of the Levi $L_\eta$ determined by $\eta$, $w_L:= v \pi_\eta(w^{-1} \ast v)^{-1} v^{-1}$, $\lambda_L:=v \pi^{\eta}(w^{-1} \ast v) \lambda$, and $V^{w_L}_{\lambda_L}$ the Demazure module for the Borel subgroup $B_L \subseteq L$. 
\end{theorem}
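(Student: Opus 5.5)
The proof has two stages, as the introduction indicates. An intermediate Demazure module $V^q_\lambda$ interpolates between $V^w_\lambda$ and the Levi module $V^{w_L}_{\lambda_L}$. I would take $q := v\,\pi_\eta(w^{-1}\ast v)^{-1}\,\pi^\eta(w^{-1}\ast v)^{-1}$, or the appropriate Demazure-product variant, chosen so that $q\lambda$ is the vertex of $P^w_\lambda$ on the face $\mathcal{F}(v,\eta)$ that is extremal for $\eta$ pulled back by $v$. I would then show
\[
\dim V^w_\lambda(\mu)=\dim V^q_\lambda(\mu)=\dim V^{w_L}_{\lambda_L}(\mu).
\]
First I would record the facts about the face from the earlier polytope section. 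The face $\mathcal{F}(v,\eta)$ is cut out by $\langle v^{-1}\mu,\eta\rangle$ attaining its minimum (or maximum) over $P^w_\lambda$. It equals the Demazure weight polytope of $L=\dot v L_\eta \dot v^{-1}$ with highest weight $\lambda_L$ and Weyl element $w_L$, and it also equals $\mathcal{F}(v,\eta)\cap P^q_\lambda$.

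\textbf{Step 1: $V^w_\lambda(\mu)=V^q_\lambda(\mu)$ on the face.} I would factor $w=q\cdot u$ as a Demazure product such that the extra Demazure operators only move weights strictly off the face. Concretely, I would write the character as $\mathrm{ch}\,V^w_\lambda=D_{w}(e^\lambda)$. I would then use the fact that each $D_{s_\alpha}$ applied to a weight string adds weights in the $\alpha$-direction. For the simple reflections occurring beyond $q$, the pairing $\langle v^{-1}\alpha,\eta\rangle$ has a fixed sign that pushes into the interior of the polytope. The multiplicity of a weight lying on the supporting hyperplane therefore depends only on the part $D_q(e^\lambda)$. Alternatively, I would argue via $V^q_\lambda\subseteq V^w_\lambda$ and show that the quotient has no weights on $\mathcal{F}(v,\eta)$. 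This is a convex-geometric statement: the weights of the quotient come from strings passing through $P^w_\lambda\setminus\mathcal F$.

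\textbf{Step 2: $V^q_\lambda(\mu)=V^{w_L}_{\lambda_L}(\mu)$, following Roth.} I would use Borel–Weil–Demazure, $V^q_\lambda{}^*=H^0(X_q,\mathcal L_\lambda)$, and work with the one-parameter subgroup $\tau=v\eta v^{-1}$. The $T$-weight-$\mu$ sections with $\mu$ on the face are exactly those of extremal $\tau$-weight. By Białynicki-Birula or by restriction to fixed points, such sections are determined by their restriction to the $\tau$-fixed component $X_q^{\tau}\cap(\text{attracting cell})$. This component is a Schubert variety $X^L_{w_L}$ of $L$ (translated by $\pi^\eta$), with $\mathcal L_\lambda$ restricting to $\mathcal L_{\lambda_L}$. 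I would then show that restriction is an isomorphism on the extremal $\tau$-weight space. Injectivity holds because a section vanishing on the fixed locus has strictly larger $\tau$-weight along the retraction. Surjectivity comes from extending sections along the retraction map of the attracting set, which is $L$-equivariant and a vector bundle over the fixed component, together with normality of Schubert varieties.

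The main obstacle I expect is the combinatorial identification in Step 2. One must check that the $\tau$-fixed, $\tau$-attracting part of $X_q$ really is the Levi Schubert variety indexed by $w_L=v\pi_\eta(w^{-1}\ast v)^{-1}v^{-1}$ with weight $\lambda_L$. This requires a careful parabolic decomposition of $w^{-1}\ast v$ and a check that the attracting cell is open in the relevant stratum. It is also the point where the choice of $q$ must be tuned so that Step 1 remains valid.
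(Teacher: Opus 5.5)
Your architecture matches the paper's. Your $q=v\,\pi_\eta(w^{-1}\ast v)^{-1}\pi^\eta(w^{-1}\ast v)^{-1}=v(w^{-1}\ast v)^{-1}$ is exactly the paper's $q$, characterized by $w^{-1}\ast v=q^{-1}v$. The genuine gap is in Step~1. Write $w=uq$ with lengths adding, so that $\mathrm{ch}\,V^w_\lambda$ is obtained from $\mathrm{ch}\,V^q_\lambda$ by applying the operators $D_i$ for the letters of $u$. It is false that these letters all satisfy $\langle v^{-1}\alpha_i,\eta\rangle<0$, and false that they leave the face coefficients untouched. Take type $A_2$, $\eta=\rho^\vee$ (so $L=T$), $v=s_1s_2$, $w=w_0$, $\lambda=\rho$:
\begin{itemize}
\item Then $q=s_2$, and the face $\mathcal{F}(v,\eta)$ is the single weight $s_2\rho=\alpha_1$.
\item The factorization $w_0=s_2s_1\cdot s_2$ forces $\mathrm{ch}\,V^{w_0}_\rho=D_2D_1(\mathrm{ch}\,V^{s_2}_\rho)$.
\item Here $\langle v^{-1}\alpha_1,\eta\rangle=-2$ but $\langle v^{-1}\alpha_2,\eta\rangle=+1$.
\item In the last step $D_2(e^{s_2\rho})=0$, because $\langle s_2\rho,\alpha_2^\vee\rangle=-1$. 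The face coefficient is then regenerated from the off-face weight $\rho$ via $D_2(e^{\rho})=e^{\rho}+e^{s_2\rho}$.
\end{itemize}
The final number agrees, but not by your mechanism. Zero signs also occur. In type $A_2$ with $\eta=x_1$, $v=s_2s_1$, $w=w_0$, one gets $q=s_1$ and $\mathrm{ch}\,V^{w_0}_\lambda=D_1D_2(\mathrm{ch}\,V^{s_1}_\lambda)$ with $\langle v^{-1}\alpha_1,\eta\rangle=0$. There $D_1$ moves weights inside the face, so you would need a separate argument that the face part is already $s_1$-symmetric. Your alternative (``the quotient $V^w_\lambda/V^q_\lambda$ has no weights on the face'') only restates the claim.

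The missing idea is the paper's: do not compare $V^q_\lambda$ and $V^w_\lambda$ directly, but transport both to a common module.
\begin{itemize}
\item Peel off simple reflections with $s_iv<v$. For these, $v\in W^\eta$ forces $\langle v^{-1}\alpha_i,\eta\rangle<0$ (Lemma \ref{ConvexLemma1}).
\item Hence $\mu$ is the top of its $\alpha_i$-string in $P^w_\lambda$, and that string stops no lower than $s_i\mu$ (Lemmas \ref{ConvexLemma2}, \ref{ConvexLemma3}).
\item Inspecting $D_i$ on that string shows the coefficient of $e^{\mu}$ in $\mathrm{ch}\,V^w_\lambda$ equals that of $e^{s_i\mu}$ in $\mathrm{ch}\,V^{s_i\ast w}_\lambda$, and $s_i\mu\in\mathcal{F}(s_iv,\eta)$.
\item Induction identifies the face data of $V^w_\lambda$ with that of $V^{v^{-1}\ast w}_\lambda$ on $\mathcal{F}(e,\eta)$, and likewise for $V^q_\lambda$.
\item Since $q^{-1}\ast v=q^{-1}v=w^{-1}\ast v$, one has $v^{-1}\ast q=v^{-1}\ast w$, which finishes.
\end{itemize}

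Your Step~2 is essentially the paper's Roth-type argument. However, what you list as the main obstacle is exactly the one lemma needed, and it is short. For $\beta\in R(q^{-1})$, maximality of $q^{-1}\ast v=q^{-1}v$ over $[e,q^{-1}]v$ gives $q^{-1}s_\beta v<q^{-1}v$. Since $q^{-1}\beta\prec0$, this forces $v^{-1}\beta\succ0$ (Lemma \ref{inversionsets}). Consequently every coordinate of the cell $BqB/B$ outside $\Phi_L$ has strictly one-signed $\tau$-weight. So the ideal of $B_LqB/B$ has no weights in $-q\lambda+\mathbb{Z}\Phi_L$, which is your injectivity. For surjectivity, pulling back along the retraction only defines sections on the attracting set. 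Its complement is in general a divisor (for $L=T$ it is all of $\partial X_q$), so normality does not let you extend them. Use instead that $H^0(G/B,\mathscr{L}_\lambda)\to H^0(L/B_L,\mathscr{L}_{\lambda_L})$ is onto because the target is $L$-irreducible, followed by the surjective restriction to $X^L_{w_L}$.
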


\subsection{Outline of the paper}
In Section 2, we fix our basic notations and conventions for algebraic groups, flag varieties and Schubert varieties, and representations. We give only a brief sketch, and direct readers to \cite{KumarBook} for a more comprehensive treatment and references. In Section 3, we recall the weight polytopes for Demazure modules and their inequalities, following \cite{BJK2, BJK3}, and give a precise statement of the main result of this note, Theorem \ref{PreciseTheorem}. Sections 4 and 5 taken together construct the proof of this result.

\subsection{Acknowledgements}

We thank Rekha Biswal, whose interests regarding \cite{BJK4} and possible extensions to Demazure modules led to the main question addressed in this paper.


\section{Notations and preliminaries on Demazure modules} \label{Notation}

\subsection{Notation for reductive groups}
We will let $G$ denote a connected reductive algebraic group over $\mathbb{C}$, and we will choose a maximal torus $T$ and a Borel subgroup $B$ so that we have $T \subset B \subset G$. We have weight and coweight lattices $X^*(T)$ and $X_*(T)$, with associated pairing $\langle ,  \rangle : X^*(T) \times X_*(T) \rightarrow \mathbb{Z}$. We write $\Phi$ and $\Phi^{\vee}$ for the root and coroot systems associated to $T \subset G$, so $(\Phi, X^*(T), \Phi^{\vee}, X_*(T))$ is a root datum for $G$. The choice of Borel subgroup determines positive roots $\Phi^+$ as well as a base $\Delta \subset \Phi^+$; we write $\Delta=\{ \alpha_1, \dots \alpha_n \}$ where the $\alpha_i$ are simple roots determined by $B$. We also have $(\Phi^{\vee})^+$ determined by $B$, with base $(\Delta^{\vee})=\{\alpha_1^{\vee}, \dots \alpha_n^{\vee} \}$, and the coefficients $\langle \alpha_i, \alpha_j^{\vee} \rangle $ yield the Cartan matrix associated to the semisimple group $(G,G)$.

In this paper we will often work with polytopes, so it will be useful to work over $\mathbb{Q}$ or $\mathbb{R}$, rather than work integrally. Thus we consider the rational weight and coweight lattices $X^*(T)_{\mathbb{Q}}= X^*(T) \otimes_{\mathbb{Z}} \mathbb{Q}$ and $X_*(T)_\mathbb{Q}=X_*(T) \otimes_{\mathbb{Z}} \mathbb{Q}$. The pairing $\langle, \rangle$ extends by linearity to these $\mathbb{Q}$ vector spaces, so we have $\langle ,\rangle_{\mathbb{Q}} :X^*(T)_{\mathbb{Q}} \times X_*(T)_{\mathbb{Q}} \rightarrow \mathbb{Q}$. We will make use of fundamental weights $\omega_i \in X^*(T)_{\mathbb{Q}}^+$ as well as fundamental coweights $x_i \in X_*(T)_{\mathbb{Q}}^+$; these are determined by \[ \langle \omega_i, \alpha_j^{\vee} \rangle_{\mathbb{Q}}= \delta_{i,j} \] and \[\langle \alpha_i, x_j \rangle_{\mathbb{Q}} = \delta_{i,j}\] respectively.

The dynamical method associates to any dominant coweight $\eta$ a parabolic subgroup \[P_{\eta}:= \{g \in G | \lim_{t \rightarrow 0} \eta(t)g\eta(t)^{-1} ~\mathrm{exists} \}\] and a Levi subgroup \[L_{\eta}:= \{g \in G |\forall t,  \eta(t) g \eta(t)^{-1}=g \}.\]
The roots belonging to $P_{\eta}$ and the root system of $L_{\eta}$ are easy to describe; we have \[\Phi(P_{\eta}, T)=\{ \alpha \in \Phi_G| \langle \alpha, \eta \rangle \geq 0 \}\] and \[\Phi_{\eta}:=\Phi(L_{\eta}, T)=\{\alpha \in \Phi_G| \langle \alpha, \eta \rangle = 0\}.\]

\subsection{Weyl group combinatorics}
Associated to $G$ we have the Weyl group \[W=W_G=N_G(T)/T\]  with Bruhat order $\leq$. Given $w\in W$, we write $\dot w$ for an element of $N_G(T)$ such so that $w=\dot w T$.  Our choice of B, and thus of simple roots $\alpha_i \in \Delta$, yields a system of simple reflections $S=\{s_i \} \subset W$ which generate $W$ under the usual Coxeter relations. We have as usual a length function $\ell: W \rightarrow \mathbb{Z}^{\geq 0}$ such that $\ell(s_i)=1$. The Weyl group $W$ acts on the root system, and we write \[R(w):=\{ \alpha \in \Phi^+ | w.\alpha \in -\Phi^+\};\] this is the inversion set of $w$.

For dominant coweights $\eta$,  we write $W_{\eta}$ for the parabolic subgroup of $W$ generated by the reflections $\{s_i\}$ such that $\alpha_i \in \Phi_{\eta}$. The length function $\ell$ and Bruhat order descend to a length function and Bruhat order on $W_{\eta}$. We write \[W^{\eta} := W/W_{\eta}\] 
for the set of cosets. Each coset $v W_{\eta}$ has a distinguished representative, called \textit{minimum-length representatives}. These can be characterized in the following way: $v \in W$ is a minimum length representative for $vW_{\eta}$ if \[v(\Phi_{\eta}^+) \subset \Phi_G^+.\] Given $w \in W$ and a parabolic Weyl subgroup $W_{\eta}$ we write $\pi^{\eta}(w)$ for a minimum length representative of $wW_{\eta}$. There exists a unique element $u$ of $W_{\eta}$ such that $w= \pi^{\eta}(w) \cdot u$ and $\ell(\pi^{\eta}(w))+\ell(u)=\ell(w)$. We write $\pi_{\eta}(w):=u$. So given $W_{\eta}$ and $w \in W$ we can always write \[w=\pi^{\eta}(w) \cdot \pi_{\eta}(w)\] such that \[\ell(w)=\ell(\pi^{\eta}(w))+\ell(\pi_{\eta}(w)).\]

Finally, we recall the Demazure product. Given $v, w \in W$, the collection of elements $\{xq| x \leq v, q \leq w\}$ has an element which is maximal with respect to the Bruhat order. We denote this element by $v * w$. This element may equivalently be characterized as the maximum element in the left translated interval $v [e, w]$ or as the maximum element in the right translated interval $[e,v] w$. For simple reflections $s_i$, these equivalent definitions yield 
$$
s_i \ast w = \begin{cases}
s_i w, & \ell(s_iw) =\ell(w)+1 \\
w, & \ell(s_i w) = \ell(w)-1 
\end{cases}
$$
and the equivalent inductive construction of the Demazure product, as follows: if $v=v's_i$ with $v' < v$, then 
$$
v \ast w = v' \ast (s_i \ast w).
$$

\subsection{Flag variety geometry}
We let $X$ denote the flag variety; due to our choice of $B$ we can write $X=G/B$. From now on we assume $G$ is simply connected. We may identify $Pic(X) = X^*(T)$; all line bundles $\mathscr{L}$ on $X$ are isomorphic to some $\mathscr{L}_{\lambda}$, the sheaf of sections of $G \times_B \mathbb{C}_{-\lambda}$. Here $\mathbb{C}_{-\lambda}$ is the one-dimensional representation of $B$ where $T$ acts via $\lambda$ and $U$ acts trivially and $G \times_B \mathbb{C}_{-\lambda}$ is the associated bundle construction. Since we have assumed $G$ is simply connected, we have a unique $G$-linearization on each $\mathscr{L}_{\lambda}$; in other words $Pic^G(X) \simeq X^*(T)$. In this work we usually consider $T$-linearizations; we have the short exact sequence \[0 \rightarrow X^*(T) \rightarrow Pic^{T}(X) \rightarrow Pic(X) \rightarrow 0.\] In other words a $T$-linearization on $\mathscr{L}_{\lambda}$ may be specified by an additional character $\mu \in X^*(T)$. An explicit description of the $\mu$-linearization on $\mathscr{L}_{\lambda}$ is given by \[t.[x, z]=[tx, \mu(t)z]\] for $[x,z]$ a point in $G \times_B \mathbb{C}_{-\lambda}$. 
Via the Borel-Weil theorem, and using the $G$-linearization on $\mathscr{L}_{\lambda}$, we have \[H^0(X, \mathscr{L}_{\lambda})^* \simeq V_{\lambda}\] as $G$-representations, where $V_{\lambda}$ denotes the irreducible algebraic representation of $G$ with highest weight $\lambda$. We write $V_{\lambda}(\mu)$ for the $\mu$-weight space of $V_{\lambda}$. Twisting the $T$-linearization on $\mathscr{L}_{\lambda}$ has the effect of shifting the $T$-weights on $V_{\lambda}$; if $\mathbb{L}=\mathscr{L}_{\lambda} \otimes \mathbb{C}_{\mu}$ we have \[(H^0(X, \mathbb{L})^*)^T \simeq V_{\lambda}(\mu).\]  By twisting the $T$-linearizations we may study any individual weight space $V_{\lambda}(\mu)$ as a space of $T$-invariants.

\subsection{Schubert Geometry and Demazure modules}
The Bruhat decomposition \[G=\bigcup_{w \in W} B \dot w B\] yields a stratification of the flag variety  \[X=\bigcup_{w \in W} B \dot wB/B\] by $B$ orbits which are also affine spaces: $B \dot{w} B/B \simeq \mathbb{A}^{l(w)}$. We write $\mathring{X}_w$ for this $B$ orbit, called a Schubert cell. The closure relations among the Schubert cells are given precisely by the Bruhat order and we write $X_w$ for the closure of $\mathring{X}_w$, these are the Schubert varieties. While in general not smooth, the $X_w$ are known to be normal, Cohen-Macaulay, and have rational singularities. We write \[i_w: X_w \rightarrow X\] for the closed embedding.

Demazure modules are $B$ modules which can be constructed as spaces of sections of restrictions of line bundles. Let $\lambda \in X^*(T)^+$, we have the Demazure module associated to $(\lambda, w)$: \[V_{\lambda}^w:=H^0(X_w, i_w^*\mathscr{L}_{\lambda})^*.\]

The characters of these modules can be straightforwardly computed via the Demazure character formula. We have the representation ring $R(T)=\mathbb{Z}[X^*(T)]$ and to each simple reflection $s_i$ we have a linear endomorphism \[D_{i} (x) = \frac{x-e^{-\alpha_i} s_i(x)}{1-e^{-\alpha_i}}\] for $x \in R(T)$. The Demazure character formula then asserts that for any reduced word $\mathfrak{w}=s_{i_1} \dots s_{i_k}$ for $w$ and $\lambda \in X^*(T)^+$ we have \[\mathrm{ch}(V_{\lambda}^w) = D_{i_1} \circ D_{i_2} \circ \dots \circ D_{i_k} (e^{\lambda}).\]

In fact, $s_i \mapsto D_i$ is a representation of the Demazure monoid $(W, *)$, with the Demazure product. In particular, if $\ell(s_i w) =\ell(w)-1$, then $s_i *w=w$ and \[D_{i} \left(\mathrm{ch}(V_{\lambda}^w)\right) = \mathrm{ch}(V_{\lambda}^w).\]
There is also a universal enveloping algebra description of $V_{\lambda}^w$. If $v_{w \lambda}$ is a nonzero extremal weight vector in $V_{\lambda}(w \lambda)$, then we can describe $V_{\lambda}^w$ as the $U(\mathfrak{b})$-submodule generated by $v_{w \lambda}$, where $\mathfrak{b}$ is the Lie algebra of $B$:  \[V_{\lambda}^w = U(\mathfrak{b}) . v_{w \lambda} \subset V_{\lambda}.\]The properties of the Demazure product above can be expressed in the following way. Let $\mathfrak{sl}_{2, \alpha_i}$ denote the embedding of $\mathfrak{sl}_{2}$ into $\mathfrak{g}$ such that $e_i \mapsto e_{\alpha_i}$, $h_{i } \mapsto h_{\alpha_i}$ etc. Then for a reduced word $s_{i_1} \dots s_{i_k}$ for $w$, such that $s_i * w=w$, \[\mathrm{ch}(V_{\lambda}^w) \in R(T)\] is in the image of \[Rep^{fd}(\mathfrak{sl}_{2, \alpha_i}) \rightarrow R^{fd}(\mathfrak{sl}_{2, \alpha_i}) \rightarrow R(T).\] Said a little less pedantically, $\mathrm{ch}(V_{\lambda}^w)$ is an honest (not virtual) character of $\mathfrak{sl}_{2, \alpha_i}$.


\section{Weight polytopes for Demazure modules} \label{Polytopes}

We recall in this section, keeping notations as introduced, the construction and properties of the \textit{Demazure weight polytope} $P_\lambda^w$ associated to the Demazure module $V_\lambda^w$. For a general $T$-module $M$, with $T$ a torus acting semi-simply, understanding the set of weights 
$$
\mathrm{wt}(M):=\{\mu \in X^\ast(T): M(\mu) \neq 0 \}
$$
is a foundational question in representation theory and character theory. As this set is some collection of points in the lattice $X^\ast(T)$, a rough approximation of this data can be obtained by taking the associated weight polytope of $M$, which is the convex hull 
$
\mathrm{conv}_{\mathbb{Q}}\left(\mathrm{wt}(M)\right),
$
which (for the purposes of this paper) we recognize as a compact, convex polytope in $X^\ast(T)_\mathbb{Q}$. Applied to Demazure modules, we get the following objects of central interest:

\begin{definition} Let $V_\lambda^w$ be a Demazure module. The Demazure weight polytope $P_\lambda^w$ is given by $P_\lambda^w:=\mathrm{conv}_{\mathbb{Q}}\left(\mathrm{wt}(V_\lambda^w)\right)$.
\end{definition}

The original interest for Demazure weight polytopes was how close of a ``rough approximation" these were to the character data of the Demazure module $V_\lambda^w$; this question was posed in the language of \textit{saturation of Demazure characters} in \cite{BJK2}, with the conjectural equivalence 
$$
V_\lambda^w (\mu) \neq 0 \iff \mu \in P_\lambda^w \cap (\lambda + Q).
$$
For irreducible, highest-weight $G$-modules (corresponding to the $w=w_0$ case), this equivalence is a classical textbook result. For general Demazure modules, we proved that this equivalence holds for $G$ of classical type and exceptional types $F_4$ or $G_2$ \cite{BJK2}*{Theorem 1.1}, and the third author has checked that this equivalence holds for type $E_6$. Types $E_7$ and $E_8$, along with a more uniform proof, remain open.

\subsection{Faces and Inequalities of $P_\lambda^w$}

We recall now from \cite{BJK2} and \cite{BJK3} the relevant combinatorial and convex-geometric features of $P_\lambda^w$ which will be of use for the reduction rule of Theorem \ref{PreciseTheorem}.

As compact convex polytopes can be defined both by their vertices as well as their inequalities, a first pass at understanding the combinatorial structure of $P_\lambda^w$ is given by the following lemma, which appears in \cite{BJK2}*{Theorem 4.7} but is also implicit in the work of Dabrowski \cite{Dab}*{page 119}. 

\begin{lemma} \label{Vertices}
For any $\lambda \in X^\ast(T)^+$ and $w \in W$, we have 
$$
P_\lambda^w = \mathrm{conv}_\mathbb{Q} \left(\left \{x \lambda | x \leq w \right \} \right).
$$
\end{lemma}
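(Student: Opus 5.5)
We prove the two inclusions separately.

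For $\supseteq$, we show that each $x\lambda$ with $x\le w$ is a weight of $V_\lambda^w$. The Schubert variety $X_x$ is contained in $X_w$, and the $T$-fixed points of $X_w$ are exactly the points $\dot xB/B$ with $x\le w$. Restricting sections of $\mathscr{L}_\lambda$ along $X_x\subseteq X_w$ gives a surjection $H^0(X_w,\mathscr{L}_\lambda)\to H^0(X_x,\mathscr{L}_\lambda)$; this is the standard surjectivity of restriction between Schubert varieties for dominant $\lambda$. Dualizing gives $V_\lambda^x\hookrightarrow V_\lambda^w$. Alternatively, and more elementarily, $V_\lambda^x=U(\mathfrak b)v_{x\lambda}\subseteq U(\mathfrak b)v_{w\lambda}$ for $x\le w$, which is the classical inclusion of Demazure modules. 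Either way $v_{x\lambda}\in V_\lambda^w$, so $x\lambda\in\mathrm{wt}(V_\lambda^w)$. Since $P_\lambda^w$ is convex, it contains the convex hull of these points.

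For $\subseteq$, we show that every weight $\mu$ of $V_\lambda^w$ lies in $Q:=\mathrm{conv}\{x\lambda: x\le w\}$. We induct on $\ell(w)$ using the Demazure character formula. The case $w=e$ is trivial, since then $\mathrm{ch}=e^\lambda$. Write $w=s_iw'$ with $\ell(w)=\ell(w')+1$, so that $\mathrm{ch}(V_\lambda^w)=D_i(\mathrm{ch}(V_\lambda^{w'}))$. For a monomial $e^\nu$ with $n=\langle\nu,\alpha_i^\vee\rangle$, the operator $D_i(e^\nu)$ is a signed sum of $e^{\nu-k\alpha_i}$:
\begin{itemize}
\item if $n\ge 0$, the terms $e^{\nu-k\alpha_i}$, $0\le k\le n$, all appear with coefficient $+1$;
\item if $n=-1$, the result is $0$;
\item if $n\le -2$, the terms $e^{\nu+k\alpha_i}$, $1\le k\le -n-1$, appear with coefficient $-1$.
\end{itemize}
In every case the weights that appear lie on the segment $[\nu,s_i\nu]$. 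So every weight of $V_\lambda^w$ lies in $\mathrm{conv}\bigl(\mathrm{wt}(V_\lambda^{w'})\cup s_i\,\mathrm{wt}(V_\lambda^{w'})\bigr)$. Cancellation can only remove weights, so this containment is enough.

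By induction, $\mathrm{wt}(V_\lambda^{w'})\subseteq\mathrm{conv}\{y\lambda: y\le w'\}$, and hence $s_i\,\mathrm{wt}(V_\lambda^{w'})\subseteq\mathrm{conv}\{s_iy\lambda: y\le w'\}$. It remains to check that $y\le w'$ implies both $y\le w$ and $s_iy\le w$. The first follows from $w'<w$. For the second, $s_iy\le s_i\ast w'=w$ by the characterization of the Demazure product as the maximum of $s_i[e,w']$. This closes the induction.

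The only delicate point is the sign bookkeeping in $D_i(e^\nu)$: negative coefficients might seem to create weights outside the region. But the support of $D_i(e^\nu)$ always lies on the segment $[\nu,s_i\nu]$, and any weight surviving in the total character is in the union of these supports. So the convex-hull containment holds regardless of cancellation, and the rest of the argument is formal.
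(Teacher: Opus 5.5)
Your proof is correct. The paper itself does not prove Lemma \ref{Vertices}: it quotes the result from \cite{BJK2}*{Theorem 4.7} and remarks that it is implicit in \cite{Dab}, so there is no in-paper argument to match.

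Your $\supseteq$ direction needs only $v_{x\lambda}\in V^x_\lambda\subseteq V^w_\lambda$ for $x\le w$. The paper uses this inclusion without comment in the proof of Proposition \ref{LeviModuleInclusion}.

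Your $\subseteq$ direction is an induction on $\ell(w)$. It uses exactly the three-case formula for $D_i(e^\nu)$ that the paper writes out in the proof of Proposition \ref{MultProp}, together with $s_i[e,w']\subseteq[e,s_i\ast w']$, which comes from the characterization of the Demazure product in Section 2. You also correctly observe that only the support of $D_i(e^\nu)$ matters. That support lies on the segment from $\nu$ to $s_i\nu$, so signs and cancellations cannot push weights outside the convex hull.

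Citing the result gives the authors brevity and ties the vertex description to the framework of \cite{BJK2}, where the same polytope is also cut out by the Hilbert--Mumford inequalities of Theorem \ref{FacetInequalities}. Your argument makes the lemma self-contained and free of any GIT input. It rests only on the Demazure character formula and Bruhat-order combinatorics already used in Sections 2 and 5.

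Your remark about the $T$-fixed points of $X_w$ is true but plays no role in the argument.
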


Lemma \ref{Vertices} gives that the vertices of the polytope $P_\lambda^w$ are determined by a subset of the \textit{extremal weights} $x\lambda \in X^\ast(T)$. This description simultaneously connects Demazure weight polytopes with objects of interest in the literature including pseudo-Weyl polytopes, generalized Coxeter permutohedra, and Bruhat interval polytopes. The relationship to the latter will be of note, as the structure of the faces of Demazure weight polytopes shares key properties with the more familiar Bruhat interval polytopes of the symmetric group. 

One immediate consequence of Lemma \ref{Vertices} is the following corollary.

\begin{corollary} \label{StablePolytope}

Let $s \in W$ be a simple reflection with $sw < w$. Then $P_\lambda^w$ is stable under the action of $s$: $s(P_\lambda^w)=P_\lambda^w$.

\end{corollary}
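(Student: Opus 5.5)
By Lemma \ref{Vertices}, $P_\lambda^w$ is the convex hull of the finite set $\{x\lambda \mid x \leq w\}$. The simple reflection $s$ acts linearly on $X^\ast(T)_\mathbb{Q}$, and linear maps commute with taking convex hulls. Hence
$$
s(P_\lambda^w) = \mathrm{conv}_\mathbb{Q}\left(\{ s x\lambda \mid x \leq w\}\right).
$$
It therefore suffices to show that the vertex set is stable under $s$, in the sense that
$$
\{sx \mid x \leq w\} = \{x \mid x \leq w\}
$$
as subsets of $W$. This is stronger than what we need, since only the images $x\lambda$ matter. Because $s$ is an involution, it is enough to prove the inclusion $\subseteq$: if $x \leq w$ then $sx \leq w$.

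This inclusion is the standard lifting property of Bruhat order, and I would derive it from the Demazure product recalled in Section \ref{Notation}. Since $sw < w$, we have $\ell(sw) = \ell(w) - 1$, so $s \ast w = w$. The Demazure product $s \ast w$ is the maximal element of $\{yq \mid y \leq s,\ q \leq w\}$. Now take $x \leq w$. Using $y = s$ and $q = x$ shows that $sx$ lies in this set, so $sx \leq s \ast w = w$. This single observation carries the whole argument. The only care needed is to use the characterization of $s\ast w$ as a maximum of products $yq$ with $y\le s$, $q\le w$, which the paper states explicitly.

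Combining the two steps, $s$ permutes the set $\{x\lambda \mid x\le w\}$. Therefore $s(P_\lambda^w)=P_\lambda^w$.
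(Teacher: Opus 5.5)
Your proof is correct and matches the paper's: both use Lemma \ref{Vertices} and the linearity of $s$ to reduce to the stability $s[e,w]=[e,w]$ of the lower Bruhat interval. The one difference is that the paper cites the Z-property (diamond lemma) of \cite{BGG} for this stability, whereas you derive it from $s\ast w = w$ together with the characterization of $s\ast w$ as the maximum of $\{yq \mid y\le s,\ q\le w\}$ recalled in Section \ref{Notation}, which is equally valid.
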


\begin{proof}
Note that by the so-called ``Z-property" or ``diamond lemma" for the Bruhat order \cite{BGG}*{Lemma 2.5}, if $sw < w$ then $s\left( \{x: x \leq w\} \right) = \{x : x \leq w\}$, so the lower Bruhat interval $[e,w]$ is stable under left-multiplication by $s$. Then clearly 
$$
s(P_\lambda^w) = \mathrm{conv}_\mathbb{Q}\left(\{sx\lambda : x \leq w\} \right) = P_\lambda^w.
$$
\end{proof}
\noindent Alternatively, this follows from the fact that if $s=s_i$ is the simple reflection associated to simple root $\alpha_i$, $V_\lambda^w$ is an $\mathfrak{sl}_{2, \alpha_i}$-module.

On the other hand, the description of $P^w_\lambda$ in terms of its inequalities was given in \cite{BJK2} using methods of Geometric Invariant Theory (GIT), via the Hilbert--Mumford criterion for semistability. Briefly, given any dominant cocharacter $\eta \in X_\ast(T)^+$ and $v \in W^\eta$ a minimum-length coset representative, the Hilbert--Mumford criterion gives a numerical condition for checking the semistability of a point $x \in X_w$ in the Schubert variety. This in turn yields a set of inequalities for the possible weights of $V^w_\lambda$:
$$
\mu \in P^w_\lambda \text{ if and only if }
\langle \lambda, (w^{-1} \ast v) \eta \rangle \leq \langle \mu, v \eta \rangle
$$
for all choices of $\eta$ and $v$ as above. In a natural way, we can reduce this list of inequalities to those which parametrize the \textit{facets}, by reducing to the case of fundamental coweights \cite{BJK2}*{Theorem 6.9}.

\begin{theorem} \label{FacetInequalities}
Let $\lambda$ be a dominant weight and $w \in W$. Then $\mu \in P^w_\lambda$ if and only if, for every maximal parabolic $P=P_i$ associated to the fundamental coweight $x_i$ and $v \in W^P$, the inequality 
$$
\langle \lambda, (w^{-1} \ast v)x_i \rangle \leq \langle \mu, vx_i \rangle
$$
holds. 
\end{theorem}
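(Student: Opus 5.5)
The plan is to derive Theorem \ref{FacetInequalities} from the general Hilbert--Mumford description recalled just above. That description says $\mu \in P^w_\lambda$ if and only if
$$
\langle \lambda, (w^{-1}\ast v)\eta\rangle \le \langle \mu, v\eta\rangle
$$
for every dominant $\eta \in X_\ast(T)^+$ and every $v \in W^\eta$. The facet inequalities are exactly the instances $\eta = x_i$, $v \in W^{P_i}$. So the direction ``$\mu\in P^w_\lambda$ implies the facet inequalities'' is immediate. For the converse, I must show that the inequality for an arbitrary pair $(\eta,v)$ follows from inequalities for fundamental coweights.

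Write $\eta = \sum_{i\in I} c_i x_i$ with $c_i > 0$ rational, where $I$ is the set of $i$ with $\langle\alpha_i,\eta\rangle \neq 0$. Both sides of the inequality scale positively, so we may clear denominators. Then $W_\eta = \bigcap_{i\in I} W_{x_i}$. For each $i\in I$, let $v_i := \pi^{x_i}(v)$ be the minimal representative of $vW_{x_i}$. We have $v x_i = v_i x_i$, since $W_{x_i}$ fixes $x_i$. Hence the right-hand side decomposes as
$$
\langle \mu, v\eta\rangle = \sum_{i\in I} c_i \langle \mu, v_i x_i\rangle .
$$
Summing the hypothesised facet inequalities for the pairs $(x_i,v_i)$ gives
$$
\sum_i c_i\langle\lambda,(w^{-1}\ast v_i)x_i\rangle \le \langle\mu,v\eta\rangle .
$$
It therefore suffices to show that for each $i$,
$$
\langle \lambda, (w^{-1}\ast v)x_i\rangle \le \langle \lambda, (w^{-1}\ast v_i)x_i\rangle ,
$$
since the left side summed against the $c_i$ equals $\langle\lambda,(w^{-1}\ast v)\eta\rangle$.

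The key step, and the main obstacle, is this last inequality. First, $v = v_i u$ with $u\in W_{x_i}$ and lengths adding, so $v_i \le v$. By monotonicity of the Demazure product, $w^{-1}\ast v_i \le w^{-1}\ast v$ in Bruhat order. Second, I would show that $w^{-1}\ast v$ and $w^{-1}\ast v_i$ differ only by right multiplication by an element of $W_{x_i}$, up to a Bruhat-larger coset.

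Concretely, the plan uses the description of $w^{-1}\ast v$ as the maximum of $[e,w^{-1}]\,v$. Then
$$
w^{-1}\ast v = \max\bigl([e,w^{-1}]\,v_i u\bigr).
$$
The cosets $yv_iW_{x_i}$ for $y\le w^{-1}$ have a Bruhat-maximal one under projection $\pi^{x_i}$, namely the coset of $w^{-1}\ast v_i$. So $(w^{-1}\ast v)x_i$ is of the form $z x_i$ with $\pi^{x_i}(z) \le \pi^{x_i}(w^{-1}\ast v_i)$, and in fact equal. Hence the two pairings agree, and the inequality holds with equality.

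If equality of cosets proves delicate, I would fall back on the weaker statement. For dominant $\lambda$ and dominant $x_i$, the map $z\mapsto \langle\lambda, z x_i\rangle$ is weakly decreasing in Bruhat order on $W^{x_i}$, by the standard fact that $s_\beta z x_i = z x_i - \langle\beta, zx_i\rangle\beta^\vee$ lowers along Bruhat covers. Combined with $\pi^{x_i}(w^{-1}\ast v_i) \le \pi^{x_i}(w^{-1}\ast v)$, this reverses to give exactly the needed inequality, because larger elements give smaller pairing. Once this is established, summing completes the converse and the proof.
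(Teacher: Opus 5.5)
Your proposal is correct and follows the reduction the paper indicates (the paper defers the details to \cite{BJK2}*{Theorem 6.9}): write a dominant $\eta$ as a positive combination of fundamental coweights and observe that each summand is a facet inequality for $(x_i,\pi^{x_i}(v))$. The key identity $(w^{-1}\ast v)x_i=(w^{-1}\ast v_i)x_i$ follows most cleanly from associativity, $w^{-1}\ast v=(w^{-1}\ast v_i)\ast u\in (w^{-1}\ast v_i)W_{x_i}$; your fallback monotonicity argument is also valid, and for the forward direction you only need to replace $x_i$ by an integral multiple $Nx_i\in X_\ast(T)^+$, which changes neither $W_{x_i}$ nor the inequality.
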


Note that, in the statement of Theorem \ref{FacetInequalities} and in the preceding discussion, we could without change have reduced to the minimum-length representative $\pi^\eta(w^{-1} \ast v) \in W^\eta$ and $\pi^P(w^{-1} \ast v) \in W^P$ in place of $(w^{-1} \ast v)$ in the inequalities. This change, while not affecting the inequalities, is useful in understanding the structure of the faces of $P^w_\lambda$. To this end, we label arbitrary faces (not just facets) by $v$ and $\eta$ determining these inequalities.

\begin{definition} \label{FaceDef}
Let $\eta \in X_\ast(T)^+$ be a dominant coweight, $v \in W^\eta$ a minimum-length coset representative. We denote by $\mathcal{F}(v,\eta)$ the face of $P^w_\lambda$ given by 
$$
\mathcal{F}(v,\eta):=\{\mu \in P^w_\lambda | \langle \lambda, \pi^\eta(w^{-1} \ast v) \eta \rangle = \langle \mu, v \eta \rangle \}.
$$
\end{definition}

The faces $\mathcal{F}(v,\eta)$ have a rich combinatorial structure, which can be used to identify them with convex hulls of orbits for lower intervals in parabolic subgroups $W_\eta \subseteq W$ \cite{BJK2}*{Proposition 7.4}, \cite{BJK3}*{Theorem 10.8}; this also, as is expected by the similarity to the Bruhat interval polytopes of Tsukerman and Williams \cite{TW}, demonstrates the philosophy that faces of Bruhat interval polytopes are themselves Bruhat interval polytopes. 

%
%

At the representation-theoretic level, the combinatorial identification can be upgraded to interpret these faces $\mathcal{F}(v,\eta)$ as Demazure weight polytopes themselves. Let $L_\eta \subseteq G$ be the standard Levi determined by the dominant coweight $\eta$ and $B_\eta:=B \cap L_\eta \subseteq B$ its Borel subgroup. We set $L:=\dot{v}L_\eta \dot{v}^{-1}$ to be the conjugate of the Levi by $v \in W^\eta$, and note that its Borel subgroup $B_L = \dot{v} B_\eta \dot{v}^{-1} \subseteq B$ as $v \in W^\eta$. Then $V^w_\lambda$ is canonically a $B_L$-module. The following proposition crucially lets us compare a Demazure module for $B_L$ and the Demazure module $V^w_\lambda$ for $B$ which recovers the relationship between the face $\mathcal{F}(v,\eta)$ and the polytope $P^w_\lambda$. We sketch the proof given in \cite{BJK2} for the convenience of the reader.

\begin{proposition}{\cite{BJK2}*{Proposition 7.9}} \label{LeviModuleInclusion}

Let $w_L:=v \left(\pi_\eta(w^{-1} \ast v ) \right)^{-1} v^{-1} $, $\lambda_L:= v \left(\pi^\eta (w^{-1} \ast v) \right)^{-1} \lambda $, and let $q \in [e,w]$ be the unique element of the Weyl group such that $w^{-1} \ast v= q^{-1} v$. Then every map in the following commutative diagram is a $B_L$-equivariant inclusion: 
\begin{center}
\begin{tikzcd}
V_{\lambda_L}^{w_L} \arrow[r] \arrow[d]& V_\lambda^q \arrow[r] & V_{\lambda}^w \arrow[d] \\
V_{\lambda_L} \arrow[rr] & &  V_\lambda
\end{tikzcd}
\end{center}
where, in particular, we view $V^{w_L}_{\lambda_L}$ and $V_{\lambda_L}$ as Demazure and irreducible highest weight modules for $B_L$ and $L$, respectively. 
\end{proposition}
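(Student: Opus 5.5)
The plan is to realize each map in the diagram as an inclusion of $U(\mathfrak{b}_L)$- or $U(\mathfrak{b})$-submodules of $V_\lambda$, generated by extremal weight vectors. Write $u := \pi^\eta(w^{-1}\ast v)$ and $y := \pi_\eta(w^{-1}\ast v)$, so that $w^{-1}\ast v = uy = q^{-1}v$.

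\textbf{Step 1: the element $q$ and the bottom row.}
\begin{enumerate}
\item Show that $q$ exists, is unique, and lies in $[e,w]$. Since $w^{-1}\ast v$ is the maximum of $[e,w^{-1}]\,v$, we have $w^{-1}\ast v = x v$ for some $x\le w^{-1}$. Set $q := x^{-1}$; then $q\le w$, and $q$ is unique because $q^{-1} = (w^{-1}\ast v)v^{-1}$.
\item Put $g := \dot v\,\dot u^{-1}$, so that $g\cdot V_\lambda(u\lambda) = V_\lambda(vu^{-1}\lambda)$. Note that $vu^{-1}\lambda = \lambda_L$ as defined in Proposition \ref{LeviModuleInclusion}.
\item Because $u\in W^\eta$ is a minimal coset representative, $u^{-1}\lambda$ is dominant for $L_\eta$. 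Hence $\lambda_L = v(u^{-1}\lambda)$ is dominant for $L = \dot v L_\eta \dot v^{-1}$ with respect to $B_L$, using $v(\Phi_\eta^+)\subset\Phi^+$.
\item Show that $\lambda_L$ is a highest weight for $L$ inside $V_\lambda$: for $\alpha\in v(\Phi_\eta^+)$, the weight $\lambda_L+\alpha$ is not a weight of $V_\lambda$. Equivalently, $u^{-1}\lambda + \beta$ is not a weight for $\beta\in\Phi_\eta^+$, which follows from $u\beta\in\Phi^+$, since then $\lambda + u\beta$ is not a weight of $V_\lambda$.
\item Therefore $U(\mathfrak l)\cdot v_{\lambda_L}\cong V_{\lambda_L}$, which gives the $L$-equivariant (in particular $B_L$-equivariant) bottom inclusion.
\end{enumerate}

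\textbf{Step 2: the left and top maps.} The Demazure module for $B_L$ is $V^{w_L}_{\lambda_L} = U(\mathfrak b_L)\cdot v_{w_L\lambda_L}$, sitting inside $V_{\lambda_L}\subset V_\lambda$. Compute its generating weight:
\[
w_L\lambda_L = v y^{-1} v^{-1}\, v u^{-1}\lambda = v(uy)^{-1}\lambda = v v^{-1} q\lambda = q\lambda .
\]
So the extremal vector $v_{w_L\lambda_L}$ is (up to scalar) $v_{q\lambda}$, the generator of $V^q_\lambda$. Since $\mathfrak b_L\subseteq\mathfrak b$, we get
\[
U(\mathfrak b_L)\, v_{q\lambda} \subseteq U(\mathfrak b)\, v_{q\lambda} = V^q_\lambda .
\]
One check is needed here: $w_L\lambda_L$ must be an extremal weight of the $L$-module, so that the $L$-Demazure module is well defined. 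This holds because $w_L\in vW_\eta v^{-1} = W_L$. The map $V^q_\lambda\hookrightarrow V^w_\lambda$ is the standard inclusion of Demazure modules for $q\le w$, namely $X_q\subseteq X_w$ together with the surjectivity of restriction of sections. The right vertical map is $V^w_\lambda\subseteq V_\lambda$ by definition.

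\textbf{Step 3: commutativity.} Every object has now been realized as a subspace of $V_\lambda$ and every map as an inclusion, so the diagram commutes automatically, provided the scalar normalization of the extremal vectors is fixed consistently. All maps are $B_L$-equivariant because $\mathfrak b_L\subseteq\mathfrak b\cap\mathfrak l$.

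\textbf{Main obstacle.} I expect the hardest part to be the bookkeeping in Step 2: confirming that $y^{-1}$ is the right element of $W_\eta$ to produce the Demazure module with lowest extremal weight $q\lambda$, and that the $L$-Demazure module generated by $v_{q\lambda}$ is genuinely $V^{w_L}_{\lambda_L}$ rather than one for a different Levi Weyl group element with the same extremal weight. To settle this, I would use the fact that $U(\mathfrak b_L)\,v_{x\lambda_L}$ depends only on the extremal weight $x\lambda_L$, and then pass to the minimal such $x$.
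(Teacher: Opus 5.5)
Your proposal is correct and is essentially the paper's argument. Both realize everything inside $V_\lambda$, show that the weight vector of weight $\lambda_L = vu^{-1}\lambda$ is annihilated by the nilradical of $\mathfrak b_L$ (so it generates $V_{\lambda_L}$), compute $w_L\lambda_L = q\lambda$, and conclude $V^{w_L}_{\lambda_L} = U(\mathfrak b_L)v_{q\lambda}\subseteq U(\mathfrak b)v_{q\lambda} = V^q_\lambda\subseteq V^w_\lambda$ using $B_L\subseteq B$.

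Two harmless remarks. In Step 1(2), $\dot v\dot u^{-1}$ carries $V_\lambda(\lambda)$, not $V_\lambda(u\lambda)$, onto $V_\lambda(vu^{-1}\lambda)$, though this step is never used. The ``main obstacle'' you flag is a non-issue: the Demazure module is $U(\mathfrak b_L)$ applied to a one-dimensional extremal weight space, so it depends only on the extremal weight $w_L\lambda_L$.
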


\begin{proof}
It is readily checked that $\lambda_L$ is a highest weight for the (possibly reducible) $L$-module $V_\lambda$, as $B_L$ stabilizes the line spanned by $v\left(\pi^\eta(w^{-1}\ast v) \right)^{-1} v_\lambda$. Thus there is an irreducible $L$-submodule of $V_\lambda$ isomorphic to $V_{\lambda_L}$; this must be the unique such submodule, as $$\dim V_\lambda(v\left(\pi^\eta(w^{-1}\ast v) \right)^{-1} \lambda) = 1$$ and $L$ contains the full torus. 

Then sitting inside of $V_{\lambda_L} \subseteq V_\lambda$ we form the $B_L$ Demazure module $V_{\lambda_L}^{w_L}$; note that 
$$
\begin{aligned}
w_L \lambda_L &= v \left(\pi_\eta(w^{-1} \ast v) \right)^{-1} v^{-1} v \left( \pi^\eta( w^{-1} \ast v) \right)^{-1} \lambda \\
&= v(w^{-1} \ast v)^{-1}\lambda \\
&= q\lambda
\end{aligned}
$$
by choice of $q \leq w$ with $q^{-1}v=w^{-1} \ast v$. Then $V^{w_L}_{\lambda_L}$ is the smallest $B_L$-submodule containing $q\lambda$, hence the smallest $B_L$-submodule of $V_\lambda$ containing $q\lambda$, so naturally sits inside the $B$-submodule $V^q_\lambda$. Finally, as $q \leq w$, $V^q_\lambda \subseteq V^w_\lambda$. 
\end{proof}

As a key takeaway from the previous proof, the lowest weight of the $B_L$ Demazure module $V^{w_L}_{\lambda_L}$ is precisely $q\lambda$, and is the lowest weight of $P^w_\lambda$ on the face $\mathcal{F}(v,\eta)$. An immediate corollary to this result is the following identification. 

\begin{corollary}{\cite{BJK2}*{Corollary 7.7}}
Let $P_{\lambda_L}^{w_L}$ be the Demazure weight polytope for the $B_L$-module $V^{w_L}_{\lambda_L}$. Then we have an equality of convex polytopes
$$
P_{\lambda_L}^{w_L} = \mathcal{F}(v, \eta).
$$
\end{corollary}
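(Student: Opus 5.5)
The corollary asks for an equality of polytopes $P_{\lambda_L}^{w_L} = \mathcal{F}(v,\eta)$. I would prove the two inclusions separately, using Lemma \ref{Vertices} for both $G$ and $L$ to pass to vertex descriptions. By that lemma applied to $L$ with Borel $B_L$ and Weyl group $vW_\eta v^{-1}$,
$$P_{\lambda_L}^{w_L}=\mathrm{conv}_\mathbb{Q}\{x\lambda_L : x\le_L w_L\}.$$
Conjugating by $v$, the elements $x\le_L w_L$ are exactly $vuv^{-1}$ with $u\le \pi_\eta(w^{-1}\ast v)^{-1}$ in $W_\eta$; this uses that conjugation by $v\in W^\eta$ carries the simple reflections of $W_\eta$ to those of $vW_\eta v^{-1}$ relative to $B_L$. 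Writing $p:=\pi^\eta(w^{-1}\ast v)$ and $u_0:=\pi_\eta(w^{-1}\ast v)$, the vertices of $P_{\lambda_L}^{w_L}$ are therefore $v u p^{-1}\lambda$ with $u\le u_0^{-1}$ in $W_\eta$.

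For $P_{\lambda_L}^{w_L}\subseteq \mathcal{F}(v,\eta)$, I would argue as follows. By Proposition \ref{LeviModuleInclusion}, $V_{\lambda_L}^{w_L}\subseteq V_\lambda^w$, so every weight of $V_{\lambda_L}^{w_L}$ lies in $P_\lambda^w$. It remains to check that each vertex satisfies the defining equality of the face:
$$\langle vup^{-1}\lambda, v\eta\rangle=\langle \lambda, p u^{-1}\eta\rangle=\langle\lambda,p\eta\rangle,$$
since $u\in W_\eta$ fixes $\eta$. So every vertex lies on $\mathcal{F}(v,\eta)$, and convexity gives the inclusion.

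For the reverse inclusion $\mathcal{F}(v,\eta)\subseteq P_{\lambda_L}^{w_L}$, it suffices to show that every vertex of $\mathcal{F}(v,\eta)$ lies in $P_{\lambda_L}^{w_L}$. Vertices of a face are vertices of $P_\lambda^w$, hence of the form $x\lambda$ with $x\le w$, and they satisfy $\langle \lambda, x^{-1}v\eta\rangle=\langle\lambda,p\eta\rangle$. By the GIT inequality, $p\eta$ minimizes $\langle\lambda,y\eta\rangle$ over $y\in[e,w^{-1}]\,v$, a set containing $x^{-1}v$. The key step is to show that equality forces $x\lambda$ to equal $vup^{-1}\lambda$ for some $u\le u_0^{-1}$.

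I expect this to be the main obstacle. I would first try a direct Weyl-group argument: equality of the pairing gives $x^{-1}v\eta$ in the same $W_\lambda$-orbit as $p\eta$ in the appropriate sense, and then exploit the maximality of $w^{-1}\ast v=q^{-1}v$ in $[e,w^{-1}]v$ to get $x^{-1}v\in W_\lambda\, p\,[e,u_0]$. If that combinatorics resists, I would instead cite the face identification of \cite{BJK2}*{Proposition 7.4} / \cite{BJK3}*{Theorem 10.8}, which describes $\mathcal{F}(v,\eta)$ as the convex hull of such a parabolic lower-interval orbit, and match it with the vertex set computed in the first paragraph.
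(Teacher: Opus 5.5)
Your vertex computation for $P^{w_L}_{\lambda_L}$ and your proof of the inclusion $P^{w_L}_{\lambda_L}\subseteq \mathcal{F}(v,\eta)$ are correct. The reverse inclusion is where the content lies, and as written you do not prove it. The direct argument is only gestured at, and its first clause is not right as stated: the equality $\langle\lambda,x^{-1}v\eta\rangle=\langle\lambda,p\eta\rangle$ alone does not place $x^{-1}v$ in $W_\lambda pW_\eta$, since you already need $x^{-1}v\le w^{-1}\ast v$ at that point. Your fallback, citing \cite{BJK2}*{Proposition 7.4}, is legitimate and is in effect what the paper does. The paper gives no independent proof; it imports the statement from \cite{BJK2} as immediate from that combinatorial face description together with Proposition \ref{LeviModuleInclusion}, which pins down the lowest weight $q\lambda=w_L\lambda_L$ of the face. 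So relative to the paper nothing is lost. Your direct route does close, however, by using $y:=x^{-1}v\le w^{-1}\ast v=pu_0$ twice. Here $W_\lambda$ denotes the stabilizer of $\lambda$.

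\emph{Step 1: the double coset.} Choose a chain $y=y_0<y_1<\cdots<y_k=pu_0$ with $y_{j+1}=s_{\beta_j}y_j$ and $\beta_j\in\Phi^+$, so that $y_j^{-1}\beta_j\in\Phi^+$. Then $\langle\lambda,y_j\eta\rangle-\langle\lambda,y_{j+1}\eta\rangle=\langle\lambda,\beta_j^\vee\rangle\langle y_j^{-1}\beta_j,\eta\rangle\ge 0$. These terms telescope to $\langle\lambda,y\eta\rangle-\langle\lambda,p\eta\rangle=0$, so each step has $s_{\beta_j}\in W_\lambda$ or $s_{y_j^{-1}\beta_j}\in W_\eta$. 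Hence $y\in W_\lambda pW_\eta$.

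\emph{Step 2: factorization.} Let $d$ be the minimal element of this double coset. Since $p\in W^\eta$, we have $p=ad$ with $a\in W_\lambda$ (in a length-additive factorization $p=adb''$ with $b''\in W_\eta$, the factor $b''$ must be trivial). Thus $p^{-1}\lambda=d^{-1}\lambda$. Let $y'$ be the minimal element of $W_\lambda y$. Then $\pi^\eta(y')$, a length-additive left factor of $y'$, is minimal both in its left $W_\lambda$-coset and in its right $W_\eta$-coset. Hence $\pi^\eta(y')=d$, and $y'=db$ with $b\in W_\eta$.

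\emph{Step 3: bounding $b$.} Project $y'\le y\le adu_0$ to minimal left $W_\lambda$-coset representatives; this gives $db\le \min(W_\lambda du_0)\le du_0$. Since $d\in W^\eta$, the map $u\mapsto du$ is an order embedding of $W_\eta$ into $W$ (induct on $\ell(u_0)$ using the lifting property). Therefore $b\le u_0$.

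\emph{Conclusion.} We get $x\lambda=v(y')^{-1}\lambda=v\,b^{-1}p^{-1}\lambda$ with $b^{-1}\le u_0^{-1}$, which is one of your vertices of $P^{w_L}_{\lambda_L}$. This is exactly your target $x^{-1}v\in W_\lambda\,p\,[e,u_0]$, and it makes your argument self-contained.
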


With all of this in hand, we can now frame our desired reduction rule for Demazure weight multiplicities. Fixing $\lambda \in X^\ast(T)^+$ and $w \in W$, let $\mu \in X^\ast(T)$ be such that $V^w_\lambda(\mu) \neq 0$, so that $\mu \in P^w_\lambda$. Suppose further that $\mu \in \mathcal{F}(v,\eta)$ for some dominant coweight $\eta \in X_\ast(T)^+$ and $v \in W^\eta$; in particular, $\mu \in P^{w_L}_{\lambda_L}$. Our primary result compares the weight multiplicity of $\mu$ in $V^w_\lambda$ and $V^{w_L}_{\lambda_L}$ in two stages, as follows.

%
%
%

\begin{theorem} \label{PreciseTheorem}

Retain all notation as in Proposition \ref{LeviModuleInclusion}, and let $\mu$ be a weight of $V^w_\lambda$ with $\mu \in \mathcal{F}(v, \eta)$. Then 

\begin{itemize}

\item[(1)]  $\dim V_{\lambda_L}^{w_L}(\mu) = \dim V_\lambda^q(\mu)$, where we view $V_{\lambda_L}^{w_L}$ as a Demazure module for $B_L$ and $V^q_\lambda$ as a Demazure submodule for $B$. 

\item[(2)] $\dim V_\lambda^q(\mu) = \dim V_\lambda^w(\mu)$, viewing both as Demazure modules for $B$. 

\end{itemize}

\end{theorem}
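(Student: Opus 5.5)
Part (1) and part (2) are handled separately.

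\textbf{Part (1).} Since $V^{w_L}_{\lambda_L}\subseteq V^q_\lambda$ by Proposition \ref{LeviModuleInclusion}, it suffices to show the reverse inequality $\dim V^q_\lambda(\mu)\le \dim V^{w_L}_{\lambda_L}(\mu)$ for $\mu\in\mathcal{F}(v,\eta)$. I would argue with the enveloping algebra description. We have $V^q_\lambda=U(\mathfrak{n})\cdot v_{q\lambda}$ with $\mathfrak{n}=\mathrm{Lie}(U)$, and $q\lambda$ lies on the face: indeed
\[
\langle q\lambda, v\eta\rangle=\langle \lambda,(w^{-1}\ast v)\eta\rangle=\langle\lambda,\pi^\eta(w^{-1}\ast v)\eta\rangle .
\]
Grade $\mathfrak{n}$ by the value of $\langle\cdot,v\eta\rangle$ on roots. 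The positive roots $\beta$ with $\langle\beta,v\eta\rangle=0$ are exactly the positive roots of $L$. The remaining positive roots have $\langle\beta,v\eta\rangle\neq 0$, and I claim they have $\langle \beta, v\eta\rangle>0$ only when they strictly raise the functional. More precisely, the face inequality $\langle\mu,v\eta\rangle\ge\langle q\lambda,v\eta\rangle$ holds on all of $P^w_\lambda$, and hence on all of $P^q_\lambda$. Now take a PBW-ordered monomial, with the $\mathfrak{n}_L$ factors written adjacent to $v_{q\lambda}$. If any factor $e_\beta$ has $\langle\beta,v\eta\rangle\neq0$, then the resulting weight differs from the face value. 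Since every weight lies in $P^q_\lambda$, which sits on one side of the hyperplane, all root vectors moving off the hyperplane move strictly into the interior side. A vector of weight $\mu$ on the face therefore cannot involve them, because the total shift is a sum of shifts of the same sign.

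So $V^q_\lambda(\mu)$ is spanned by $U(\mathfrak{n}_L)\cdot v_{q\lambda}$, which equals $V^{w_L}_{\lambda_L}$. The point needing care is that $U(\mathfrak{n})=U(\mathfrak{n}_{\neq 0})\otimes U(\mathfrak{n}_L)$ as vector spaces, and that each $\beta\in\Phi^+$ with $\langle\beta,v\eta\rangle\ne0$ shifts $\langle\cdot,v\eta\rangle$ in a single direction (the direction pointing into the polytope). Since $v\in W^\eta$, the sign of $\langle\beta, v\eta\rangle$ on positive roots is not constant, so I would instead follow the geometric route of Roth. That route restricts sections from $X_q$ to the $\eta$-fixed component $X_q^{v\eta}$, which is a Levi Schubert variety $L\dot q B/B$. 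One then shows the restriction is an isomorphism on $T$-invariants of the twisted bundle $\mathscr{L}_\lambda\otimes\mathbb{C}_{-\mu}$, using the Białynicki-Birula retraction $x\mapsto\lim_{t\to0}v\eta(t)x$. This retraction is defined on the attracting cell containing $\dot qB$, and $T$-invariant sections of weight on the face are constant along its fibers, because the $v\eta$-weight pairing is zero. Density of the attracting cell in $X_q$ then gives injectivity. This is the main obstacle, namely identifying the fixed component and checking that the attracting set of $X_q^{v\eta}$ is dense in $X_q$; the latter should follow from $q$ being extremal for $\langle\cdot,v\eta\rangle$ among $\{x\lambda: x\le q\}$.

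\textbf{Part (2).} I would induct along a chain from $q$ up to $w$, using Demazure operators. The relation $w^{-1}\ast v=q^{-1}v$ means we can write $w=s_{i_1}\cdots s_{i_k}\,q$ with lengths adding (up to Demazure products), where each $s_{i_j}$ is absorbed: for each step $x\mapsto s_ix>x$ we have $s_i\ast(x^{-1})^{-1}$-compatibility $(s_ix)^{-1}\ast v=x^{-1}\ast v$. At such a step, $\mathrm{ch}V^{s_ix}_\lambda=D_i(\mathrm{ch}V^x_\lambda)$. Consider the $\alpha_i$-string through $\mu$. Because the face value does not change from $x$ to $s_ix$ (the inequality constant is the same), $s_i$ preserves $\langle\cdot,v\eta\rangle$ along relevant strings, so $\langle\alpha_i,v\eta\rangle=0$ would force $s_i$-stability. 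Otherwise, $\mu$ is at the extreme end of its $\alpha_i$-string inside $P^{s_ix}_\lambda$. The formula $D_i$ alters multiplicities only by adding weights $\mu-k\alpha_i$ on the strictly smaller side of each string, and these cross the hyperplane. Hence multiplicities on the face agree, and iterating gives $\dim V^q_\lambda(\mu)=\dim V^w_\lambda(\mu)$.
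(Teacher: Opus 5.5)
\textbf{Part (2) has a genuine gap.} Your induction needs $w=s_{i_1}\cdots s_{i_k}q$ with lengths adding, or equivalently $w=x\ast q$ for some $x$, so that $\mathrm{ch}(V^w_\lambda)$ is reached from $\mathrm{ch}(V^q_\lambda)$ by Demazure operators. This fails in general. The element $q^{-1}$ is only a subword of a reduced word for $w^{-1}$, so $q\le w$ in Bruhat order, but $q$ need not be a left-weak-order suffix of $w$.

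Here is a counterexample. In type $A_2$ take $\eta=x_2$ (so $W_\eta=\{e,s_1\}$), $v=s_1s_2\in W^\eta$ and $w=s_2s_1$. Then $w^{-1}\ast v=s_1s_2\ast s_1s_2=w_0$, so $q^{-1}=w_0v^{-1}=s_2$ and $q=s_2\neq w$. Every $x\ast q$ has the form $x'q$ with $\ell(x'q)=\ell(x')+\ell(q)$, hence has $s_2$ as a right descent; $w=s_2s_1$ does not. So no sequence of operators $D_i$ carries $\mathrm{ch}(V^q_\lambda)$ to $\mathrm{ch}(V^w_\lambda)$, and your argument says nothing here even though $q\neq w$.

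The single step is also unjustified as written. $D_i$ leaves the coefficient of $e^\mu$ unchanged only if three things hold: $\mu$ is the top of its $\alpha_i$-string, $\langle\mu,\alpha_i^\vee\rangle\ge 0$, and the lowest weight of the string is $\succeq s_i\mu$. Otherwise the negative terms $-(e^{\nu+\alpha_i}+\cdots+e^{s_i\nu-\alpha_i})$, or $D_i(e^\nu)$ for some $\nu\succ\mu$ in the string, contribute to $e^\mu$. The first condition is guaranteed by $\langle\alpha_i,v\eta\rangle<0$. When $\langle\alpha_i,v\eta\rangle\ge 0$, which your chain cannot exclude, $\mu$ is in general not the top of its string. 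Your remark that $\langle\alpha_i,v\eta\rangle=0$ ``forces $s_i$-stability'' is not an argument.

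\textbf{How the paper avoids this.} It moves the face instead of the module. Choose $s_i$ with $s_iv<v$. Then $\langle\alpha_i,v\eta\rangle<0$ (Lemma \ref{ConvexLemma1}), so $\mu$ is the top of its $\alpha_i$-string, and by Lemma \ref{ConvexLemma3} the string stays above $s_i\mu$. This gives that the coefficient of $e^\mu$ in $\mathrm{ch}(V^w_\lambda)$ equals that of $e^{s_i\mu}$ in $\mathrm{ch}(V^{s_i\ast w}_\lambda)$, with $s_i\mu\in\mathcal{F}(s_iv,\eta)$. Iterating along $v$ compares $\mu$ in $V^w_\lambda$ with $v^{-1}\mu$ in $V^{v^{-1}\ast w}_\lambda$, and likewise for $q$. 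Since $v^{-1}\ast q=(q^{-1}\ast v)^{-1}=(w^{-1}\ast v)^{-1}=v^{-1}\ast w$, the two multiplicities agree.

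\textbf{Part (1).} By contrast, this is in substance the paper's argument. On $B\dot qB/B\cong\prod_{\beta\in R(q^{-1})}U_\beta$, the $v\eta$-retraction is projection onto the coordinates with $\beta\in\Phi_L$. Your ``constant along fibres'' is the paper's statement that the ideal generated by the remaining coordinates has no weight on the face. Injectivity together with Proposition \ref{LeviModuleInclusion} then suffices.

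Two corrections are needed:
\begin{itemize}
\item The fixed locus to restrict to is $\overline{B_L\dot qB/B}\cong X^L_{w_L}$, via $lB_L\mapsto l\dot zB$ with $z=v\,\pi^\eta(w^{-1}\ast v)^{-1}$, which pulls $\mathscr{L}_\lambda$ back to $\mathscr{L}_{\lambda_L}$. It is not $L\dot qB/B$, which is all of $L/B_L$.
\item The step you leave open, $\langle\beta,v\eta\rangle\ge 0$ for all $\beta\in R(q^{-1})$, is Lemma \ref{inversionsets}. Your extremality heuristic proves it only with a regular weight such as $\rho$, since for singular $\lambda$ one can have $s_\beta q\lambda=q\lambda$.
\end{itemize}
Once you have that fact, your abandoned enveloping-algebra argument also works. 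Indeed $V^q_\lambda=U(\mathfrak{n}_q)\,v_{q\lambda}$ with $\mathfrak{n}_q=\bigoplus_{\beta\in R(q^{-1})}\mathfrak{g}_\beta$, and every root appearing there pairs nonnegatively with $v\eta$.
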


We construct the proof of these two statements subsequently in the next two sections.



\begin{example}
Before proceeding with the proof, we fix an instructive example of Theorem \ref{PreciseTheorem}. Let $G$ be the simply-connected group of type $B_3$ ($=\op{Spin}(7)$) and $\lambda=\rho=\omega_1+\omega_2+\omega_3$. We set $w=s_1s_3s_2s_3s_1s_2s_3$ in the Weyl group, and consider the face $\mathcal{F}(s_1, x_1)$ of $P_\lambda^w$ where $x_1$ is the first fundamental coweight. Then one can check that $q=s_3s_2s_3s_1s_2s_3$. Figure \ref{example-figure} shows the polytopes $P_\lambda^q$ and $P_\lambda^w$ with their common face $\mathcal{F}(s_1,x_1)$ highlighted. Figure \ref{levi-figure} isolates this face as the Demazure weight polytope $P_{\lambda_L}^{w_L}$ for a Levi $L$ whose semisimple component is of type $B_2$. With respect to this Levi, the highest weight $\lambda_L$ and Weyl group element $w_L$ correspond to $2\omega^L_1+\omega^L_2$ and $t_2t_1t_2$ respectively, where $t_1=s_1s_2s_1$ and $t_2=s_3$ are the simple reflections in the Weyl group of $L$.  
\end{example}


\tdplotsetmaincoords{70}{35}
\begin{figure}
\begin{center}
\begin{tikzpicture}[tdplot_main_coords]

\coordinate(s3s2s3s1s2s3s2) at (-3/2, -1/2, -5/2);
\coordinate(s2s3s1s2s3s2) at (-3/2, -1/2, 5/2);
\coordinate(s3s1s2s3s1s2) at (-3/2, 1/2, -5/2);
\coordinate(s3s2s3s1s2s3) at (-1/2, -3/2, -5/2); 
\coordinate(s3s1s2s3s2) at (-3/2, 5/2, -1/2);
\coordinate(s1s2s3s1s2) at (-3/2, 1/2, 5/2);
\coordinate(s2s3s1s2s3) at (-1/2, -3/2, 5/2); 
\coordinate(s3s2s3s1s2) at (1/2, -3/2, -5/2); 
\coordinate(s3s1s2s3s1) at (-1/2, 3/2, -5/2);
\coordinate(s1s2s3s2) at (-3/2, 5/2, 1/2);
\coordinate(s3s2s3s2) at (5/2, -3/2, -1/2); 
\coordinate(s3s1s2s3) at (-1/2, 5/2, -3/2);
\coordinate(s2s3s1s2) at (1/2, -3/2, 5/2); 
\coordinate(s1s2s3s1) at (-1/2, 3/2, 5/2);
\coordinate(s3s1s2s1) at (1/2, 3/2, -5/2);
\coordinate(s3s2s3s1) at (3/2, -1/2, -5/2);
\coordinate(s2s3s2) at (5/2, -3/2, 1/2); 
\coordinate(s3s1s2) at (1/2, 5/2, -3/2);
\coordinate(s1s2s3) at (-1/2, 5/2, 3/2);
\coordinate(s3s2s3) at (5/2, -1/2, -3/2);
\coordinate(s1s2s1) at (1/2, 3/2, 5/2);
\coordinate(s2s3s1) at (3/2, -1/2, 5/2);
\coordinate(s3s2s1) at (3/2, 1/2, -5/2);
\coordinate(s3s2) at (5/2, 1/2, -3/2);
\coordinate(s2s3) at (5/2, -1/2, 3/2);
\coordinate(s1s2) at (1/2, 5/2, 3/2);
\coordinate(s3s1) at (3/2, 5/2, -1/2);
\coordinate(s2s1) at (3/2, 1/2, 5/2);
\coordinate(s2) at (5/2, 1/2, 3/2);
\coordinate(s3) at (5/2, 3/2, -1/2);
\coordinate(s1) at (3/2, 5/2, 1/2);
\coordinate(e) at (5/2, 3/2, 1/2);


\node at (s3s2s3s1s2s3s2) {\small $\bullet$};
\node at (s2s3s1s2s3s2) {\small $\bullet$};
\node at (s3s2s3s1s2s3) {\small $\bullet$};
\node at (s1s2s3s1s2) {\small $\bullet$};
\node at (s2s3s1s2s3) {\small $\bullet$};
\node at (s3s2s3s1s2) {\small $\bullet$};
\node at (s3s2s3s2) {\small $\bullet$};
\node at (s2s3s1s2) {\small $\bullet$};
\node at (s1s2s3s1) {\small $\bullet$};
\node at (s3s2s3s1) {\small $\bullet$};
\node at (s2s3s2) {\small $\bullet$};
\node at (s3s2s3) {\small $\bullet$};
\node at (s1s2s1) {\small $\bullet$};
\node at (s2s3s1) {\small $\bullet$};
\node at (s3s2s1) {\small $\bullet$};
\node at (s3s2) {\small $\bullet$};
\node at (s2s3) {\small $\bullet$};
\node at (s1s2) {\small $\bullet$};
\node at (s2s1) {\small $\bullet$};
\node at (s2) {\small $\bullet$};
\node at (s3) {\small $\bullet$};
\node at (s1) {\small $\bullet$};
\node at (e) {\small $\bullet$};


\draw[thick,fill=orange,opacity=0.5] (s3s2s3s1s2s3) -- (s3s2s3s1s2) -- (s3s2s3s2) -- (s2s3s2)
  -- (s2s3s1s2) -- (s2s3s1s2s3) -- (s3s2s3s1s2s3); 
  

\draw[thick,fill=lightgray,opacity=0.5] (s3s2s3s1) -- (s3s2s1) -- (s3s2) -- (s3s2s3) -- (s3s2s3s1);
\draw[thick,fill=lightgray,opacity=0.5] (s2s3s1s2) -- (s2s3s1) -- (s2s3) -- (s2s3s2) -- (s2s3s1s2);
\draw[thick,fill=lightgray,opacity=0.5] (s3s2s3s1s2) -- (s3s2s3s1) -- (s3s2s3) -- (s3s2s3s2) -- (s3s2s3s1s2);

\draw[thick,fill=lightgray,opacity=0.5]   (s1s2s3s1) -- (s1s2s1) -- (s2s1) -- (s2s3s1) -- (s2s3s1s2) -- (s2s3s1s2s3) ;
\draw[thick,dashed] (s2s3s1s2s3s2) -- (s1s2s3s1s2) -- (s1s2s3s1) -- (s1s2s1) -- (s2s1) -- (s2s3s1) -- (s2s3s1s2) -- (s2s3s1s2s3) -- (s2s3s1s2s3s2);

\draw[thick,dashed] (-3/2, 1/2, 5/2) -- (-3/2, 1/2, 1.85);
\draw[thick,dashed] (-3/2, -1/2, -5/2) -- (-3/2, -0.1, -5/2);

\draw[thick,fill=lightgray,opacity=0.5] (s2s3s1) -- (s2s1) -- (s2) -- (s2s3) -- (s2s3s1);

\draw[thick,dashed] (s3s2s3s1s2s3s2) -- (s2s3s1s2s3s2) -- (s2s3s1s2s3) -- (s3s2s3s1s2s3) -- (s3s2s3s1s2s3s2);

\draw[thick,fill=lightgray,opacity=0.5] (s1s2s1) -- (s1s2) -- (s1) -- (e) -- (s2) -- (s2s1) -- (s1s2s1);
\draw[thick,fill=lightgray,opacity=0.5] (s3s2s3s2) -- (s2s3s2) -- (s2s3) -- (s2) -- (e) -- (s3) -- (s3s2) -- (s3s2s3) -- (s3s2s3s2);

\node[right] at (e) {$\lambda$};

\node[below left] at (s3s2s3s1s2s3) {$q\lambda=w_L \lambda_L$};
\node[right] at (s2s3s2) {$\lambda_L$};


\coordinate(w1) at (-1/2, -3/2, 5/2);
\coordinate(w2) at (-1/2, -3/2, 3/2);
\coordinate(w3) at (-1/2, -3/2, 1/2);
\coordinate(w4) at (-1/2, -3/2, -1/2);
\coordinate(w5) at (-1/2, -3/2, -3/2);
\coordinate(w6) at (-1/2, -3/2, -5/2);
\coordinate(w7) at (1/2, -3/2, 5/2);  
\coordinate(w8) at (1/2, -3/2, 3/2);  
\coordinate(w9) at (1/2, -3/2, 1/2);  
\coordinate(w10) at (1/2, -3/2, -1/2);  
\coordinate(w11) at (1/2, -3/2, -3/2);  
\coordinate(w12) at (1/2, -3/2, -5/2);
\coordinate(w13) at (3/2, -3/2, 3/2);  
\coordinate(w14) at (3/2, -3/2, 1/2);  
\coordinate(w15) at (3/2, -3/2, -1/2);  
\coordinate(w16) at (3/2, -3/2, -3/2);
\coordinate(w17) at (5/2, -3/2, 1/2);
\coordinate(w18) at (5/2, -3/2, -1/2);

\node at (w1) {\small $\bullet$};
\node at (w2) {\small $\bullet$};
\node at (w3) {\small $\bullet$};
\node at (w4) {\small $\bullet$};
\node at (w5) {\small $\bullet$};
\node at (w6) {\small $\bullet$};
\node at (w7) {\small $\bullet$};
\node at (w12) {\small $\bullet$};
\node at (w13) {\small $\bullet$};
\node at (w16) {\small $\bullet$};
\node at (w17) {\small $\bullet$};
\node at (w18) {\small $\bullet$};

\filldraw (w8) circle[radius=2pt];
\draw (w8) circle[radius=4pt];

\filldraw (w9) circle[radius=2pt];
\draw (w9) circle[radius=4pt];

\filldraw (w10) circle[radius=2pt];
\draw (w10) circle[radius=4pt];

\filldraw (w11) circle[radius=2pt];
\draw (w11) circle[radius=4pt];

\filldraw (w14) circle[radius=2pt];
\draw (w14) circle[radius=4pt];

\filldraw (w15) circle[radius=2pt];
\draw (w15) circle[radius=4pt];

\end{tikzpicture}
\caption{\label{example-figure} Demazure polytopes for type $B_3$, highest weight $\lambda = \rho = \omega_1+\omega_2+\omega_3$. The shaded region is for $q = s_3s_2s_3s_1s_2s_3$; the wire frame extends to the polytope for $w =  s_1s_3s_2s_3s_1s_2s_3$. The face $\mathcal{F}(s_1,x_1)$ corresponding to $\eta = x_1, v = s_1$ is common to both polytopes and has been highlighted. The weight spaces on that face have been emphasized, where a single $\bullet$ stands for multiplicity one, and each concentric ring indicates an increase in multiplicity from there. }
\end{center}
\end{figure}
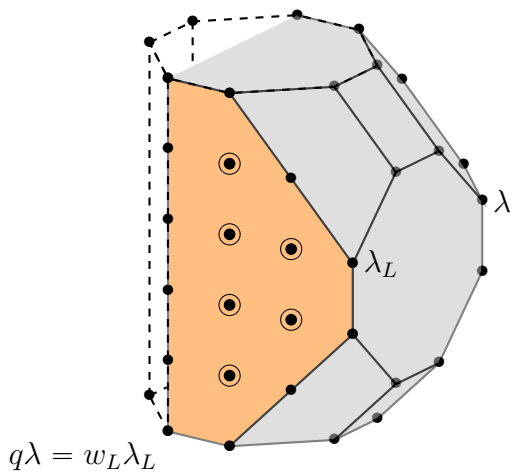


\begin{figure}
\begin{center}

\begin{tikzpicture}

\coordinate (w1) at (-1/2, 5/2);
\coordinate (w2) at (-1/2, 3/2);
\coordinate (w3) at (1/2, 5/2);
\coordinate (w4) at (-1/2, 1/2);
\coordinate (w5) at (1/2, 3/2);
\coordinate (w6) at (-1/2, -1/2);
\coordinate (w7) at (1/2, 1/2);
\coordinate (w8) at (3/2, 3/2);
\coordinate (w9) at (-1/2, -3/2);
\coordinate (w10) at (1/2, -1/2);
\coordinate (w11) at (3/2, 1/2);
\coordinate (w12) at (-1/2, -5/2);
\coordinate (w13) at (1/2, -3/2);
\coordinate (w14) at (3/2, -1/2);
\coordinate (w15) at (5/2, 1/2);
\coordinate (w16) at (1/2, -5/2);
\coordinate (w17) at (3/2, -3/2);
\coordinate (w18) at (5/2, -1/2);

\draw[thick,fill=orange,opacity=0.5] (w1) -- (w3) -- (w15) -- (w18) -- (w16) -- (w12) -- (w1) ; 

\filldraw (w1) circle[radius=2pt];
\filldraw (w2) circle[radius=2pt];
\filldraw (w3) circle[radius=2pt];
\filldraw (w4) circle[radius=2pt];

\filldraw (w5) circle[radius=2pt];
\draw (w5) circle[radius=4pt];

\filldraw (w6) circle[radius=2pt];

\filldraw (w7) circle[radius=2pt];
\draw (w7) circle[radius=4pt];

\filldraw (w8) circle[radius=2pt];
\filldraw (w9) circle[radius=2pt];

\filldraw (w10) circle[radius=2pt];
\draw (w10) circle[radius=4pt];

\filldraw (w11) circle[radius=2pt];
\draw (w11) circle[radius=4pt];

\filldraw (w12) circle[radius=2pt];
\node[below left] at (w12) {$s_2s_1s_2(2\omega_1+\omega_2)$};

\filldraw (w13) circle[radius=2pt];
\draw (w13) circle[radius=4pt];

\filldraw (w14) circle[radius=2pt];
\draw (w14) circle[radius=4pt];

\filldraw (w15) circle[radius=2pt];
\node[right] at (w15) {$2\omega_1+\omega_2$};

\filldraw (w16) circle[radius=2pt];
\filldraw (w17) circle[radius=2pt];
\filldraw (w18) circle[radius=2pt];

\end{tikzpicture}
\caption{\label{levi-figure} Demazure module of type $B_2$ with highest weight $2\omega_1 + \omega_2$ and Weyl group element $s_2s_1s_2$. }
\end{center}
\end{figure}
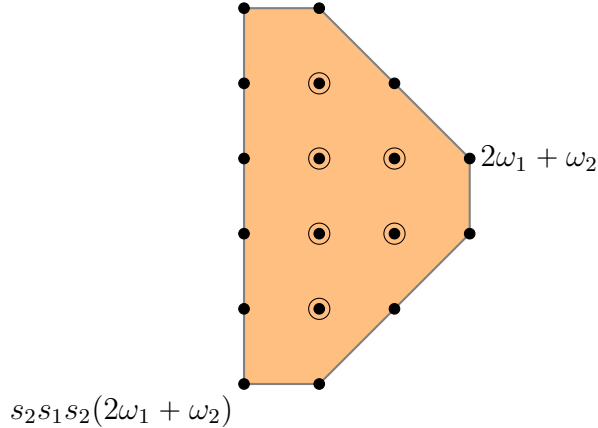

\section{Proof of Theorem \ref{PreciseTheorem}(1)}

The proof of Theorem \ref{PreciseTheorem}(1) given in this section is greatly influenced by Roth \cite{Roth},  as will be clear to any informed reader. We state, where possible, intermediate results in greater generality than necessary; this is in part to lessen the notational burden and will be connected precisely back to the setting at hand.

\subsection{Embeddings of Schubert varieties} Let $P$ be a (not necessarily standard) parabolic in $G$ with Levi $L$ such that our fixed maximal torus $T \subset G$ is a maximal torus of $L$. We then have an inclusion 

\[ W_L \hookrightarrow W\]
realizing $W_L$ as a reflection subgroup of $W$. We have the flag variety $L/B_L$, and for each element $y \in W_L$, we have the $L$-Schubert varieties $X^L_y$ and $L$-Schubert cells $\mathring{X}^L_y$. 

We first study a family of closed immersions 

\[i_{L,z}: L/B_L \rightarrow G/B\]

associated to certain elements $z \in W$.

\begin{lemma}
	Let $L/B_L$ be as above. Consider $z \in W$ such that $\dot{z}^{-1}B_L \dot{z} \subseteq B$ for any lift $\dot{z} \in L \subseteq G$. Then the map \[i_{L,z}: L/B_L \rightarrow G/B\] defined by \[lB_L\mapsto lzB\] is a closed immersion which maps $L-$Schubert cells $\mathring{X}^L_y$ to closed linear subschemes of $\mathring{X}_{yz}=\mathring{X}^G_{yz}$. In fact, $i_{L,z}$ induces an isomorphism $\mathring{X}^L_y \cong B_L yz B/ B =: \mathring{Y}^L_{yz}$. 
\end{lemma}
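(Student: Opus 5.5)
First I check that $i_{L,z}$ is well defined. The lift $\dot z$ is taken in $N_G(T)$; it need not lie in $L$. If $l' = lb$ with $b \in B_L$, then $l'\dot z B = l\dot z(\dot z^{-1} b \dot z)B = l\dot z B$, because $\dot z^{-1}B_L\dot z \subseteq B$ by hypothesis. A different lift of $z$ differs by an element of $T \subseteq B$, so the map does not depend on the lift. It is visibly $L$-equivariant.

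To see that $i_{L,z}$ is a closed immersion, I factor it as
\[ L/B_L \longrightarrow L/(L\cap \dot z B\dot z^{-1}) \xrightarrow{\ \sim\ } L\dot zB/B \subseteq G/B, \]
using the orbit map $l \mapsto l\dot zB$. The stabilizer of $\dot zB$ in $L$ is $L \cap \dot zB\dot z^{-1}$. This group contains $B_L$ by hypothesis, and it is solvable, so it equals $B_L$, since $B_L$ is a maximal connected solvable subgroup and the stabilizer is connected (it is an intersection containing $T$ and generated by root subgroups of $L$). Hence the orbit map identifies $L/B_L$ with the $L$-orbit of $\dot zB$. This orbit is the image of a projective variety, so it is closed. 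Orbit maps over $\mathbb{C}$ are separable, so $L/B_L$ is isomorphic onto the orbit, and $i_{L,z}$ is a closed immersion.

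Next I identify the image of the Schubert cells. The cell $\mathring{X}^L_y = B_L\dot yB_L/B_L$ maps onto $B_L\dot y\dot zB/B = \mathring{Y}^L_{yz}$, and this set lies inside $B\dot y\dot zB/B = \mathring X_{yz}$ because $B_L \subseteq B$. Since $i_{L,z}$ is a closed immersion, it restricts to an isomorphism of the locally closed cell onto its image, giving $\mathring{X}^L_y \cong \mathring Y^L_{yz}$.

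For linearity, I write $\mathring{X}^L_y \cong U_{L,y}\dot y$, where $U_{L,y}$ is the product of the root subgroups $U_\beta$ with $\beta \in \Phi_L^+$ and $y^{-1}\beta<0$. Its image is then $U_{L,y}\dot y\dot zB/B$. I also use $\mathring X_{yz} = U_{yz}\dot y\dot zB/B \cong \mathbb{A}^{\ell(yz)}$, with $U_{yz}$ the product of the $U_\gamma$ with $\gamma>0$ and $(yz)^{-1}\gamma<0$, where the coordinates are the root coordinates. The key check is that each $\beta$ indexing $U_{L,y}$ also indexes $U_{yz}$. We have $(yz)^{-1}\beta = z^{-1}(y^{-1}\beta)$, and $y^{-1}\beta$ is a negative root of $L$. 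The hypothesis $\dot z^{-1}B_L\dot z\subseteq B$ says that $z^{-1}$ sends $\Phi_L^+$ into $\Phi^+$, and hence sends $\Phi_L^-$ into $\Phi^-$. So $(yz)^{-1}\beta<0$, as needed.

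Order the product decomposition of $U_{yz}$ so that the roots of $U_{L,y}$ come first; products of root subgroups in any order give an isomorphism of varieties. Then the image is exactly the coordinate subspace where the remaining coordinates vanish. So the image is a closed linear subscheme, and the inclusion $\mathring Y^L_{yz} \subseteq \mathring X_{yz}$ is compatible with these coordinates.

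The step I expect to be the main obstacle is this last compatibility of coordinates. The issue is that $U_{L,y}\dot y\dot z B$ might a priori not be a coordinate subspace if commutator terms intervene. Ordering the roots of $U_{L,y}$ first and using the uniqueness of the factorization in $U_{yz}$ handles this.
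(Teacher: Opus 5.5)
Your argument is correct, but it proves the global claim by a different route from the paper. The paper checks well-definedness as you do and then argues cell by cell. It asserts that each $B_LyB_L/B_L$ maps isomorphically onto $B_LyzB/B$, with pairwise disjoint images. Its only real input is the inversion-set containment ``$\gamma\in R(y^{-1})\Rightarrow\gamma\in R(z^{-1}y^{-1})$''. That is the same fact you use when you show that $z^{-1}$ sends $\Phi_L^-$ into $\Phi^-$.

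You instead prove the closed immersion globally, in three steps:
\begin{itemize}
\item the stabilizer $L\cap\dot zB\dot z^{-1}$ equals $B_L$;
\item the orbit $L\dot zB/B$ is closed because $L/B_L$ is complete;
\item orbit maps are separable in characteristic $0$.
\end{itemize}
The cell statements then follow by restriction. The root-sign check is needed only to see that $U_{L,y}$ is a coordinate subspace of $U_{yz}$.

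What your route buys is an actual proof of the global closed-immersion claim, which the paper's cellwise argument leaves implicit. A bijective morphism that is an isomorphism on each stratum need not be a closed immersion; the normalization of a cuspidal cubic is an example. It is the $L$-equivariance and separability in your argument that settle this. The paper's route is shorter, and it is phrased directly in the coordinates used later for the ideal of $B_LqB/B$. You are also right that $\dot z$ should be taken in $N_G(T)$ rather than in $L$. In your last step any ordering of $R((yz)^{-1})$ would work, by the same uniqueness-of-factorization argument.

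One small repair is needed. Your parenthetical reason for the connectedness of $L\cap\dot zB\dot z^{-1}$ does not work as stated. Intersections of connected groups need not be connected, and not every closed $T$-stable subgroup is generated by $T$ and root subgroups (e.g.\ $N_L(T)$). Either of the following fixes it:
\begin{itemize}
\item This group contains the Borel subgroup $B_L$ of $L$, so it is a parabolic subgroup of $L$ and hence connected.
\item Its identity component is connected, solvable and contains $B_L$, so it equals $B_L$; the whole stabilizer then normalizes $B_L$ and lies in $N_L(B_L)=B_L$.
\end{itemize}
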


\begin{proof}

The algebraic map $L\to G$ given by $l \mapsto lz$ induces a map $L/B_L \to G/B$: indeed, if $l' = lb$ for $b \in B_L$, then $l'z = lbz = lz z^{-1} b z$, and $z^{-1}bz \in B$ by assumption. 

The restriction of this map to each Schubert cell $B_L y B_L/B_L$ induces closed immersions $B_L y B_L/B_L \to B_L y z B/B$ with disjoint images (injectivity follows from the fact that if $\gamma  \in R(y^{-1})$ then $\gamma \in R(z^{-1}y^{-1})$ by assumption on $z$).

Therefore $i_{L,z}$ induces an isomorphism of $B_L y B_L/B_L$ onto $B_L y z B/B$. 
\end{proof}

As an immediate corollary, we have the following:

\begin{corollary}
	Let $x$ be an arbitrary element of $W$ and let $L$, $B_L$ as above. Consider the Zariski closure \[Y^L_x:=\overline{B_L x B/B}\] of a $B_L$ orbit through $xB/B$. Then there exists an $L$-Schubert variety $X_{y}^L$ and a $z \in W$ such that \[Y^L_x= i_{L,z}(X^L_{y}) \simeq X^L_{y}.\]
\end{corollary}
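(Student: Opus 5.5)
The plan is to reduce to the preceding lemma by choosing, inside the coset $xW_L$, the representative that makes the conjugation condition hold. Write $x = y z$ with $y \in W_L$ and $z$ the unique element of the coset $W_L x$ satisfying $z^{-1}(\Phi_L^+) \subseteq \Phi^+$. Here $\Phi_L^+$ is the positive system of $L$ determined by $B_L$, and we use the standard fact that each right coset of the reflection subgroup $W_L$ contains a unique element of this kind: the element sending $\Phi_L^+$-positivity compatibly, namely the minimal element of $W_L x$ with respect to the reflection-subgroup length. Given this, $\dot z^{-1} B_L \dot z$ is generated by $T$ and by root subgroups $U_{z^{-1}\gamma}$ for $\gamma \in \Phi_L^+$, all of which lie in $B$. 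So the hypothesis of the preceding lemma holds for $z$.

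Next we identify the orbit. Since $x = yz$,
\[
B_L x B/B = B_L y z B/B = i_{L,z}(B_L y B_L/B_L) = i_{L,z}(\mathring X^L_y),
\]
using only the definition $i_{L,z}(lB_L) = lzB$. The lemma then gives that this is $\mathring Y^L_{yz}$, isomorphic to $\mathring X^L_y$.

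To pass to closures, note that $i_{L,z}$ is a closed immersion of the projective variety $L/B_L$, so its image is closed. It follows that $i_{L,z}(\overline{\mathring X^L_y}) = \overline{i_{L,z}(\mathring X^L_y)}$: a continuous map from a projective variety is closed, so the image of the closure is closed and contains the image of the dense cell, while continuity gives the reverse inclusion. Hence
\[
Y^L_x = i_{L,z}(X^L_y) \simeq X^L_y,
\]
where the isomorphism holds because closed immersions restrict to isomorphisms onto their images on closed subschemes (taking reduced structures).

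The main obstacle is the first step, the existence of $z$ with $z^{-1}\Phi_L^+ \subseteq \Phi^+$ when $L$ is not standard. To handle it, write $L = \dot v L_\eta \dot v^{-1}$ as in the setting of interest, or more generally note that $\Phi_L^+ = \Phi_L \cap \Phi^+$ when $B_L = B \cap L$. Then apply the standard minimal coset representative theory for the reflection subgroup $W_L$ acting on the left (Dyer's theory): among the elements $u x$ with $u \in W_L$, choose $u$ so that $u x$ has no inversions in $x^{-1}u^{-1}\Phi_L^+$. Concretely, start from any element of the coset and, while some simple root $\beta$ of $\Phi_L^+$ satisfies $z^{-1}\beta < 0$, replace $z$ by $s_\beta z$. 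This strictly decreases $|\{\gamma \in \Phi_L^+ : z^{-1}\gamma < 0\}|$, so the process terminates.
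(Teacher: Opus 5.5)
Your proof is correct and follows the paper's argument: your $z$ is exactly the paper's $(\pi^L(x^{-1}))^{-1}$ and your $y$ is $(\pi_L(x^{-1}))^{-1}$, so $x=yz$ with $z^{-1}\Phi_L^+\subseteq\Phi^+$ and the preceding lemma applies. You also justify two things the paper leaves implicit. The first is the existence of this coset representative when $L$ is a non-standard Levi, via the reduction $z\mapsto s_\beta z$, which does decrease the count by exactly one. The second is the passage from the cell $B_LxB/B$ to its closure.
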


\begin{proof}
Write $x^{-1}=\pi^L(x^{-1})\pi_L(x^{-1}) \in W^L \cdot W_L$, and set $y:=(\pi_L(x^{-1}))^{-1} \in W_L$ and $z:=(\pi^L(x^{-1}))^{-1}\in W$. Then indeed $z^{-1}$ satisfies $\dot{z}^{-1}B_L \dot{z} \subseteq B$, and $yz=x$ as desired. 
\end{proof} 

\subsection{Action on the Picard group}

The map $i_{L,z}: L/B_L \to G/B$ induces a map $i_{L,z}^*$ on the Picard groups. We wish to identify the image $i_{L,z}^*(\mathscr{L}_\lambda)$ starting with $\mathscr{L}_\lambda$ on $G/B$. 

Recall the fibre-bundle construction of the total space of $\mathscr{L}_\lambda$: 
$$G \times_B \C_{-\lambda} $$
where 
$(g,t) \sim (gb, \lambda(b) t)$ for all $b\in B$. 

\begin{proposition} \label{pullback}
We can identify pullbacks of line bundles as follows:
$i_{L,z}^*(\mathscr{L}_\lambda) = \mathscr{L}_{z\lambda}^L$. 
\end{proposition}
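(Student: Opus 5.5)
We compute the pullback directly on total spaces. The map $i_{L,z}$ is induced by right multiplication $L \to G$, $l \mapsto l\dot z$, where $\dot z\in N_G(T)$ is a fixed lift of $z$. We lift it to the total spaces by
$$\tilde\imath: L\times_{B_L}\C_{-z\lambda} \longrightarrow G\times_B \C_{-\lambda}, \qquad [l,t]\mapsto [l\dot z, t].$$
Once this is shown to be a well-defined map of line bundles covering $i_{L,z}$ that is linear and bijective on fibres, it identifies $L\times_{B_L}\C_{-z\lambda}$ with the fibre product $(L/B_L)\times_{G/B}(G\times_B\C_{-\lambda})$. That fibre product is the total space of $i_{L,z}^*\mathscr{L}_\lambda$, which gives the proposition.

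\textbf{Well-definedness.} Take $b\in B_L$. In $L\times_{B_L}\C_{-z\lambda}$ we have $(lb,t)\sim(l,(z\lambda)(b)\,t)$, so we must check $[lb\dot z,t]=[l\dot z,(z\lambda)(b)\,t]$ in $G\times_B\C_{-\lambda}$. Write $lb\dot z = l\dot z\cdot(\dot z^{-1}b\dot z)$. By the hypothesis $\dot z^{-1}B_L\dot z\subseteq B$, the element $b':=\dot z^{-1}b\dot z$ lies in $B$. The relation in $G\times_B\C_{-\lambda}$ then gives $[l\dot z b',t]=[l\dot z,\lambda(b')\,t]$.

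It remains to see that $\lambda(b')=(z\lambda)(b)$. Characters of $B$ factor through $B\to T$, and conjugation by $\dot z$ preserves $T$ and acts on it by $z$. Hence, writing $b=tu$ with $t\in T$ and $u$ unipotent, we get $\lambda(\dot z^{-1}tu\dot z)=\lambda(z^{-1}\cdot t)=(z\lambda)(t)=(z\lambda)(b)$.

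\textbf{Expected obstacle.} I expect the main difficulty to be in this last step, namely in making sure the signs and conventions are consistent. There are three places where a convention enters. First, $W$ acts on characters by $(z\lambda)(t)=\lambda(\dot z^{-1}t\dot z)$. Second, the bundle $\mathscr{L}_\lambda$ is built from $\C_{-\lambda}$. Third, the equivalence relation on the total space is $(g,t)\sim(gb,\lambda(b)t)$, exactly as written in the paper. I would fix all three in the form just stated and then verify directly that the computation above produces $z\lambda$, and not $z^{-1}\lambda$ or $-z\lambda$.

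\textbf{Bijectivity on fibres.} The map $\tilde\imath$ is visibly linear on fibres, since it is the identity on the $t$-coordinate. Moreover, over the point $lB_L$ it sends the fibre class $[l,t]$ to $[l\dot z,t]$, which is nonzero whenever $t\neq 0$, so it is an isomorphism of one-dimensional fibres.

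A line-bundle morphism covering $i_{L,z}$ that is an isomorphism on each fibre identifies the source with the pullback. This gives $i_{L,z}^*(\mathscr{L}_\lambda)\cong\mathscr{L}^L_{z\lambda}$.

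Finally, the map $\tilde\imath$ is $T$-equivariant (indeed $L$-equivariant) for left multiplication. So the identification also respects the natural linearizations, which is what the later weight-space arguments will use.
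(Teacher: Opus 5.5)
Your proof is correct and is essentially the paper's argument. Both use the same map $[l,t]\mapsto[l\dot z,t]$ on total spaces and reduce well-definedness to $\lambda(\dot z^{-1}b\dot z)=(z\lambda)(b)$ for $b\in B_L$; your fibrewise-isomorphism check only makes explicit the identification with the pullback that the paper asserts directly. One small slip, in exactly the place you flagged: under the convention $(g,t)\sim(gb,\lambda(b)t)$, the relations actually read $(lb,t)\sim(l,(z\lambda)(b)^{-1}t)$ and $[l\dot z b',t]=[l\dot z,\lambda(b')^{-1}t]$. Since the inverse enters both sides identically, the verification goes through unchanged.
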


\begin{proof}
Define a map 
$$ L \times_{B_L} \C_{-z\lambda} \to G \times_B \C_{-\lambda}$$
given on coset representatives by 
$$
(l, t) \mapsto (lz, t).
$$

This is well-defined due to
$
(lb, (z\lambda)(b) t) \mapsto (lbz, (z\lambda)(b) t)$, the latter equal to $$ (lz z^{-1}bz, (z\lambda)(b) t) = (lz, \lambda(z^{-1} b z)^{-1} (z\lambda)(b) t ) = (lz, t).
$$

This identifies the total space of the pull-back $i_{L,z}^*(\mathscr{L}_\lambda)$ as $L \times_{B_L} \C_{-z\lambda} $. 
\end{proof}

Let $\mathscr{I}_{L,z}$ denote the ideal sheaf of $i_{L,z}(L/B_L)$ in $G/B$. Tensoring with $\mathscr{L}_{\lambda}$ we have the standard short exact sequence of sheaves on $G/B$:

\[0 \rightarrow \mathscr{I}_{L,z} \otimes \mathscr{L}_{\lambda} \rightarrow \mathscr{L}_{\lambda} \rightarrow (i_{L,z})_*\mathscr{O}_{L/B_L} \otimes \mathscr{L}_{\lambda} \rightarrow 0.\]

Upon taking cohomology we have the following 

\begin{proposition}
We have a short exact sequence 

\[0 \rightarrow H^0(G/B, \mathscr{I}_{L,z} \otimes \mathscr{L}_{\lambda}) \rightarrow H^0(G/B, \mathscr{L}_{\lambda}) \rightarrow H^0(L/B_L, i_{L,z}^*\mathscr{L}_{\lambda}) \rightarrow 0.\]
\end{proposition}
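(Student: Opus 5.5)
The short exact sequence of sheaves already gives left exactness on global sections, so the only content is surjectivity of the restriction map
\[
H^0(G/B, \mathscr{L}_\lambda) \longrightarrow H^0(L/B_L, i_{L,z}^*\mathscr{L}_\lambda).
\]
Throughout, $\lambda$ is dominant; this is the setting in which the proposition is applied. By Proposition \ref{pullback}, the target is $H^0(L/B_L, \mathscr{L}^L_{z\lambda})$. Surjectivity would follow from $H^1(G/B, \mathscr{I}_{L,z}\otimes\mathscr{L}_\lambda)=0$, but that is harder to access directly. Instead I will argue representation-theoretically, dualizing and using the Borel--Weil theorem on both sides.

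First, I will show that the restriction map is $L'$-equivariant, where $L' := \dot z^{-1}L\dot z$ acts on $L/B_L$ through the conjugation identification. Equivalently, I will transport the problem: the map $lB_L\mapsto l\dot zB$ is $L$-equivariant for left multiplication on both sides, and $G/B$ carries its $G$-linearization. Hence the restriction is an $L$-module map $H^0(G/B,\mathscr{L}_\lambda)\to H^0(L/B_L,\mathscr{L}^L_{z\lambda})$, and the identification in Proposition \ref{pullback} is compatible with the $L$-actions.

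If $z\lambda$ is not $B_L$-dominant, the target vanishes and there is nothing to prove. The hypothesis $\dot z^{-1}B_L\dot z\subseteq B$ means $z^{-1}$ sends the positive roots of $L$ into $\Phi^+$. So for a positive root $\beta$ of $L$,
\[
\langle z\lambda,\beta^\vee\rangle=\langle\lambda,(z^{-1}\beta)^\vee\rangle\ge 0,
\]
which shows $z\lambda$ is always $L$-dominant. By Borel--Weil for $L$, the target is then the irreducible module $V^L_{z\lambda}{}^*$, and in particular it is nonzero.

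An $L$-module map into an irreducible module is surjective as soon as it is nonzero. To see that it is nonzero, I will evaluate at the point $\dot zB=i_{L,z}(eB_L)$. Every point of $G/B$ has a nonvanishing section of $\mathscr{L}_\lambda$: the sections generate, since $\mathscr{L}_\lambda$ is globally generated for $\lambda$ dominant. A section not vanishing at $\dot zB$ therefore restricts to a nonzero section on $L/B_L$. Hence the restriction map is surjective.

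The step I expect to need care is the $L$-equivariance bookkeeping: one has to check that the linearization induced on $i_{L,z}^*\mathscr{L}_\lambda$ by the $G$-linearization agrees with the canonical $L$-linearization of $\mathscr{L}^L_{z\lambda}$ under the explicit map $(l,t)\mapsto(lz,t)$ of Proposition \ref{pullback}. This is immediate from the formula, since left multiplication by $L$ commutes with right multiplication by $z$. Together with exactness of global sections on the left, this gives the stated short exact sequence.
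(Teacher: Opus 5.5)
Your proof is correct and follows the paper's argument: the restriction map is $L$-equivariant, its target $H^0(L/B_L,\mathscr{L}^L_{z\lambda})$ is nonzero and irreducible by Borel--Weil because $z\lambda$ is $B_L$-dominant, and a nonzero $L$-map into an irreducible module is surjective. The one difference is how you show the map is nonzero: you use a section of $\mathscr{L}_\lambda$ that does not vanish at $\dot zB$ (which exists by global generation, or simply by $G$-transitivity), and this justifies the point more cleanly than the paper's remark that extremal weights lie in the image. Your opening aside about $\dot z^{-1}L\dot z$ is muddled but harmless, since the left-multiplication $L$-equivariance you actually use is the right statement.
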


\begin{proof}
First, note that $z\lambda$ is a dominant weight for $L$ with respect to $B_L$, as $\lambda$ is dominant for $G$ with respect to $B$, $(z\lambda)(b)=\lambda(z^{-1}bz)$ for any $b \in B_L$, and $z^{-1}bz \in B$ by assumption on $z$. Thus $H^0(L/B_L, i^\ast_{L,z} \mathscr{L}_\lambda) \neq 0$. 

The only other point which needs justification is the surjection. We may regard the morphism $H^0(G/B, \mathscr{L}_{\lambda}) \rightarrow H^0(L/B_L, i^*_{L,z} \mathscr{L}_{\lambda} )$ as a morphism of $L$-modules, since $\mathscr{L}_{\lambda}$ and $i^*_{L,z} \mathscr{L}_{\lambda}$ carry compatible $L$-equivariant structures. Since the extremal weights (e.g. the lowest weight) of $H^0(L/B_L, i^*_{L,z} \mathscr{L}_{\lambda})$ are in the image of the restriction map, we have a nonzero map of $L$-modules. Moreover, by the Borel-Weil theorem, $H^0(L/B_L, i^*_{L,z} \mathscr{L}_{\lambda})$ is an irreducible $L$-module, thus the restriction map must be surjective. 
\end{proof}

We thus have the following commutative diagram 

\begin{center}
\begin{tikzcd}
H^0(G/B, \mathscr{L}_{\lambda}) \arrow[r] \arrow[d] & H^0(L/B_L, i^*_{L,z} \mathscr{L}_{\lambda}) \arrow[d] \\
H^0(X_{yz}, \mathscr{L}_{\lambda}) \arrow[r] & H^0(X_{y}^L, i^*_{L,z} \mathscr{L}_{\lambda} ).\\
\end{tikzcd}
\end{center}

\begin{corollary} \label{Surj}
All maps in the above commutative diagram are surjections.
\end{corollary}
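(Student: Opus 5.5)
We need to show that all four maps in the square are surjective. The top horizontal map is surjective by the preceding proposition. The left vertical map is the restriction $H^0(G/B,\mathscr{L}_\lambda)\to H^0(X_{yz},\mathscr{L}_\lambda)$ to a Schubert variety. It is surjective by the classical result that Schubert varieties are projectively normal and, more precisely, that restriction of sections from $G/B$ to $X_{yz}$ is surjective for dominant $\lambda$. We can take this from Frobenius splitting, as in \cite{KumarBook}. Dually, it is the inclusion $V_\lambda^{yz}\hookrightarrow V_\lambda$. The right vertical map is the analogous restriction $H^0(L/B_L,\mathscr{L}^L_{z\lambda})\to H^0(X^L_y,\mathscr{L}^L_{z\lambda})$. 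Here we use Proposition \ref{pullback} to identify $i_{L,z}^*\mathscr{L}_\lambda$ with $\mathscr{L}^L_{z\lambda}$. As shown in the proof of the preceding proposition, $z\lambda$ is $B_L$-dominant, so the same Schubert-variety surjectivity theorem applied to the reductive group $L$ gives surjectivity.

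The main obstacle is the bottom horizontal map $H^0(X_{yz},\mathscr{L}_\lambda)\to H^0(X^L_y,i^*_{L,z}\mathscr{L}_\lambda)$. Here $X^L_y$ is embedded via $i_{L,z}$ as $Y^L_{yz}=\overline{B_LyzB/B}\subseteq X_{yz}$, and the map is restriction along this closed immersion. We do not need to study this restriction directly, because commutativity of the square forces surjectivity. The composite through the top right corner, top map followed by right map, is a composite of surjections and hence surjective. By commutativity it equals the composite through the bottom left, left map followed by bottom map. A surjective composite $g\circ f$ forces $g$ to be surjective, so the bottom map is surjective.

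It remains to check that the square commutes and that its bottom arrow is well defined. Both composites are restriction of sections of $\mathscr{L}_\lambda$ along the same morphism $X^L_y\to G/B$. One route is the inclusion $X^L_y\subseteq L/B_L\xrightarrow{i_{L,z}}G/B$. The other is $i_{L,z}|_{X^L_y}$ landing in $X_{yz}\subseteq G/B$. These agree as morphisms, since $i_{L,z}$ carries $\mathring{X}^L_{y'}$ into $\mathring{X}_{y'z}$ for $y'\le y$ in $W_L$, and hence carries the closure $X^L_y$ into $\overline{\mathring{X}_{yz}}=X_{yz}$. Restriction of sections is functorial for the pullback of line bundles, so the two composites coincide. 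This completes the plan: three maps are surjective by Borel–Weil and Schubert normality, and the fourth follows formally from the diagram.
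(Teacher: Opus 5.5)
Your proposal is correct and follows the paper's argument: the vertical restrictions are surjective by the classical result for Schubert varieties (for $G$, and for $L$ using the $B_L$-dominant weight $z\lambda$), the top arrow is surjective by the preceding proposition, and commutativity then forces the bottom arrow to be surjective, since the composite through the upper-right corner is surjective. You also verify that $i_{L,z}$ maps $X^L_y$ into $X_{yz}$, which follows by continuity from $i_{L,z}(\mathring{X}^L_y)\subseteq \mathring{X}_{yz}$ and which the paper leaves implicit; note that the bottom arrow needs only the top and right arrows to be surjective, not the left one.
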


\begin{proof}
That the vertical arrows are surjections is a classical result on Schubert varieties. The top arrow is a surjection by the previous lemma, and this forces the bottom arrow to be a surjection as well.
\end{proof}

\subsection{Isomorphisms of T-equivariants}
Let $M$ be a representation of $T$. We write $M^T$ for the $T$-invariants. More generally, for $\xi$ a character of $T$, we write $M^{T, \xi}$ for the $\xi$-isotypic component of $M$. The following lemma is elementary.

\begin{lemma} \label{InvExact}
The functor $M \mapsto M^T$ is exact for $M$ in the category of rational $T$- modules. The functor $M \mapsto M^{T, \xi}$ is also exact for any character $\xi$; taking isotypic components is also exact on the category of rational $T$-modules.
\end{lemma}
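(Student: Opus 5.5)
The statement concerns rational $T$-modules, i.e. $T$-modules that are direct sums of their weight spaces. The plan is to write each module as $M = \bigoplus_{\chi \in X^*(T)} M^{T,\chi}$, with $M^{T,\chi} = \{m \in M : t.m = \chi(t)m \ \forall t\in T\}$. I will use the standard fact that this decomposition exists for every rational $T$-module, because $T$ is diagonalizable and every rational $T$-module is a union of finite-dimensional submodules. I will also use that it is functorial: any $T$-equivariant map $f: M \to N$ satisfies $f(M^{T,\chi}) \subseteq N^{T,\chi}$. Hence $f$ is the direct sum of its restrictions $f^\chi : M^{T,\chi} \to N^{T,\chi}$.

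First, left exactness needs no semisimplicity. The functor $M \mapsto M^{T,\xi}$ is isomorphic to $\mathrm{Hom}_T(\mathbb{C}_\xi, M)$, and Hom out of a fixed object is left exact. More concretely, if $0 \to M' \to M \to M''$ is exact, then $\ker(f^\xi) = \ker(f) \cap M^{T,\xi}$. This is the image of $M'$ intersected with $M^{T,\xi}$, which is the image of $(M')^{T,\xi}$ because the inclusion is equivariant and injective.

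The only real point is right exactness. Let $f: M \to M''$ be a surjective map of rational $T$-modules and take $m'' \in (M'')^{T,\xi}$. Choose a preimage $m \in M$ and decompose $m = \sum_\chi m_\chi$ by weights; the sum is finite since $m$ lies in a finite-dimensional submodule. Then $f(m) = \sum_\chi f(m_\chi)$ with $f(m_\chi) \in (M'')^{T,\chi}$. By uniqueness of the weight decomposition in $M''$, we get $f(m_\xi) = m''$ and $f(m_\chi)=0$ for $\chi \neq \xi$. So $m_\xi$ is a preimage of $m''$ lying in $M^{T,\xi}$, and $f^\xi$ is surjective.

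Exactness in the middle follows in the same way. If $f(m)=0$ for some $m\in M^{T,\xi}$ and the sequence $M' \xrightarrow{g} M \xrightarrow{f} M''$ is exact, pick $m' \in M'$ with $g(m')=m$. Then $g(m'_\xi) = m$ by the same projection argument. Equivalently, the projection onto the $\xi$-component is a $T$-equivariant idempotent that commutes with all morphisms, so a sequence is exact if and only if each graded piece is exact. The case $M^T$ is the special case $\xi = 0$.

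The main (mild) obstacle is the existence and uniqueness of the weight decomposition for rational, possibly infinite-dimensional, $T$-modules such as global section spaces. I would settle this by local finiteness together with simultaneous diagonalizability of commuting semisimple operators coming from the characters of $T$; alternatively, one can use that a rational $T$-module is the same thing as an $X^*(T)$-graded comodule over $\mathbb{C}[X^*(T)]$.
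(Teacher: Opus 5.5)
Your proof is correct and takes the approach the paper has in mind: the paper gives no proof and simply calls the lemma ``elementary,'' and that claim rests on exactly what you use, namely the decomposition $M=\bigoplus_{\chi} M^{T,\chi}$ of a rational (locally finite) $T$-module and the fact that $T$-equivariant maps preserve this grading. You also rightly handle the infinite-dimensional rational modules, such as the section spaces over the open cell $BqB/B$, whose isotypic components the paper later takes.
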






In order to compare weight spaces of $H^0(X_{yz}, \mathscr{L}_{\lambda})$ with those in $H^0(X^L_{y}, i^*_{L,z} \mathscr{L}_{\lambda})$, using Lemma \ref{InvExact} and the surjection of Corollary \ref{Surj} we consider, for judiciously chosen $\xi$, the short exact sequence 
$$
0 \to H^0(X_{yz}, \mathscr{I}(Y^L_{yz}, X_{yz})\otimes \mathscr{L}_\lambda)^{T, \xi} \to H^0(X_{yz}, \mathscr{L}_\lambda)^{T, \xi} \to H^0(X^L_{y}, i^*_{L,z}\mathscr{L}_\lambda)^{T,\xi} \to 0
$$
where $\mathscr{I}(Y^L_{yz}, X_{yz})$ is the ideal sheaf of $i_{L,z}(X^L_y) = Y^L_{yz}$ in $X_{yz}$. As a preliminary step, we consider a further restriction of these global sections to the dense open cells of the corresponding Schubert varieties to more easily describe the $T$-weights which appear. We collect the following standard lemmas; for reasons which will become clear, for the remainder of this subsection we denote $q:=yz \in W$.

\begin{lemma}\label{affinespace}
We have $T$-equivariant isomorphisms $BqB/B \simeq \mathrm{Spec} \mathbb{C}[x_{-\beta_1} \dots x_{-\beta_{\ell(q)}}]$, where $\beta_i \in \Phi^+$ satisfy $q^{-1}.\beta_i \in \Phi^{-}$, e.g. $\beta_i \in R(q^{-1}).$
\end{lemma}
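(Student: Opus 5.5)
The plan is to transport the problem to the unipotent radical of $B$ via the standard isomorphism $U \to BB/B$, $u \mapsto uB$, and then conjugate by $\dot q$. Concretely, the cell $BqB/B$ is the orbit of $\dot qB$ under $B$. Its stabilizer in $U$ is $U\cap \dot q U \dot q^{-1}$, so the orbit map identifies
$$BqB/B \;=\; U\dot q B/B \;\cong\; U_q := \prod_{\beta \in \Phi^+,\ q^{-1}\beta \in \Phi^-} U_\beta .$$
Here the product is taken in any fixed order; it is a closed subgroup isomorphic as a variety to affine space. This is the standard factorization $U = U_q\cdot (U\cap \dot qU\dot q^{-1})$ with uniqueness.

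I will make the map $u \mapsto u\dot qB/B$ explicit on $U_q$ and check that it is a bijection onto the cell. Injectivity follows from $U_q \cap \dot q U \dot q^{-1} = \{1\}$. Surjectivity comes from the factorization just cited. That the map is an isomorphism of varieties follows because it is a bijective orbit map between smooth varieties in characteristic zero, or alternatively from the local chart $U^-\dot qB/B$ around $\dot qB$, translated by $\dot q$.

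Next I will check $T$-equivariance. The torus $T$ acts on $BqB/B$ by left multiplication. It acts on $U_q$ by conjugation, since
$$t\,u\dot qB = (tut^{-1})\,t\dot qB = (tut^{-1})\dot q(\dot q^{-1}t\dot q)B = (tut^{-1})\dot qB.$$
On each root subgroup $U_\beta \cong \mathbb{G}_a$, conjugation scales the coordinate by $\beta(t)$. The coordinate function $x_{-\beta}$ on $U_\beta$ is therefore a $T$-eigenfunction of weight $-\beta$ under the induced action on functions. This explains the subscript convention in the statement.

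Finally, since the product map $\prod_\beta U_\beta \to U_q$ is a $T$-equivariant isomorphism of varieties, the coordinate ring is the polynomial ring $\mathbb{C}[x_{-\beta_1},\dots,x_{-\beta_{\ell(q)}}]$. The variables carry $T$-weights $-\beta_i$, and the $\beta_i$ range over the positive roots with $q^{-1}\beta_i<0$, which is $R(q^{-1})$. The count $|R(q^{-1})| = \ell(q^{-1}) = \ell(q)$ matches the dimension of the cell.

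The only delicate point is sign conventions: whether the weight of $x_{-\beta}$ is $-\beta$ or $\beta$. This depends on whether $T$ acts on functions by $(t\cdot f)(x)=f(t^{-1}x)$, and on the dualization $H^0(\cdot)^*$ used for Demazure modules. I will fix the convention that makes the weight of $x_{-\beta}$ equal to $-\beta$, matching the notation. All other steps are standard structure theory, for example as in \cite{KumarBook}.
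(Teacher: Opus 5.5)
Your proposal is correct and follows essentially the same route as the paper: identify $BqB/B$ $T$-equivariantly with $\prod_{\beta\in R(q^{-1})}U_\beta$, where $T$ acts by conjugation, and then read off that each coordinate on $U_\beta$ has weight $-\beta$ under $(t\cdot f)(x)=f(t^{-1}x)$. The only difference is that the paper cites Springer (Lemmas 8.3.5--8.3.6) for the cell isomorphism, whereas you derive it from the factorization $U=U_q\cdot(U\cap \dot qU\dot q^{-1})$; incidentally, the chart you want there is $\dot q\,U^-B/B$.
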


\begin{proof}
We have from \cite{Springer}*{Lemma 8.3.6} that $BqB/B \simeq \mathbb{A}^{\ell(q)}$; all that remains is to investigate the $T$-weights. Moreover by \cite{Springer}*{Lemma 8.3.5} we have a $T$-equivariant isomorphism $BqB/B \simeq \Pi_{\alpha \in R(q^{-1})} U_{\alpha}$. We have a $T$-equivariant isomorphism $U_{\alpha} \simeq \mathrm{Spec} \mathbb{C}[x_{-\alpha}]$, so we obtain
\begin{align*}
 BqB/B  & \simeq \Pi_{\beta \in R(q^{-1})}U_{\beta} \\ 
 & \simeq \Pi_{\beta \in R(q^{-1})} \mathrm{Spec} \mathbb{C}[x_{-\beta}] \\ 
 &\simeq \mathrm{Spec}( \bigotimes_{\beta \in R(q^{-1})} \mathbb{C}[x_{-\beta}])\\ 
 & \simeq \mathrm{Spec} \mathbb{C}[x_{-\beta_1}, \dots x_{-\beta_{l(q)}}]_{\beta_i \in R(q^{-1})}. \\
\end{align*}
\end{proof}

\begin{lemma}
Let $S:= \{ \beta \in R(q^{-1})| \beta \notin \Phi^+_L\}$.Via the isomorphism $BqB/B \simeq \mathrm{Spec} \mathbb{C}[x_{-\beta_1} \dots x_{\beta_{\ell(q)}}]$, the ideal \[I=I(B_LqB/B, BqB/B)\] whose vanishing locus is $B_LqB/B$ is generated by the $x_{-\beta}$ for $\beta \in S$.
\end{lemma}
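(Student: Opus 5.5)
We prove this by making the $T$-equivariant isomorphism $BqB/B\simeq \prod_{\beta\in R(q^{-1})}U_\beta$ of Lemma \ref{affinespace} compatible with the $B_L$-orbit. The idea is to split the product of root subgroups into the part lying in $L$ and the complementary part. Then $B_LqB/B$ appears as the coordinate subspace on which the complementary coordinates vanish.

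\textbf{Step 1: splitting $R(q^{-1})$.} Write $q=yz$ with $y\in W_L$ and $z$ as in the corollary on $Y^L_x$, so that $\dot z^{-1}B_L\dot z\subseteq B$. Then
\[
R(q^{-1}) = \bigl(R(q^{-1})\cap\Phi_L^+\bigr)\sqcup S .
\]
We claim that $R(q^{-1})\cap\Phi^+_L=R(y^{-1})$, where $R(y^{-1})$ is the inversion set computed in $W_L$ with positive system $\Phi_L^+=\Phi_L\cap\Phi^+$ (positive because $B_L\subseteq B$). For $\gamma\in\Phi_L^+$ we have $q^{-1}\gamma=z^{-1}(y^{-1}\gamma)$ with $y^{-1}\gamma\in\Phi_L$. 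The condition $\dot z^{-1}B_L\dot z\subseteq B$ says exactly that $z^{-1}$ maps $\Phi_L^+$ into $\Phi^+$, and hence maps $\Phi_L^-$ into $\Phi^-$. Therefore $q^{-1}\gamma<0$ if and only if $y^{-1}\gamma<0$, which proves the claim.

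\textbf{Step 2: factoring the cell.} Using \cite{Springer}*{Lemma 8.3.5}, and ordering the product of root subgroups with the $\Phi_L$ roots first, every point of $BqB/B$ is uniquely of the form $u_Su_L\,\dot qB$, where $u_L\in\prod_{\gamma\in R(y^{-1})}U_\gamma\subseteq U_L$ and $u_S\in\prod_{\beta\in S}U_\beta$. The product can be taken in any fixed order, and since the $x_{-\beta}$ are coordinates on this product variety, reordering is harmless. By the isomorphism $\mathring X^L_y\cong B_LqB/B$ of the first lemma, combined with the analogous cell decomposition $B_LyB_L/B_L\cong\prod_{\gamma\in R(y^{-1})}U_\gamma$, we get $B_LqB/B=\{u_L\dot qB\}$. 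This is precisely the locus where $u_S=1$, i.e.\ where all $x_{-\beta}$ with $\beta\in S$ vanish.

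\textbf{Step 3: reducedness.} It remains to check that $I(B_LqB/B,BqB/B)$ equals the ideal generated by these coordinates, and is not merely an ideal with the same radical. This is automatic: $(x_{-\beta})_{\beta\in S}$ is a prime ideal of the polynomial ring, and $B_LqB/B$ is reduced and irreducible, being isomorphic to an affine space. Hence the ideal of the reduced subscheme is exactly $(x_{-\beta})_{\beta\in S}$.

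The main obstacle is Step 2: the splitting of the product into $U_S$ and $U_L$ factors has to be genuinely compatible with the coordinate isomorphism of Lemma \ref{affinespace}. Since $\prod_{\beta\in S}U_\beta$ need not be a group, I would avoid relying on group structure and use the ordered-product statement of \cite{Springer} directly.
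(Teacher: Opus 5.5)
Your proposal is correct and follows essentially the paper's argument. The paper likewise identifies $B_LqB/B$ with the sub-product $\prod_{\beta\in R(q^{-1})\cap\Phi_L^+}U_\beta$ inside the root-subgroup decomposition of $BqB/B$ from Lemma \ref{affinespace}, and hence with the coordinate subspace cut out by the $x_{-\beta}$ for $\beta\in S$. You spell out what the paper leaves as ``the same considerations'': the equality $R(q^{-1})\cap\Phi_L^+=R(y^{-1})$, the compatibility of the ordered product with the $L$-cell, and the fact that the coordinate ideal is radical.
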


\begin{proof}
The same considerations as the previous lemma lead us to the $T$-equivariant isomorphism $B_LqB/B \simeq \prod_{\beta \in R(q^{-1}) \cap \Phi_L^+} U_{\beta}$. This is an affine subspace of $\mathbb{A}^{\ell(q)}$ whose ideal is generated by the $x_{-\beta}$ such that $\beta \notin R(q^{-1} )\cap \Phi_L^+$, that is, $\beta \in S$.
\end{proof}

\begin{lemma} \label{cellweights}
We have a $T$-equivariant isomorphism \[H^0(BqB/B, \mathscr{L}_{\lambda}) \simeq \mathbb{C}[x_{-\beta_1}, \dots x_{-\beta_{\ell(q)}}] \otimes \mathbb{C}_{-q \lambda}.\]
\end{lemma}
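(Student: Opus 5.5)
The plan is to trivialize $\mathscr{L}_\lambda$ on the open cell by an explicit $T$-eigen section and then read off weights using Lemma \ref{affinespace}. The cell $BqB/B$ is an affine space, so its Picard group is trivial and $\mathscr{L}_\lambda|_{BqB/B}$ is a trivial line bundle. The only content of the lemma is therefore the $T$-equivariance: we must find a nowhere-vanishing section which is a $T$-eigenvector and identify its weight.

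\textbf{Step 1 (the section).} Use the $T$-equivariant isomorphism $\prod_{\beta\in R(q^{-1})}U_\beta \xrightarrow{\sim} BqB/B$, $u\mapsto u\dot q B$, from \cite{Springer}*{Lemma 8.3.5}. Write $U_q:=\prod_{\beta \in R(q^{-1})} U_\beta$; since every point of the cell has a unique expression $u\dot q B$ with $u\in U_q$, the following is well defined:
$$\sigma(u\dot qB):=[u\dot q,1]\in G\times_B\mathbb{C}_{-\lambda}.$$
This $\sigma$ is a regular, nowhere-vanishing section over the cell. Consequently every section of $\mathscr{L}_\lambda$ over the cell can be written uniquely as $f\sigma$ with $f\in \mathbb{C}[BqB/B]=\mathbb{C}[x_{-\beta_1},\dots,x_{-\beta_{\ell(q)}}]$.

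\textbf{Step 2 (the weight of $\sigma$).} Let $t\in T$. Because $T$ normalizes $U_q$, we compute
$$t\cdot[u\dot q,1]=[tut^{-1}\dot q\,(\dot q^{-1}t\dot q),1].$$
Using the relation $(g,s)\sim(gb,\lambda(b)s)$, this equals $[tut^{-1}\dot q,\lambda(\dot q^{-1}t\dot q)^{-1}]$. Since $\lambda(\dot q^{-1}t\dot q)=(q\lambda)(t)$, we get
$$t\cdot \sigma(x)=(q\lambda)(t)^{-1}\,\sigma(t x),$$
where $tx$ is the translated point $tut^{-1}\dot qB$. For the induced action on sections, $(t\cdot s)(x)=t\cdot s(t^{-1}x)$, this says $t\cdot\sigma=(q\lambda)(t)^{-1}\sigma$. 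So $\sigma$ spans a copy of $\mathbb{C}_{-q\lambda}$. This sign is consistent with the fact that the dual $V^q_\lambda$ has extremal weight $q\lambda$.

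\textbf{Step 3 (assembling).} For $f$ a function on the cell, the action satisfies $t\cdot(f\sigma)=(t\cdot f)(t\cdot\sigma)$. Hence the map
$$f\otimes\sigma\mapsto f\sigma$$
is a $T$-equivariant isomorphism $\mathbb{C}[x_{-\beta_1},\dots,x_{-\beta_{\ell(q)}}]\otimes\mathbb{C}_{-q\lambda}\to H^0(BqB/B,\mathscr{L}_\lambda)$. The $T$-weights of the coordinate functions are those recorded in Lemma \ref{affinespace}.

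The main obstacle is purely bookkeeping: keeping the sign conventions for the linearization (the $\mathbb{C}_{-\lambda}$ twist and the contragredient action on functions) consistent with the conventions used in Lemma \ref{affinespace}. To guard against a sign error, the check will be that the constant section has weight $-q\lambda$, matching the lowest weight $q\lambda$ of the dual Demazure module $V^q_\lambda$.
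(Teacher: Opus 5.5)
Your proposal is correct and follows essentially the same route as the paper: trivialize $\mathscr{L}_\lambda$ on the cell, then show that the distinguished trivializing section (your $\sigma$, the paper's $\tilde{\mathbf{1}}$ with $\tilde{\mathbf{1}}(uqb)=\lambda(b)\tilde{\mathbf{1}}(q)$) has weight $-q\lambda$, using the same rewriting $t\dot q=\dot q(\dot q^{-1}t\dot q)$. The differences are cosmetic: you compute directly in the associated-bundle model on $BqB/B$ and make the trivialization explicit, whereas the paper works with $B$-equivariant functions on the translated big cell and then restricts to the Schubert cell.
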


\begin{proof}
Since $Pic(\mathbb{A}^n)=0$, $\mathscr{L}_{\lambda}|_{BqB/B}$ is isomorphic to $\mathscr{O}_{\mathbb{A}^{\ell(q)}}$, and what remains is to keep track of the $T$-weights. Since we already have a description of the $T$-module $H^0(BqB/B, \mathscr{O}_{BqB/B})$ from Lemma \ref{affinespace}, it suffices to understand the $T$-weight of $1 \in H^0(BqB/B, \mathscr{L}_{\lambda})$. Recall that $\mathscr{L}_{\lambda}= Sections(G \times_B \mathbb{C}_{-\lambda})$. From \cite{ChrissGinzburg}*{6.1.11}  we have $p:G \rightarrow G/B$ and for $U$ an open subvariety of $G/B$, a section $s \in H^0(U, \mathscr{L}_{\lambda})$ is a regular $\mathbb{C}$-valued function $\tilde{s}$ on $p^{-1}(U)$ such that \[\tilde{s}(gb)=\lambda(b) \cdot \tilde{s}(g).\] Let $\prescript{q}{}{U}$ denote the unipotent radical of $\prescript{q}{}{B}$, so that $\prescript{q}{}{U}qB$ is dense in $G$. Let $\tilde{\mathbf{1}}$ be the pullback $p^*(1)$, this is the unique (up to constants) function on $\prescript{q}{}{U}qB$ satisfying \[\tilde{\mathbf{1}}(uqb)=\lambda(b) \tilde{\mathbf{1}}(q).\] for $u \in \prescript{q}{}{U}$. A computation nearly identical to that of \cite{ChrissGinzburg}*{Lemma 6.1.15} yields 
 \[(t \cdot \tilde{\mathbf{1}})(uq)=\tilde{\mathbf{1}}(t^{-1}uq)=\tilde{\mathbf{1}}(t^{-1}q)=\lambda(q^{-1}t^{-1}q) \tilde{\mathbf{1}}(q)=-(q. \lambda)(t) \cdot \tilde{\mathbf{1}}(uq).\] 
 Thus the $T$-weight of $1 \in H^0(\prescript{q}{}{U}qB/B, \mathscr{L}_{\lambda})$ is $-q \lambda$. It follows that the weight of $1 \in H^0(BqB/B, \mathscr{L}_{\lambda})$ is $-q\lambda$; this is just restriction of functions to affine subschemes.
\end{proof}

\begin{lemma} \label{OpenCellRestriction}

We have $T$-equivariant inclusions
 \[0 \rightarrow H^0(X_q, \mathscr{L}_{\lambda}) \rightarrow H^0(BqB/B, \mathscr{L}_{\lambda}),\] 

  \[0 \rightarrow H^0(Y_q^{L}, \mathscr{L}_{\lambda}) \rightarrow H^0(B_LqB/B, \mathscr{L}_{\lambda}),\]
  and 
  \[0 \rightarrow H^0(X_q, \mathscr{I}(Y_q^L, X_q) \otimes \mathscr{L}_{\lambda}) \rightarrow H^0(BqB/B, I \otimes \mathscr{L}_{\lambda} )\]
\end{lemma}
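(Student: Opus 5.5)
The three inclusions all come from one fact: a section of a line bundle on an irreducible variety that vanishes on a dense open subset vanishes identically. So in each case I will take the map to be restriction to the relevant open cell and check injectivity and $T$-equivariance.

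\emph{First inclusion.} The Schubert variety $X_q$ is irreducible and reduced, being the closure of the cell $BqB/B \simeq \mathbb{A}^{\ell(q)}$, and this cell is open and dense in $X_q$. Any $\sigma \in H^0(X_q,\mathscr{L}_\lambda)$ with $\sigma|_{BqB/B}=0$ therefore vanishes on a dense open subset of a reduced scheme, so $\sigma=0$. The restriction map is $T$-equivariant because $BqB/B$ is $T$-stable (indeed $B$-stable) and the $T$-linearization on $\mathscr{L}_\lambda$ restricts to it.

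\emph{Second inclusion.} The same argument applies to $Y^L_q=\overline{B_LqB/B}$. By the corollary on embeddings of Schubert varieties, $Y^L_q \cong X^L_y$, so it is an irreducible, reduced $L$-Schubert variety. The $B_L$-orbit $B_LqB/B$ is the image of the open cell $\mathring{X}^L_y$ under the isomorphism $i_{L,z}$, so it is open and dense in $Y^L_q$. It is also $T$-stable since $T \subseteq B_L$. Hence restriction is injective and $T$-equivariant.

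\emph{Third inclusion.} This follows from the first. The sheaf $\mathscr{I}(Y^L_q,X_q)\otimes\mathscr{L}_\lambda$ is a subsheaf of $\mathscr{L}_\lambda$, so $H^0(X_q,\mathscr{I}\otimes\mathscr{L}_\lambda)\subseteq H^0(X_q,\mathscr{L}_\lambda)$, and by left exactness of $H^0$ this is again compatible with restriction. Restricting to the open set $U=BqB/B$ sends this subspace into $H^0(U,(\mathscr{I}\otimes\mathscr{L}_\lambda)|_U)$. Since restriction of the ideal sheaf of a closed subscheme to an open set is the ideal sheaf of the intersection, we have
\[
\mathscr{I}(Y^L_q,X_q)|_U=\mathscr{I}(Y^L_q\cap U,\,U).
\]
Here $Y^L_q\cap U=B_LqB/B$, because $B_LqB/B$ is closed in $U$ by the previous lemma (it is a linear subspace cut out by the $x_{-\beta}$, $\beta\in S$). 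On the affine space $U$ this ideal sheaf corresponds to the ideal $I$, so the target is $H^0(BqB/B, I\otimes\mathscr{L}_\lambda)$. Injectivity is inherited from the first inclusion, and equivariance holds because $\mathscr{I}$ is $T$-stable, $Y^L_q$ being $T$-stable.

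The main point requiring care is the identification $Y^L_q\cap BqB/B=B_LqB/B$ as schemes, that is, that the orbit is closed in the big cell with reduced structure. I expect this to follow directly from the preceding lemma's description of $B_LqB/B$ as $\prod_{\beta\in R(q^{-1})\cap\Phi_L^+}U_\beta$ inside $\prod_{\beta \in R(q^{-1})} U_\beta$, together with reducedness of $Y^L_q$.
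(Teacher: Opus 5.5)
Your proof is correct, but it follows a different and more elementary route than the paper's.

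The paper obtains all three injections from the Grothendieck--Cousin complex of Kempf. It uses the filtration of $X_q$ by the Schubert varieties $X_v$, $v\le q$ (affine cells, affine inclusions), reads off $0\to H^0(X_q,\mathscr{F})\to \mathrm{Cousin}^0=H^0(BqB/B,\mathscr{F})$, and treats the other two sheaves ``mutatis mutandis''.

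You instead use only that $X_q$ and $Y^L_q$ are integral with the relevant orbit open and dense, so sections of a line bundle are determined by their restriction to that orbit. You handle the ideal-sheaf case by embedding $\mathscr{I}(Y^L_q,X_q)\otimes\mathscr{L}_\lambda$ into $\mathscr{L}_\lambda$, which is legitimate because $\mathscr{L}_\lambda$ is invertible. Injectivity is then inherited from the first map.

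Your route buys transparency, especially for the third map. Identifying the cohomology of a Cousin complex with $H^*(X_q,\mathscr{F})$ in general needs local-cohomology vanishing hypotheses on $\mathscr{F}$. These are standard for $\mathscr{L}_\lambda$ on Cohen--Macaulay Schubert varieties but less evident for an ideal sheaf. Injectivity on $H^0$, by contrast, only needs $\mathscr{F}$ to be torsion-free on an integral scheme, which is exactly what you use. The paper's framework gives a cell-wise description of all of $H^*$, but nothing beyond $H^0$ is needed here.

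Your explicit check that $Y^L_q\cap BqB/B=B_LqB/B$ as reduced subschemes is also worth keeping. The orbit is closed in $U$ by the preceding lemma, so it equals the trace on $U$ of its closure. This is what identifies the target with $H^0(BqB/B,I\otimes\mathscr{L}_\lambda)$, a step the paper leaves implicit.
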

\begin{proof}
We have a filtration of $X_q$ by closed subschemes $X_v$, $v \leq q$, the Schubert cells $BvB/B$ are affine, and the inclusions $i_v: BvB/B \rightarrow X_q$ are affine morphisms. These conditions ensure that the cohomology of the global Grothendieck-Cousin complex $\mathrm{Cousin}^{\bullet}_{\{X_v\}}(\mathscr{F})$ is isomorphic to $H^*(X_q, \mathscr{F})$, for $\mathscr{F} \in Coh(X_q)$, see for instance \cite{Kempf} or the Appendix of \cite{KumarBook}. In particular we have \[0 \rightarrow H^0(X_q, \mathscr{L}_{\lambda}) \rightarrow \mathrm{Cousin}_{\{X_v\}}^{0}(\mathscr{L}_{\lambda})=H^0(BqB/B, \mathscr{L}_{\lambda}).\] Mutatis mutandis for $H^0(Y_q^{L}, \mathscr{L}_{\lambda})$, $H^0(X_q, \mathscr{I}(Y_q^L, X_q) \otimes \mathscr{L}_{\lambda})$. 
\end{proof}

We summarize the above discussion with the following commutative diagram of rational $T$-representations. For display reasons we write $A= \mathbb{C}[x_{-\beta_1}, \dots x_{-\beta_{l(q)}}]$ where $\beta_i \in R(q^{-1})$.

\begin{equation} \label{idealses}
\begin{tikzcd}
& 0 \arrow[d] &0 \arrow[d] &0 \arrow[d] & \\
0 \arrow[r] &  H^0(X_q,\mathscr{I}(Y_q^L, X_q) \otimes \mathscr{L}_{\lambda}) \arrow[r] \arrow[d] &H^0(X_q, \mathscr{L}_{\lambda})\arrow[r] \arrow[d] & H^0(Y_q^L, \mathscr{L}_{\lambda}) \arrow[d] \arrow[r]& 0\\
 0 \arrow[r]& H^0(BqB/B, I \otimes \mathscr{L}_{\lambda}) \arrow[r] \arrow[d, equals] & H^0(BqB/B, \mathscr{L}_{\lambda}) \arrow[r] \arrow[d, equals] & H^0(B_LqB/B, \mathscr{L}_{\lambda}) \arrow[r] \arrow[d, equals] & 0 \\
 0 \arrow[r] & (\{x_{-\beta}\}_{\beta \in S}) \otimes \mathbb{C}_{-q \lambda} \arrow[r] & A \otimes \mathbb{C}_{-q \lambda} \arrow[r] & \arrow[r] A/(\{x_{\beta}\}_{\beta \in S}) \otimes \mathbb{C}_{-q \lambda} \arrow[r] & 0.\\ 
\end{tikzcd}
\end{equation}



 
 \subsection{Specialization to Theorem \ref{PreciseTheorem}} 
 We finally specialize the results of this section to their application in proving Theorem \ref{PreciseTheorem}(1). Our Levi $L$ is as before $L=\dot{v}L_\eta \dot{v}^{-1}$ for $L_\eta$ the Levi of the standard parabolic associated to the dominant coweight $\eta$ and $v \in W^\eta$. Retaining the notation as in Proposition \ref{LeviModuleInclusion}, we make the following associations: 
 $$
 z=v(\pi^\eta(w^{-1} \ast v))^{-1}, \ \ y =w_L = v(\pi_\eta(w^{-1} \ast v))^{-1} v^{-1}.
 $$
 This $z$ satisfies $\dot{z}^{-1}B_L \dot{z} \subseteq B$, as 
 $$
 \dot{z}^{-1} B_L \dot{z} = \pi^{\eta}(w^{-1} \ast v) v^{-1} \left( v B_\eta v^{-1} \right) v (\pi^\eta(w^{-1} \ast v))^{-1} = \pi^{\eta}(w^{-1} \ast v) B_\eta \pi^{\eta}(w^{-1} \ast v)^{-1} \subseteq B
 $$
 since $\pi^{\eta}(w^{-1} \ast v) \in W^\eta$. Further, 
 $$
 yz= v(\pi_\eta(w^{-1} \ast v))^{-1} v^{-1} \cdot v(\pi^\eta(w^{-1} \ast v))^{-1} = v(w^{-1} \ast v)^{-1} = v(v^{-1} q) = q
 $$
 where as before $q \leq w$ is the unique element such that $w^{-1} \ast v = q^{-1} v$. Thus, we can apply all of the results of this section to the fixed inclusion 
 $$
 i: X^L_{w_L} \hookrightarrow X_q.
 $$
 Finally, by Proposition \ref{pullback} we see that $i^\ast \mathscr{L}_\lambda \cong \mathscr{L}_{z\lambda} =\mathscr{L}_{\lambda_L}$. With this in hand, we aim to prove the following theorem, which is a reformulation of Theorem \ref{PreciseTheorem}(1) into the language of this section. 
 
 \begin{theorem} \label{RothFinale}
 Let $\mu $ be a weight of $V^w_\lambda$ lying on the face $\mathcal{F}(v,\eta)$ of the Demazure weight polytope $P^w_\lambda$. Then 
 $$
 H^0(X^L_{w_L}, \mathscr{L}_{\lambda_L})^{T, -\mu} \cong H^0(X_q, \mathscr{L}_\lambda)^{T, -\mu}.
 $$
 \end{theorem}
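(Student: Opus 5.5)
The plan is to use the top row of diagram \eqref{idealses}, restricted to the $(-\mu)$-isotypic component. By Corollary \ref{Surj} and Lemma \ref{InvExact}, the sequence
$$
0 \to H^0(X_q, \mathscr{I}(Y^L_q,X_q)\otimes \mathscr{L}_\lambda)^{T,-\mu} \to H^0(X_q,\mathscr{L}_\lambda)^{T,-\mu} \to H^0(X^L_{w_L}, \mathscr{L}_{\lambda_L})^{T,-\mu} \to 0
$$
is exact. It therefore suffices to show that the ideal-sheaf term has no $(-\mu)$-weight. By the left column of \eqref{idealses} (Lemma \ref{OpenCellRestriction}), this term embeds $T$-equivariantly into $(\{x_{-\beta}\}_{\beta\in S})\otimes \mathbb{C}_{-q\lambda}$. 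So the task reduces to showing that $-\mu$ is not a weight of this ideal tensored with $\mathbb{C}_{-q\lambda}$.

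The weights of the ideal are explicit. Each is of the form $-q\lambda - \sum_{\beta} n_\beta \beta$, where $n_\beta\ge 0$, the $\beta$ range over $R(q^{-1})$, and at least one $\beta\in S$ has $n_\beta>0$. A $(-\mu)$-weight would therefore give
$$
\mu = q\lambda + \sum_{\beta\in R(q^{-1})} n_\beta \beta .
$$
The natural test functional is pairing with the coweight $v\eta$, which cuts out the face. Since $\mu \in \mathcal{F}(v,\eta)$ and $q\lambda = v(w^{-1}\ast v)^{-1}\lambda$, we get
$$
\langle q\lambda, v\eta\rangle = \langle \lambda, (w^{-1}\ast v)\eta\rangle = \langle \lambda, \pi^\eta(w^{-1}\ast v)\eta\rangle = \langle \mu, v\eta\rangle ,
$$
where the middle equality uses that $W_\eta$ fixes $\eta$. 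It follows that $\sum_\beta n_\beta \langle \beta, v\eta\rangle = 0$.

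The key step, and the expected obstacle, is a sign claim: for every $\beta\in R(q^{-1})$ we have $\langle \beta, v\eta\rangle \ge 0$, with equality exactly when $\beta\in\Phi_L$. Once this holds, the displayed vanishing forces $n_\beta = 0$ for all $\beta\in S$, which is a contradiction. The equality case is immediate, because $\Phi_L = v\Phi_\eta$ is precisely the set of roots vanishing on $v\eta$, and for positive $\beta$ this puts $\beta$ in $\Phi_L^+$. For the inequality, I would write $q = v u^{-1}$ with $u = w^{-1}\ast v$. Then $\langle \beta, v\eta\rangle = \langle q^{-1}\beta, u\eta\rangle$, where $q^{-1}\beta$ is negative, so one needs $\langle \gamma, u\eta\rangle \le 0$ for $\gamma = q^{-1}\beta<0$.

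I would first try a geometric route, avoiding the root combinatorics. For $x\in BqB/B$, the limit $\lim_{t\to 0} (v\eta)(t)\cdot x$ should lie in $B_L qB/B$ or at least in $X_q$. This follows from the extremality built into the Demazure product: $u = w^{-1}\ast v$ maximizes over $[e,w^{-1}]\,v$, which gives $\langle\lambda,u\eta\rangle \le \langle\lambda, x^{-1}v\eta\rangle$ for all $x\le q$. That is the Hilbert–Mumford inequality underlying Theorem \ref{FacetInequalities}.

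Failing that, I would argue combinatorially: take $\beta>0$ with $q^{-1}\beta<0$ and $\langle\beta,v\eta\rangle<0$, and show that $s_\beta q$ yields an element $x\le w$ with $x\lambda$ pairing strictly below the face value against $v\eta$. This would contradict $\langle \lambda, (w^{-1}\ast v)\eta\rangle$ being the minimum of $\langle x\lambda, v\eta\rangle$ over $x\le w$, which is how Lemma \ref{Vertices} combines with Definition \ref{FaceDef}. Establishing $s_\beta q\le w$, or some replacement for it using $q\le w$ together with the maximality defining the Demazure product, is where the real care is needed.

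If only a strict inequality for the weight $\lambda$ is available, rather than for the coweight, I would apply the same argument to weights of the form $x\lambda$ directly. The goal would be to show that the ideal-term weights pair strictly above $\langle\mu, v\eta\rangle$ via the polytope inequalities for $P^q_\lambda$. That suffices, since every $(-\mu)$-isotypic piece of $H^0(X_q,\mathscr{L}_\lambda)$ has weight lying in $P^q_\lambda\subseteq P^w_\lambda$.
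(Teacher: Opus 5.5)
\textbf{Gap: the sign claim is never proved.} Your reduction is the paper's own. You use the exact $(-\mu)$-isotypic top row of \eqref{idealses}, embed the ideal term into $(\{x_{-\beta}\}_{\beta\in S})\otimes\mathbb{C}_{-q\lambda}$, and run a sign argument against $v\eta$. Your direct computation $\langle q\lambda, v\eta\rangle=\langle \mu, v\eta\rangle$ is a clean substitute for the paper's assertion $-\mu\in -q\lambda+\mathbb{Z}\Phi_L$. However, the one nontrivial input is the sign claim $\langle \beta, v\eta\rangle\ge 0$ for all $\beta\in R(q^{-1})$, which is the paper's Lemma \ref{inversionsets} ($v^{-1}\beta\succ 0$). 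You leave it unproved, and neither route you sketch establishes it.

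\emph{The geometric route is circular.} The limit $\lim_{t\to 0}(v\eta)(t)\cdot x$ always exists in $X_q$, because $X_q$ is projective. In cell coordinates, $(v\eta)(t)$ rescales the $U_\beta$-coordinate by $t^{\langle \beta, v\eta\rangle}$. So ``the limit stays in $B_LqB/B$ for every $x$ in the cell'' is equivalent to the sign claim. The inequality $\langle \lambda, u\eta\rangle\le \langle\lambda, x^{-1}v\eta\rangle$ does not yield it.

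\emph{The combinatorial route, built on the fixed $\lambda$, genuinely fails.} With $x=s_\beta q$ you get $\langle x\lambda, v\eta\rangle=\langle q\lambda, v\eta\rangle-\langle \lambda, q^{-1}\beta^\vee\rangle\langle\beta, v\eta\rangle$. When $\lambda$ is singular and $\langle\lambda, q^{-1}\beta^\vee\rangle=0$, there is no strict drop below the face value and hence no contradiction. Your last paragraph does not rescue this, since the face inequality for $P^q_\lambda$ is only non-strict, and strictness for the ideal's weights is exactly what is at stake. Meanwhile, the point you flag as delicate, $s_\beta q\le w$, is immediate: $\beta\succ 0$ and $q^{-1}\beta\prec 0$ give $s_\beta q<q\le w$.

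\textbf{The repair.} The missing idea is that the sign claim does not involve $\lambda$ at all, so it should be proved in the Bruhat order. The paper argues as follows. From $q^{-1}s_\beta<q^{-1}\le w^{-1}$ and the maximality of $w^{-1}\ast v=q^{-1}v$ on $[e,w^{-1}]\,v$, one gets $(q^{-1}v)\,s_{v^{-1}\beta}=q^{-1}s_\beta v\le q^{-1}v$. Hence $q^{-1}v$ sends $v^{-1}\beta$ to a root of the opposite sign. Since $q^{-1}v(v^{-1}\beta)=q^{-1}\beta\prec 0$, this gives $v^{-1}\beta\succ 0$, and therefore $\langle\beta, v\eta\rangle=\langle v^{-1}\beta,\eta\rangle\ge 0$.

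Alternatively, your own $s_\beta q$ argument works verbatim if you replace $\lambda$ by a regular dominant weight such as $\rho$. Then $\langle \rho, q^{-1}\beta^\vee\rangle<0$ forces the strict drop, while $\langle x\rho, v\eta\rangle\ge\langle q\rho, v\eta\rangle$ for all $x\le w$ is the face inequality for $P^w_\rho$. With this lemma supplied, the rest of your argument (the equality case, and $n_\beta=0$ for $\beta\in S$) goes through.
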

 
 Before proving this theorem, we give the final preparatory technical lemma which is relevant to this specific setup.

 \begin{lemma}\label{inversionsets}
Suppose that $\beta \in R(q^{-1})$. Then $v^{-1}\beta \succ 0$.
\end{lemma}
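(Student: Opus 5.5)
The plan is to reduce the claim to a length-additivity statement for the Demazure product. Then I would translate that additivity into a disjointness of inversion sets.

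\textbf{Step 1: factor the Demazure product.} Fix a reduced word $w^{-1}=s_{i_1}\cdots s_{i_k}$. Using the inductive description of the Demazure product recalled in the preliminaries, we have
$$w^{-1}\ast v = s_{i_1}\ast(s_{i_2}\ast(\cdots(s_{i_k}\ast v)\cdots)).$$
At each stage $s_{i_j}\ast x$ is either $s_{i_j}x$, and then the length goes up by one, or it is $x$ itself. So $w^{-1}\ast v = u\,v$, where $u$ is the product of the $s_{i_j}$ that were actually applied. This $u$ is a subword of a reduced word for $w^{-1}$, so $u\le w^{-1}$. Moreover, since every step that was applied raised the length by exactly one, we get
$$\ell(uv)=\ell(u)+\ell(v).$$

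\textbf{Step 2: identify $u$ with $q^{-1}$.} Since $u\le w^{-1}$, we have $u^{-1}\le w$, and also $w^{-1}\ast v=(u^{-1})^{-1}v$. The element $q$ of Proposition \ref{LeviModuleInclusion} is determined by $q^{-1}v=w^{-1}\ast v$, simply by cancelling $v$. Hence $q=u^{-1}$, and Step 1 gives
$$\ell(q^{-1}v)=\ell(q^{-1})+\ell(v).$$

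\textbf{Step 3: additivity means disjoint inversion sets.} I would use the standard fact that $\ell(xy)=\ell(x)+\ell(y)$ if and only if $R(x)\cap R(y^{-1})=\varnothing$. A quick sanity check: for $x=y=s_i$ both sets equal $\{\alpha_i\}$, and additivity indeed fails.

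To prove the needed direction, suppose $\gamma\in R(x)\cap R(y^{-1})$. Then $\delta:=-y^{-1}\gamma$ is a positive root with $y\delta=-\gamma<0$, so $\delta\in R(y)$. On the other hand $xy\delta=-x\gamma>0$, so $\delta\notin R(xy)$. But additivity forces $R(y)\subseteq R(xy)$, which can be seen from the usual decomposition $R(xy)=R(y)\sqcup y^{-1}R(x)$ in the additive case. This is a contradiction. This part is the only step that needs any care, and it is routine Coxeter combinatorics.

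\textbf{Step 4: conclude.} Apply Step 3 with $x=q^{-1}$ and $y=v$. This gives $R(q^{-1})\cap R(v^{-1})=\varnothing$. So if $\beta\in R(q^{-1})$, then $\beta$ is a positive root that does not lie in $R(v^{-1})$, which means exactly that $v^{-1}\beta\succ 0$.
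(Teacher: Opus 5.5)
Your proof is correct, but it takes a different route from the paper's.

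The paper never passes through lengths. It first observes that $q^{-1}\ast v=q^{-1}v$, using $q\le w$, the inclusion $[e,q^{-1}]v\subseteq[e,w^{-1}]v$, and the description of $x\ast y$ as the maximum of $[e,x]y$. It then treats a single $\beta\in R(q^{-1})$ at a time. From $q^{-1}s_\beta<q^{-1}$ it gets $(q^{-1}v)\,s_{v^{-1}\beta}=q^{-1}s_\beta v<q^{-1}v$. The reflection criterion for Bruhat order then says $q^{-1}v(v^{-1}\beta)=q^{-1}\beta$ and $v^{-1}\beta$ have opposite signs, which forces $v^{-1}\beta\succ0$.

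You instead prove the global statement $\ell(q^{-1}v)=\ell(q^{-1})+\ell(v)$ by running the inductive formula for $\ast$ along a reduced word of $w^{-1}$. You then invoke the standard equivalence of length-additivity with $R(q^{-1})\cap R(v^{-1})=\varnothing$, which is exactly the lemma restated.

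What each route buys:
\begin{itemize}
\item Your route re-derives that $q$ exists and lies in $[e,w]$. It also makes clear that the lemma is equivalent to $q^{-1}\cdot v$ being a reduced factorization.
\item The paper's route is shorter and local, one reflection at a time. It needs neither the subword property nor the inversion-set decomposition.
\end{itemize}

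One small point to tighten in Step~1. That $\ell(u)$ equals the number $m$ of applied letters is not automatic from $u$ being a subword. It follows from $\ell(v)+m=\ell(uv)\le\ell(u)+\ell(v)\le m+\ell(v)$.
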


\begin{proof}
First, note that $q^{-1} \ast v = q^{-1}v$, since $q^{-1} \ast v = \mathrm{max}\{uv | u \leq q^{-1}\}$ by definition and we know that $w^{-1} \ast v = q^{-1} v$ and $q \leq w$. Since by assumption $\beta \in R(q^{-1})$, we have $q^{-1}s_\beta \le q^{-1}$. Thus by the definition of the Demazure product, necessarily $$q^{-1}s_\beta v \le q^{-1}v.$$  
We rewrite this as
$$
q^{-1}v(v^{-1}s_\beta v) \le q^{-1}v.
$$
But this means that the root $q^{-1}v \left(v^{-1}\beta \right)$ has opposite sign from the root $v^{-1}\beta$. Since $q^{-1}\beta \prec 0$, the result follows.
\end{proof}

\begin{lemma} \label{VanishLemma}
Let $\nu \in \mathbb{Z} \Phi_L$. We then have $((\{x_{-\beta}\}_{\beta \in S}) \otimes \mathbb{C}_{-q \lambda})^{T, -q \lambda + \mathbb{Z} \Phi_L}=0$. We thus have isomorphisms of weight spaces \[H^0(BqB, \mathscr{L}_{\lambda})^{T, -q \lambda + \mathbb{Z}\Phi_L} \simeq H^0(B_LqB/B, \mathscr{L}_{\lambda})^{T, -q \lambda+ \mathbb{Z} \Phi_L}.\]
\end{lemma}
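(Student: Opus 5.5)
The plan is to compute the weights of the ideal $(\{x_{-\beta}\}_{\beta\in S})\otimes\mathbb{C}_{-q\lambda}$ directly and show that none of them lies in the coset $-q\lambda+\mathbb{Z}\Phi_L$. The second claim then follows from the exactness of isotypic components (Lemma \ref{InvExact}) applied to the bottom row of diagram \eqref{idealses}.

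\textbf{Weights of the ideal.} By Lemma \ref{cellweights}, a monomial $\prod_{\beta\in R(q^{-1})} x_{-\beta}^{m_\beta}$ in the ideal has $T$-weight $-q\lambda-\sum_\beta m_\beta\beta$, with some $m_{\beta_0}>0$ for a $\beta_0\in S$. It therefore suffices to show the following: if $\sum_\beta m_\beta \beta\in\mathbb{Z}\Phi_L$ with all $m_\beta\ge 0$ and $\beta\in R(q^{-1})$, then $m_\beta=0$ for every $\beta\in S$.

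\textbf{Key step.} Apply $v^{-1}$, so that $\mathbb{Z}\Phi_L=v\,\mathbb{Z}\Phi_\eta$ becomes $\mathbb{Z}\Phi_\eta=\{\gamma:\langle\gamma,\eta\rangle=0\}\cap\mathbb{Z}\Phi$. By Lemma \ref{inversionsets}, every $\beta\in R(q^{-1})$ has $v^{-1}\beta\succ0$, and hence $\langle v^{-1}\beta,\eta\rangle\ge 0$ since $\eta$ is dominant. Moreover, equality $\langle v^{-1}\beta,\eta\rangle=0$ holds exactly when $v^{-1}\beta\in\Phi_\eta$, i.e.\ when $\beta\in v\Phi_\eta=\Phi_L$. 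Since $\beta\succ0$, the condition $\beta\in\Phi_L$ is the same as $\beta\in\Phi_L^+$, so for $\beta\in S$ we get $\langle v^{-1}\beta,\eta\rangle>0$. Pairing the relation with $\eta$ gives
\[0=\Big\langle v^{-1}\sum_\beta m_\beta\beta,\ \eta\Big\rangle=\sum_\beta m_\beta\langle v^{-1}\beta,\eta\rangle,\]
a sum of nonnegative terms. Hence every term vanishes, which forces $m_\beta=0$ for all $\beta\in S$, as required.

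\textbf{Conclusion.} Taking the $(-q\lambda+\mathbb{Z}\Phi_L)$-isotypic part of the bottom exact row of \eqref{idealses} is exact, and the left term is zero by the key step. Therefore the middle and right terms are isomorphic in those weights, which is the second claim.

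\textbf{Main obstacle.} The delicate point is the positivity input, namely that $v^{-1}\beta$ is positive rather than merely $\beta$. This is exactly what Lemma \ref{inversionsets} supplies, and it is what makes pairing with $\eta$ a valid sign argument. One also needs care to identify $\Phi_L$ with $v\Phi_\eta$, so that vanishing of $\langle\cdot,\eta\rangle$ characterizes $\Phi_L$ after conjugation.
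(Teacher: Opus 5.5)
Your proposal is correct and is essentially the paper's argument. Both transport to the standard Levi $L_\eta$ by $v^{-1}$, use Lemma \ref{inversionsets} to get $v^{-1}\beta\succ 0$, pair with $\eta$ to force $m_\beta=0$ for $\beta\in S$, and conclude by exactness of isotypic components.

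One harmless slip: $\mathbb{Z}\Phi_\eta$ is in general only contained in $\{\gamma\in\mathbb{Z}\Phi:\langle\gamma,\eta\rangle=0\}$, not equal to it. For regular $\eta$ in rank at least $2$, the former is $0$ and the latter is not. Your argument only uses that inclusion, so it is unaffected.
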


\begin{proof}
A nonzero weight vector in $(\{x_{-\beta}\}_{\beta \in S}) \otimes \mathbb{C}_{-q \lambda})$ has weight $-q \lambda- \sum_{\beta \in R(q^{-1})} a_{\beta} \beta$, with $a_{\beta} \in \mathbb{Z}^{\geq 0}$ such that at least some $\beta \in S$ has $a_{\beta} \neq 0$. So we must show that weights $-\sum_{\beta \in R(q^{-1})} a_{\beta} \beta$ such that $a_{\beta} \neq 0$ for some $\beta \in S$ are not in $\mathbb{Z}\Phi_L$. After acting by $v^{-1}$, $\Phi_L$ becomes the root system of the standard Levi $L_{\eta}$. By \ref{inversionsets}, acting by $v^{-1}$ transforms $\beta \in S$ into positive roots, which are not roots of the standard Levi $L_{\eta}$ by definition of $S$. In particular we have $\langle \eta, v^{-1} \sum_{\beta \in R(q^{-1})} a_{\beta} \beta \rangle >0$, as long as $a_\beta >0$ for some $\beta \in S$, whereas $\langle \eta, \mathbb{Z}\Phi_{L_{\eta}} \rangle =0$. 
\end{proof}

\begin{proof}[Proof of Theorem \ref{RothFinale}]
Consider the short exact sequence 
$$
0 \to H^0(X_{q}, \mathscr{I}(Y^L_{q}, X_{q})\otimes \mathscr{L}_\lambda)^{T, -\mu} \to H^0(X_{q}, \mathscr{L}_\lambda)^{T, -\mu} \to H^0(X^L_{w_L}, \mathscr{L}_{\lambda_L})^{T,-\mu} \to 0
$$
as in the discussion following Lemma \ref{InvExact}. We will show that 
$$
H^0(X_{q}, \mathscr{I}(Y^L_{q}, X_{q})\otimes \mathscr{L}_\lambda)^{T, -\mu} = 0.
$$
By Lemma \ref{OpenCellRestriction}, we have an injection 
$$
0 \rightarrow H^0(X_{q}, \mathscr{I}(Y^L_{q}, X_{q})\otimes \mathscr{L}_\lambda)^{T, -\mu}  \rightarrow H^0(BqB/B, I \otimes \mathscr{L}_\lambda )^{T, -\mu} .
$$
As $-\mu \in -q\lambda +\mathbb{Z}\Phi_L$, by Lemma \ref{VanishLemma}, the desired vanishing follows, and thus 
$$
 H^0(X^L_{w_L}, \mathscr{L}_{\lambda_L})^{T, -\mu} \cong H^0(X_q, \mathscr{L}_\lambda)^{T, -\mu}.
$$

\end{proof}

\section{Proof of Theorem \ref{PreciseTheorem}(2)}

We now construct a proof of the second part of Theorem \ref{PreciseTheorem}, to compare the multiplicities $V_\lambda^w(\mu)$ and $V_\lambda^q(\mu)$ for $\mu \in \mathcal{F}(v, \eta)$ and the specific $q \leq w$ as constructed. Note that in many cases, we can have that $q=w$ in which case this second equality is unnecessary. As a special case of interest, this happens when $v=e \in W^\eta$, as then $w^{-1} \ast v = w^{-1}$. Thus, for the remainder of this section, we assume that $e \neq v \in W^\eta$, so that $l(v) \geq 1$. 

To this end, fix for the remainder of this section $s_i$ a simple reflection such that $s_iv \le v$; in particular, $v^{-1} \alpha_i\prec 0$. We record a few preparatory lemmas. 

\begin{lemma} \label{ConvexLemma1}
$\langle v^{-1}\alpha_i, \eta\rangle < 0$.
\end{lemma}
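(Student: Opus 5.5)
The plan is to combine the dominance of $\eta$ with the characterization of minimum-length coset representatives recalled in Section \ref{Notation}: $v \in W^\eta$ if and only if $v(\Phi_\eta^+) \subset \Phi^+_G$. The argument has two steps: first a weak inequality coming from dominance, then ruling out equality using minimality of $v$.

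\emph{Weak inequality.} Since $s_i v < v$, we have $\ell(s_i v) = \ell(v) - 1$. Equivalently $\ell(v^{-1}s_i) < \ell(v^{-1})$, which is the standard criterion that $v^{-1}\alpha_i$ is a negative root; this is the fact $v^{-1}\alpha_i \prec 0$ already recorded before the statement. Write $v^{-1}\alpha_i = -\gamma$ with $\gamma \in \Phi^+$. Since $\eta$ is a dominant coweight, $\langle \gamma, \eta\rangle \geq 0$ (pair $\eta$ with $\gamma$ written as a nonnegative combination of simple roots). Hence $\langle v^{-1}\alpha_i, \eta\rangle \leq 0$.

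\emph{Strictness.} Suppose instead that $\langle v^{-1}\alpha_i, \eta \rangle = 0$. Then $\langle \gamma, \eta\rangle = 0$, so $\gamma \in \Phi_\eta$ by the description $\Phi_\eta = \{\alpha \in \Phi_G \mid \langle \alpha, \eta\rangle = 0\}$. As $\gamma$ is also positive, $\gamma \in \Phi_\eta^+$. Because $v$ is a minimum-length representative, $v\gamma \in \Phi^+_G$. But $v\gamma = -v v^{-1}\alpha_i = -\alpha_i$, which is a negative root. This contradiction gives the strict inequality $\langle v^{-1}\alpha_i, \eta\rangle < 0$.

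None of these steps is a serious obstacle. The only point needing care is to use the same convention for $W^\eta$ as the paper, namely $v(\Phi_\eta^+) \subset \Phi^+$, and the same translation of $s_i v < v$ into $v^{-1}\alpha_i \prec 0$, so that the two conditions collide exactly on the root $\gamma = -v^{-1}\alpha_i$.
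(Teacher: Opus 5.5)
Your proposal is correct and follows the paper's proof: you get the weak inequality from the dominance of $\eta$ and rule out equality because $v \in W^\eta$ would force $v(-v^{-1}\alpha_i) = -\alpha_i$ to be positive. The only difference is that you name the positive root $\gamma = -v^{-1}\alpha_i$ where the paper names the negative root $v^{-1}\alpha_i$, which is purely notational.
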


\begin{proof}
Since $v^{-1}\alpha_i\prec 0$ and $\eta$ is dominant, we must have $\langle v^{-1} \alpha_i, \eta \rangle \le 0$. 

If we had equality, then setting $\gamma := v^{-1}\alpha_i$ would give that $\gamma$ belongs to the sub-root system $\Phi_\eta$ of roots orthogonal to $\eta$. But the root $\gamma$ is a negative root, so $v(-\gamma)$ would be a positive root, as $v \in W^\eta$. But of course $v(-\gamma)=-\alpha_i$ is not a positive root. 
\end{proof}


\begin{lemma} \label{ConvexLemma2}
If $\mu+ k\alpha_i$ belongs to $P_\lambda^w$, then $k\le0$. 
\end{lemma}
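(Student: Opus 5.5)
Since $\mu\in\mathcal{F}(v,\eta)$, it satisfies the face equation
\[
\langle \lambda, \pi^\eta(w^{-1}\ast v)\eta\rangle = \langle \mu, v\eta\rangle,
\]
and every point of $P^w_\lambda$ satisfies the corresponding inequality
\[
\langle \lambda, (w^{-1}\ast v)\eta\rangle \le \langle \nu, v\eta\rangle .
\]
Also, $\langle \lambda, \pi^\eta(w^{-1}\ast v)\eta\rangle = \langle \lambda,(w^{-1}\ast v)\eta\rangle$, because $\pi_\eta(w^{-1}\ast v)\in W_\eta$ fixes $\eta$. So $\mu$ minimizes the linear functional $\nu\mapsto\langle \nu, v\eta\rangle$ on $P^w_\lambda$. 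The plan is to evaluate this functional on $\mu+k\alpha_i$ and compare it with its value at $\mu$.

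Suppose $\mu+k\alpha_i\in P^w_\lambda$. Then, by the inequality above (the one recalled before Theorem \ref{FacetInequalities}, valid for arbitrary dominant $\eta$ and $v\in W^\eta$),
\[
\langle \mu+k\alpha_i, v\eta\rangle \ge \langle \lambda,(w^{-1}\ast v)\eta\rangle = \langle \mu, v\eta\rangle .
\]
Subtracting gives $k\langle \alpha_i, v\eta\rangle\ge 0$. Using $W$-invariance of the pairing, $\langle \alpha_i, v\eta\rangle = \langle v^{-1}\alpha_i,\eta\rangle$, and this is strictly negative by Lemma \ref{ConvexLemma1}. Hence $k\le 0$.

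The only point needing care is that the defining inequalities of $P^w_\lambda$ hold for every dominant $\eta$ and $v\in W^\eta$, not just for fundamental coweights. This is exactly the Hilbert--Mumford characterization quoted from \cite{BJK2} before Theorem \ref{FacetInequalities}. Alternatively, one can derive it directly from Lemma \ref{Vertices}: the minimum of $\langle x\lambda, v\eta\rangle$ over $x\le w$ is attained at $x=q$, but that route is more work and is unnecessary here. Otherwise the argument is a short convexity computation, with no serious obstacle.
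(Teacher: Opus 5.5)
Your argument is correct and is essentially the paper's proof: apply the defining inequality for $(v,\eta)$ to $\mu+k\alpha_i$, subtract the face equality at $\mu$ to get $0\le k\langle v^{-1}\alpha_i,\eta\rangle$, and conclude with Lemma \ref{ConvexLemma1}. Your remark that $\pi_\eta(w^{-1}\ast v)$ fixes $\eta$ makes explicit the harmless switch between $\pi^\eta(w^{-1}\ast v)$ and $w^{-1}\ast v$, which the paper makes without comment.
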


\begin{proof}
If $\mu+k\alpha_i \in P^w_\lambda$, then we must have $\langle \lambda, (w^{-1}*v) \eta \rangle \le \langle \mu + k\alpha , v\eta \rangle$. Since $\mu \in \mathcal{F}(v, \eta)$, $\langle \lambda, (w^{-1} \ast v) \eta \rangle =\langle \mu, v\eta \rangle$, so that 
$$
0 \le k \langle v^{-1}\alpha_i, \eta \rangle.
$$
Lemma \ref{ConvexLemma1} now yields the result. 
\end{proof}

For this same $k$, we also have the following lower bound, which follows from \cite{BJK2}*{Lemma 4.5} and \cite{BJK2}*{Corollary 4.6}.

\begin{lemma} \label{ConvexLemma3}
If $\mu+k\alpha_i \in P^w_\lambda$, then $k \ge - \langle \mu, \alpha_i^\vee \rangle$. 
\end{lemma}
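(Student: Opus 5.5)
We prove Lemma \ref{ConvexLemma3} using the $s_i$-stability of $P^w_\lambda$. The basic observation is that $s_i$ fixes $v\eta$ up to the change computed in Lemma \ref{ConvexLemma2}, and that the polytope is symmetric under $s_i$ whenever $s_i w < w$. So the plan is: first establish that $s_i w < w$ (equivalently $s_i \ast w = w$), then apply Corollary \ref{StablePolytope}, and finally combine with Lemma \ref{ConvexLemma2}.

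\textbf{Step 1: $s_iw<w$.} Since $s_iv<v$, write $v = s_i v'$ with $v'<v$. Then $w^{-1}\ast v = (w^{-1}\ast s_i)\ast v'$. Suppose instead that $s_i w > w$, i.e.\ $w^{-1}s_i > w^{-1}$. Then
\[
w^{-1}\ast s_i = w^{-1}s_i, \qquad w^{-1}\ast v = w^{-1}s_i \ast v'.
\]
By the maximality in the definition of the Demazure product, one expects $q = (w^{-1}\ast v\, v^{-1})^{-1}$ to force $q\le w$ to interact with $s_i$. This route is messy, so I would avoid it. Instead I would rely on the general convexity fact cited from \cite{BJK2}*{Lemma 4.5, Corollary 4.6}: for any weight $\nu\in P^w_\lambda$ and simple root $\alpha_i$, the $\alpha_i$-string through $\nu$ inside $P^w_\lambda$ is constrained. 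Concretely, if $\nu=\mu+k\alpha_i\in P^w_\lambda$ and $\langle \nu,\alpha_i^\vee\rangle<0$, then $s_i\nu\in P^w_\lambda$ as well. This holds because the vertices $x\lambda$ with $\langle x\lambda,\alpha_i^\vee\rangle<0$ satisfy $s_ix>x$ and $x\le w$, and the lifting property then gives $s_ix\le s_i\ast w$. Equivalently, a point on the negative side of the $\alpha_i$-wall can be reflected.

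\textbf{Step 2: the inequality.} Suppose for contradiction that $k<-\langle\mu,\alpha_i^\vee\rangle$. Then
\[
\langle \mu+k\alpha_i,\alpha_i^\vee\rangle = \langle\mu,\alpha_i^\vee\rangle+2k < -k.
\]
Since $k\le 0$ by Lemma \ref{ConvexLemma2}, the reflection is
\[
s_i(\mu+k\alpha_i)=\mu+k'\alpha_i, \qquad k' = -\langle\mu,\alpha_i^\vee\rangle-k>0.
\]
If this reflected point lies in $P^w_\lambda$, then Lemma \ref{ConvexLemma2} gives $k'\le0$, a contradiction. Hence it suffices to show $s_i(\mu+k\alpha_i)\in P^w_\lambda$.

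\textbf{Step 3: the main obstacle.} The hard part is justifying this reflection, i.e.\ that points of $P^w_\lambda$ on the side $\langle\cdot,\alpha_i^\vee\rangle\le 0$ of the wall, reflected by $s_i$, remain in $P^w_\lambda$. Here $\langle\mu+k\alpha_i,\alpha_i^\vee\rangle< -k$, and the sign needs care. The statement I would aim for is exactly \cite{BJK2}*{Lemma 4.5/Cor.\ 4.6}: $P^w_\lambda\cap\{\langle\cdot,\alpha_i^\vee\rangle\ge c\}$ reflects into $P^w_\lambda$ appropriately, which comes from the $\mathfrak{sl}_{2,\alpha_i}$-string structure on vertices $x\lambda$, $x\le w$. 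If that direction fails, I would instead show $s_iw<w$ directly from $s_iv<v$ and the face condition: a point of the face with $\mu+k\alpha_i$ strictly below the reflected string would violate the facet inequality for $s_iv$. I would then conclude by Corollary \ref{StablePolytope}.
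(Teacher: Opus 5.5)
There is a genuine gap, at exactly the step you label the main obstacle.

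Your reduction in Step 2 is correct. Granting $k\le 0$ from Lemma \ref{ConvexLemma2}, if $k<-\langle\mu,\alpha_i^\vee\rangle$ then $s_i(\mu+k\alpha_i)=\mu+k'\alpha_i$ with $k'>0$. So everything hinges on showing $s_i(\mu+k\alpha_i)\in P^w_\lambda$, and none of the justifications you give for that works.

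\begin{itemize}
\item The vertex argument in Step 1 has the Bruhat comparison backwards. $\langle x\lambda,\alpha_i^\vee\rangle<0$ forces $x^{-1}\alpha_i\prec 0$, i.e.\ $s_ix<x$, not $s_ix>x$. Your conclusion $s_ix\le s_i\ast w$ only puts $s_ix\lambda$ in $P^{s_i\ast w}_\lambda$, whereas Lemma \ref{ConvexLemma2} needs membership in $P^w_\lambda$.
\item Even a correct statement about vertices does not by itself transfer to an arbitrary point of the negative half-space. Such a point can be a convex combination involving vertices on the positive side.
\item Your fallbacks fail outright. The claim $s_iw<w$ does not follow from $s_iv<v$ and the face condition. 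Take $w=e$: then $P^e_\lambda=\{\lambda\}$, $\mu=\lambda$ lies on every face $\mathcal{F}(v,\eta)$, yet $s_ie>e$. This is why the paper must treat $w<s_iw$ separately, both here and in Proposition \ref{MultProp}.
\item The upper half-space $\{\langle\cdot,\alpha_i^\vee\rangle\ge c\}$ does not reflect into $P^w_\lambda$ in general. The same example with $\langle\lambda,\alpha_i^\vee\rangle>0$ shows this.
\item You are unsure of the sign of $\langle\mu+k\alpha_i,\alpha_i^\vee\rangle$. The useful bound is $\langle\mu,\alpha_i^\vee\rangle+2k<k\le 0$, so the point lies strictly on the negative side.
\end{itemize}

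The missing ingredient is a reflection property: if $\nu\in P^w_\lambda$ and $\langle\nu,\alpha_i^\vee\rangle\le 0$, then $s_i\nu\in P^w_\lambda$. It has a short direct proof.
\begin{itemize}
\item By Lemma \ref{Vertices}, write $\nu=\sum_{x\le w}c_x\,x\lambda$ as a convex combination.
\item For each $x$ with $s_ix<x$ you have $s_ix\le w$, and $s_ix\lambda=x\lambda-\langle x\lambda,\alpha_i^\vee\rangle\alpha_i$ with $\langle x\lambda,\alpha_i^\vee\rangle\le 0$.
\item For $x$ with $s_ix>x$, $\langle x\lambda,\alpha_i^\vee\rangle\ge 0$.
\item Replacing $x\lambda$ by $s_ix\lambda$ for all $x$ with $s_ix<x$ gives $\nu+T\alpha_i\in P^w_\lambda$, where $T=-\sum_{s_ix<x}c_x\langle x\lambda,\alpha_i^\vee\rangle\ge -\langle\nu,\alpha_i^\vee\rangle\ge 0$.
\item Convexity then gives $s_i\nu=\nu-\langle\nu,\alpha_i^\vee\rangle\alpha_i\in P^w_\lambda$.
\end{itemize}

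With this supplied, your argument becomes a genuinely different proof from the paper's. The paper first proves $\langle\mu,\alpha_i^\vee\rangle\ge 0$ via \cite{BJK2}*{Lemma 4.5}. It then uses $s_i$-stability when $s_iw<w$. Otherwise it passes to $P^{s_iw}_\lambda\supseteq P^w_\lambda$ and invokes \cite{BJK2}*{Corollary 4.6} to know that $\mu$ is still the top of its $\alpha_i$-string there, so the string in $P^w_\lambda$ cannot descend below $s_i\mu$. Your completed route needs neither of those citations nor the case split on $s_iw$ versus $w$, only Lemma \ref{Vertices}. It also recovers $\langle\mu,\alpha_i^\vee\rangle\ge 0$ as the case $k=0$. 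As written, however, the key step is unproven and the justifications offered for it are incorrect.
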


\begin{proof}
First we argue that $\langle \mu, \alpha_i^\vee \rangle\ge 0$. Indeed, by \cite{BJK2}*{Lemma 4.5} we can write 
$$
\mu = \sum_{
\tiny \begin{array}{c}
u\le w \\
u^{-1}\alpha_i \succ 0
\end{array}
} b_u u\lambda,
$$
where each $b_u\ge 0$. Then we have 
$$
\langle b_u u\lambda, \alpha_i^\vee\rangle \ge 0
$$
on account of the positivity of $u^{-1}\alpha_i^\vee$, the dominance of $\lambda$, and $b_u\ge 0$. Thus, $\langle \mu, \alpha_i^\vee \rangle \geq 0$. 

It is now equivalent to show that $s_i\mu \preceq \mu + k\alpha_i \preceq \mu$ in dominance order. If $s_i P_\lambda^w = P_\lambda^w$ then this is trivial, since the $\alpha_i$-string through $\mu$ contained in the polytope is preserved under $s_i$, and $s_i\mu \in P^w_\lambda$ must be the ``other'' end of this string opposite to $\mu$. 

Otherwise, we let $\mu''$ to be the other end of this string--that is, $\mu'' = \mu+k''\alpha_i$ with $k''$ minimal such that $\mu'' \in P^w_\lambda$. Then we have in this case that $P^w_\lambda \subseteq P^{s_iw}_\lambda$, and by \cite{BJK2}*{Corollary 4.6} $\mu$ is still the maximal endpoint of the $\alpha_i$-string inside $P^{s_iw}_\lambda$,  we have $s_i\mu \in P^{s_iw}_\lambda$ and thus $s_i \mu \preceq \mu'' \preceq \mu$ by the previous case. 
\end{proof}

\begin{proposition} \label{MultProp}
The multiplicity of $\mu$ in $\mathrm{ch}(V_\lambda^w)$ coincides with the multiplicity of $s_i\mu$ in $D_{i}\left(\mathrm{ch}(V_\lambda^w)\right)=\mathrm{ch}(V_\lambda^{s_i \ast w})$. 
\end{proposition}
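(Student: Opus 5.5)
The plan is a direct computation with the Demazure operator $D_i$, restricted to a single $\alpha_i$-string. The point is that $D_i$ acts on $R(T)$ string by string, so only the weights of $\mathrm{ch}(V_\lambda^w)$ of the form $\mu+k\alpha_i$ matter. Lemmas \ref{ConvexLemma2} and \ref{ConvexLemma3} say these weights occupy only the half of the string running from $\mu$ down to $s_i\mu$. The coefficient of $s_i\mu$ can then be read off from the explicit formula for $D_i$ on monomials. The identity $D_i(\mathrm{ch}(V_\lambda^w))=\mathrm{ch}(V_\lambda^{s_i\ast w})$ is the Demazure monoid property recalled in Section \ref{Notation}, so only the multiplicity statement needs proof.

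\textbf{Step 1: localize to one string.} First I record the standard formula: for $\nu\in X^*(T)$ with $n=\langle\nu,\alpha_i^\vee\rangle$,
\[
D_i(e^\nu)=\begin{cases} e^\nu+e^{\nu-\alpha_i}+\dots+e^{s_i\nu}, & n\ge 0,\\ 0, & n=-1,\\ -\bigl(e^{\nu+\alpha_i}+\dots+e^{s_i\nu-\alpha_i}\bigr), & n\le -2.\end{cases}
\]
In particular $D_i(e^\nu)\in\sum_k\mathbb{Z}e^{\nu+k\alpha_i}$. Hence the coefficient of $e^{s_i\mu}$ in $D_i(\mathrm{ch}(V_\lambda^w))$ equals the coefficient of $e^{s_i\mu}$ in $D_i(f)$, where
\[
f=\sum_k \dim V_\lambda^w(\mu+k\alpha_i)\,e^{\mu+k\alpha_i}
\]
is the part of the character on the $\alpha_i$-string through $\mu$.

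\textbf{Step 2: bound the support of $f$.} Every weight of $V_\lambda^w$ lies in $P_\lambda^w$. So if $\mu+k\alpha_i$ occurs in $f$, Lemma \ref{ConvexLemma2} gives $k\le 0$, and Lemma \ref{ConvexLemma3} gives $k\ge -m$, where $m:=\langle\mu,\alpha_i^\vee\rangle$. The proof of Lemma \ref{ConvexLemma3} shows $m\ge 0$. Thus
\[
f=\sum_{j=0}^{m}c_j e^{\mu-j\alpha_i}, \qquad c_j=\dim V_\lambda^w(\mu-j\alpha_i),
\]
and $c_0$ is the multiplicity we want.

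\textbf{Step 3: compute the coefficient of $e^{s_i\mu}=e^{\mu-m\alpha_i}$ in $D_i(f)$.} The monomial $e^{\mu-j\alpha_i}$ has $\langle\,\cdot\,,\alpha_i^\vee\rangle=m-2j$. There are three cases.
\begin{itemize}
\item For $j\le m/2$, $D_i$ produces the weights from $\mu-j\alpha_i$ down to $\mu-(m-j)\alpha_i$. This range contains $s_i\mu$ only when $j=0$, and then with coefficient $+1$.
\item For $j>m/2$ with $m-2j\le-2$, $D_i$ produces minus the weights strictly between $\mu-j\alpha_i$ and $\mu-(m-j)\alpha_i$. All of these satisfy $\mu-(m-j)\alpha_i+\alpha_i\preceq\cdot\preceq\mu-j\alpha_i-\alpha_i$. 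Since $j\le m$, such a weight equals $\mu-m\alpha_i$ only if $j\ge m+1$, which is impossible.
\item For $m-2j=-1$, $D_i$ gives $0$.
\end{itemize}
Hence the coefficient of $e^{s_i\mu}$ in $D_i(f)$ is exactly $c_0$. The degenerate case $m=0$ is consistent with this: then $s_i\mu=\mu$, $f=c_0e^\mu$ and $D_i(f)=c_0e^\mu$.

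I expect the only real content to be Step 2, which is already supplied by Lemmas \ref{ConvexLemma2} and \ref{ConvexLemma3}. The remaining care is the bookkeeping in Step 3, namely checking that the negative contributions from the lower half of the string never reach the endpoint $s_i\mu$.
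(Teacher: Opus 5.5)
Your proposal is correct and is essentially the paper's argument. Like the paper, you localize to the $\alpha_i$-string through $\mu$, use Lemmas \ref{ConvexLemma2} and \ref{ConvexLemma3} to confine its support to $\{\mu,\mu-\alpha_i,\ldots,s_i\mu\}$, and read off from the formula for $D_i$ on monomials that only $D_i(e^\mu)$ contributes to $e^{s_i\mu}$. You spell out the bookkeeping the paper calls straightforward, and your uniform computation also covers the case $s_iw<w$, which the paper handles separately via $s_i$-invariance.

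One cosmetic slip: in your second bullet the bounds should read $\mu-(j-1)\alpha_i\preceq\cdot\preceq\mu-(m-j+1)\alpha_i$, since you swapped the $\pm\alpha_i$ shifts. Your verbal description (``strictly between'') and your conclusion are nonetheless right.
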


\begin{proof}
If $s_i w < w$, then this is trivial since $D_{i}\left(\mathrm{ch}(V_\lambda^w)\right)=\mathrm{ch}(V_\lambda^w)$ and $s_i\left(\mathrm{ch}(V_\lambda^w)\right)=\mathrm{ch}(V_\lambda^w)$ by the $\mathfrak{sl}_{2, \alpha_i}$ module structure. 

Otherwise, we have $w < s_i w$. Write $\mathrm{ch}(V_\lambda^w) = \sum c_\nu e^\nu$. As in the proof of Lemma \ref{ConvexLemma3}, note that $\langle \mu, \alpha_i^\vee \rangle \geq 0$. By definition, the action of the Demazure operator $D_i$ on a term $e^\nu$ is given by 

  \[
	D_i(e^{\nu}) = \frac{1-e^{-\alpha_i}s_i}{1-e^{-\alpha_i}}e^\nu=  \begin{cases}
	e^{\nu}+e^{\nu-\alpha_i} \dots + e^{s_i \nu}, & \text{for } \langle \nu, \alpha_i^{\vee} \rangle \geq 0, \\
	0, & \text{for } \langle \nu, \alpha_i^{\vee} \rangle =-1,\\
	-(e^{\nu+ \alpha_i}+ \dots + e^{s_i \nu-\alpha_i}), & \text{for } \langle \nu, \alpha_i^{\vee} \rangle <-1.
	\end{cases} 
	\]

Consider the string $\mathcal{S} = (\mu + \Z \alpha_i) \cap P_\lambda^w$. By the Lemmas \ref{ConvexLemma2} and \ref{ConvexLemma3}, its maximal element is $\mu$, and its lowest element is $\mu - k_0 \alpha_i$ for some $k_0 \ge -\langle \mu, \alpha_i^\vee\rangle$. Let $\mathcal{T} = (\mu + \Z \alpha_i) \cap P_\lambda^{s_iw}$. Note that $\mathcal{T}$ is the set $\{s_i\mu, s_i\mu + \alpha_i, \hdots, \mu - \alpha_i, \mu\}$. It is straightforward to check that the result of the action of $D_{i}$ as above on the part of the character supported on $\mathcal{S}$ satisfies that only $D_i(e^\mu)$ can produce any terms of the form $e^{s_i \mu}$ and there are no further cancellations. 


From this we see that the coefficient of $s_i\mu$ (and also still of $\mu$) in $\mathrm{ch}(V_\lambda^{s_iw})$ is $c_\mu$, as desired. 
\end{proof}

\begin{lemma} \label{InductFace}
In $P^{s_i \ast w}_\lambda$, $s_i\mu$ lies on the face $\mathcal{F}(s_iv, \eta)$.
\end{lemma}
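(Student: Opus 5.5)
The plan is to verify the two conditions in Definition \ref{FaceDef} directly. We need that $s_i\mu \in P^{s_i\ast w}_\lambda$. We need that $s_iv$ is a legitimate minimum-length representative in $W^\eta$. And we need that the defining equality of $\mathcal{F}(s_iv,\eta)$ for the polytope $P^{s_i\ast w}_\lambda$ holds at $s_i\mu$. The argument should reduce to formal properties of the Demazure product and the fact that the pairing is $W$-invariant.

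First, membership. The element $s_i \ast w$ satisfies $s_i(s_i\ast w) < s_i \ast w$ and $w \le s_i\ast w$. So by Corollary \ref{StablePolytope} the polytope $P^{s_i\ast w}_\lambda$ is $s_i$-stable, and by Lemma \ref{Vertices} it contains $P^w_\lambda$. Hence $s_i\mu \in P^{s_i\ast w}_\lambda$. Alternatively, Proposition \ref{MultProp} gives directly that $s_i\mu$ is a weight of $V^{s_i\ast w}_\lambda$ with nonzero multiplicity $c_\mu$.

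Second, $s_iv \in W^\eta$. For $\gamma \in \Phi^+_\eta$ we have $v\gamma \in \Phi^+$, and $s_iv\gamma$ could only fail to be positive if $v\gamma = \alpha_i$. That is impossible, since $v^{-1}\alpha_i \prec 0$ by the choice of $s_i$. Thus $s_iv(\Phi_\eta^+) \subset \Phi^+$.

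Third, the equality, which is the main point. I will use two facts about the Demazure product: it is associative, and $(a\ast b)^{-1} = b^{-1}\ast a^{-1}$ (immediate from the description as a maximum of $\{xy : x\le a,\ y\le b\}$, since inversion preserves Bruhat order). These give $(s_i\ast w)^{-1} = w^{-1}\ast s_i$. Since $s_i(s_iv) = v > s_iv$, we also have $s_i \ast (s_iv) = v$. Therefore
\[
(s_i\ast w)^{-1}\ast (s_iv) = (w^{-1}\ast s_i)\ast(s_iv) = w^{-1}\ast (s_i\ast s_iv) = w^{-1}\ast v .
\]
Replacing an element by its minimum-length representative does not change its action on $\eta$, because $W_\eta$ fixes $\eta$. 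Using also that $\mu \in \mathcal{F}(v,\eta)$ for $P^w_\lambda$, we get
\[
\langle \lambda, \pi^\eta((s_i\ast w)^{-1}\ast s_iv)\eta\rangle = \langle \lambda, (w^{-1}\ast v)\eta\rangle = \langle \mu, v\eta\rangle = \langle s_i\mu, s_iv\eta\rangle .
\]
This is exactly the defining equation of $\mathcal{F}(s_iv,\eta)$ in $P^{s_i\ast w}_\lambda$.

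The only step needing real care is the Demazure-product identity in the third step, namely associativity together with compatibility with inversion. If associativity is not taken as known, I would derive it from the inductive rule $v\ast w = v'\ast(s_i\ast w)$ recorded in Section \ref{Notation}, by induction on the length of the left factor.
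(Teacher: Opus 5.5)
Your proof is correct and follows the paper's argument: the key step is the simplification $(s_i\ast w)^{-1}\ast(s_iv)=w^{-1}\ast s_i\ast(s_iv)=w^{-1}\ast v$, combined with $\langle s_i\mu, s_iv\eta\rangle=\langle \mu, v\eta\rangle$. You also explicitly check that $s_i\mu\in P^{s_i\ast w}_\lambda$ and $s_iv\in W^\eta$, which the paper leaves implicit.
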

\begin{proof}
We need just check that 
$$
\langle \lambda, ((s_i*w)^{-1}*(s_iv))\eta \rangle = \langle s_i\mu, (s_iv)\eta \rangle
$$
The Weyl group element on the left side simplifies to $w^{-1}*s_i*(s_iv) = w^{-1}*v$. The right simplifies to $\langle \mu, v\eta\rangle$; this is just the original equality for $\mu \in \mathcal{F}(v, \eta)$ which holds by assumption. 
\end{proof}

\begin{theorem} \label{ConnectingTheorem}
Let $\mu \in \mathcal{F}(v, \eta)$ in $P_\lambda^w$. Then the multiplicity of $\mu$ in $\mathrm{ch}(V_\lambda^w)$ coincides with the multiplicity of $v^{-1}\mu$ in $\mathrm{ch}(V_\lambda^{v^{-1}*w})$. Moreover, $v^{-1}\mu$ lies on the face $\mathcal{F}(e, \eta)$ of $P_\lambda^{v^{-1} \ast w}$. 
\end{theorem}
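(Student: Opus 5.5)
We argue by induction on $\ell(v)$, peeling off one simple reflection at a time from the left of $v$ and using Proposition \ref{MultProp} together with Lemma \ref{InductFace}. If $v=e$ there is nothing to prove: $v^{-1}\ast w = w$, and $\mu\in\mathcal{F}(e,\eta)$ by hypothesis.

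For the inductive step, suppose $\ell(v)\ge 1$ and choose a simple reflection $s_i$ with $s_iv<v$, as fixed at the start of this section. Set $v':=s_iv$ and $w':=s_i\ast w$.

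We first check that $v'\in W^\eta$. If $\gamma\in\Phi_\eta^+$, then $v\gamma\succ 0$. Moreover $v\gamma\neq\alpha_i$, since $v^{-1}\alpha_i\prec 0$. Because $s_i$ permutes $\Phi^+\setminus\{\alpha_i\}$, it follows that $s_iv\gamma\succ 0$.

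By Proposition \ref{MultProp}, the multiplicity of $\mu$ in $\mathrm{ch}(V^w_\lambda)$ equals the multiplicity of $s_i\mu$ in $\mathrm{ch}(V^{w'}_\lambda)$. By Lemma \ref{InductFace}, $s_i\mu$ lies on the face $\mathcal{F}(v',\eta)$ of $P^{w'}_\lambda$. Also $s_i\mu$ is a weight of $V^{w'}_\lambda$, since its multiplicity equals that of $\mu$, which is nonzero. So the induction hypothesis applies to the triple $(s_i\mu,\,v',\,w')$, with $\ell(v')=\ell(v)-1$.

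The induction hypothesis gives:
\begin{itemize}
\item[(i)] the multiplicity of $s_i\mu$ in $\mathrm{ch}(V^{w'}_\lambda)$ equals the multiplicity of $v'^{-1}s_i\mu=v^{-1}\mu$ in $\mathrm{ch}(V^{v'^{-1}\ast w'}_\lambda)$;
\item[(ii)] $v^{-1}\mu$ lies on $\mathcal{F}(e,\eta)$ of $P^{v'^{-1}\ast w'}_\lambda$.
\end{itemize}

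It remains to identify the Weyl group elements. By associativity of the Demazure product,
$$
v'^{-1}\ast w' = (v^{-1}s_i)\ast s_i\ast w.
$$
Since $\ell(v^{-1}s_i)<\ell(v^{-1})$, the inductive description of the Demazure product gives $v^{-1}=(v^{-1}s_i)\ast s_i$. Hence $v'^{-1}\ast w' = v^{-1}\ast w$, and combining (i) and (ii) completes the induction.

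The main obstacle is making sure the hypotheses of Proposition \ref{MultProp} are available at every stage. That proposition, and Lemmas \ref{ConvexLemma1}--\ref{ConvexLemma3} behind it, are stated for the current triple $(\mu,v,w)$ with $\mu\in\mathcal{F}(v,\eta)$ and $s_iv<v$. So we must re-verify after each step that the new weight is genuinely a weight of the new Demazure module, and that $v'$ is still a minimal-length representative in $W^\eta$. Both were checked above, and once they hold the induction runs through without further difficulty.

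One could also run the argument non-inductively, choosing a reduced word $v=s_{i_1}\cdots s_{i_m}$ and applying Proposition \ref{MultProp} with $s_{i_1},s_{i_2},\dots$ successively. The bookkeeping is the same.
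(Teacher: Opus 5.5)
Your proof is correct and follows the paper's argument essentially step for step: induction on $\ell(v)$, stripping a left descent $s_i$ of $v$, then Proposition~\ref{MultProp}, Lemma~\ref{InductFace}, and the identity $(v^{-1}s_i)\ast s_i\ast w=v^{-1}\ast w$; your base case $v=e$ and your explicit checks that $s_iv\in W^\eta$ and of the Demazure-product identity fill in details the paper leaves tacit. The one thing to drop is the requirement that $s_i\mu$ be a genuine weight of $V^{s_i\ast w}_\lambda$, for three reasons: Proposition~\ref{MultProp}'s character computation works verbatim when the coefficient of $e^\mu$ is $0$; membership $s_i\mu\in P^{s_i\ast w}_\lambda$ already follows from $P^w_\lambda\subseteq P^{s_i\ast w}_\lambda$ and Corollary~\ref{StablePolytope}; and the paper needs the theorem for lattice points of the face not known to be weights, since the proof of Theorem~\ref{PreciseTheorem}(2) applies it with $q$ in place of $w$, where $V^q_\lambda(\mu)\neq 0$ is not known beforehand.
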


\begin{proof}
We proceed by induction on $\ell(v)$. The case $\ell(v)=1$ is given by Propsition \ref{MultProp}.

Write $v = s_{i}u$, so $s_i v=u \le v$. Again by Proposition \ref{MultProp} we have that the coefficient of $e^\mu$ in $\mathrm{ch}(V_\lambda^w)$ coincides with the weight of $e^{s_i\mu}$ in $\mathrm{ch}(V_\lambda^{s_i*w})$; and, $s_i\mu$ belongs to the face corresponding to $\mathcal{F}(u, \eta)$ on $P_\lambda^{s_i*w}$ by Lemma \ref{InductFace}.

Applying the induction hypothesis to $s_i \mu \in \mathcal{F}(u, \eta)$ in $P_\lambda^{s_i \ast w}$, we get that the multiplicity of $s_i\mu$ in $\mathrm{ch}(V_\lambda^{s_i*w})$ coincides with the multiplicity of $u^{-1}s_i\mu = v^{-1} \mu$ in $\mathrm{ch}(V_\lambda^{u^{-1}*s_i*w})= \mathrm{ch}(V_\lambda^{v^{-1}\ast w})$, and $u^{-1}s_i\mu = v^{-1}\mu$ belongs to the face $\mathcal{F}(e, \eta)$ on $P_\lambda^{u^{-1}*s_i *w}=P_\lambda^{v^{-1} \ast w}$. 
\end{proof}


\tdplotsetmaincoords{70}{325}
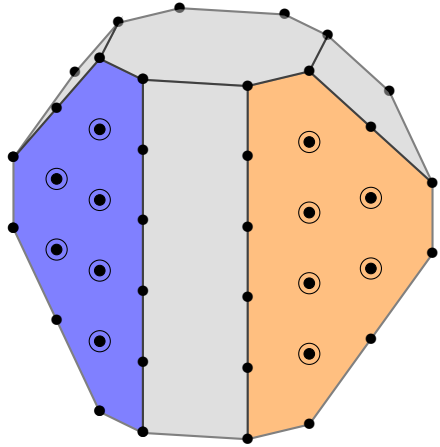
\begin{figure}
\begin{center}
\begin{tikzpicture}[tdplot_main_coords]

\coordinate(s3s2s3s1s2s3s2) at (-3/2, -1/2, -5/2);
\coordinate(s2s3s1s2s3s2) at (-3/2, -1/2, 5/2);
\coordinate(s3s1s2s3s1s2) at (-3/2, 1/2, -5/2);
\coordinate(s3s2s3s1s2s3) at (-1/2, -3/2, -5/2); 
\coordinate(s3s1s2s3s2) at (-3/2, 5/2, -1/2);
\coordinate(s1s2s3s1s2) at (-3/2, 1/2, 5/2);
\coordinate(s2s3s1s2s3) at (-1/2, -3/2, 5/2); 
\coordinate(s3s2s3s1s2) at (1/2, -3/2, -5/2); 
\coordinate(s3s1s2s3s1) at (-1/2, 3/2, -5/2);
\coordinate(s1s2s3s2) at (-3/2, 5/2, 1/2);
\coordinate(s3s2s3s2) at (5/2, -3/2, -1/2); 
\coordinate(s3s1s2s3) at (-1/2, 5/2, -3/2);
\coordinate(s2s3s1s2) at (1/2, -3/2, 5/2); 
\coordinate(s1s2s3s1) at (-1/2, 3/2, 5/2);
\coordinate(s3s1s2s1) at (1/2, 3/2, -5/2);
\coordinate(s3s2s3s1) at (3/2, -1/2, -5/2);
\coordinate(s2s3s2) at (5/2, -3/2, 1/2); 
\coordinate(s3s1s2) at (1/2, 5/2, -3/2);
\coordinate(s1s2s3) at (-1/2, 5/2, 3/2);
\coordinate(s3s2s3) at (5/2, -1/2, -3/2);
\coordinate(s1s2s1) at (1/2, 3/2, 5/2);
\coordinate(s2s3s1) at (3/2, -1/2, 5/2);
\coordinate(s3s2s1) at (3/2, 1/2, -5/2);
\coordinate(s3s2) at (5/2, 1/2, -3/2);
\coordinate(s2s3) at (5/2, -1/2, 3/2);
\coordinate(s1s2) at (1/2, 5/2, 3/2);
\coordinate(s3s1) at (3/2, 5/2, -1/2);
\coordinate(s2s1) at (3/2, 1/2, 5/2);
\coordinate(s2) at (5/2, 1/2, 3/2);
\coordinate(s3) at (5/2, 3/2, -1/2);
\coordinate(s1) at (3/2, 5/2, 1/2);
\coordinate(e) at (5/2, 3/2, 1/2);


\node at (s3s2s3s1s2s3s2) {\small $\bullet$};
\node at (s2s3s1s2s3s2) {\small $\bullet$};
\node at (s3s1s2s3s1s2) {\small $\bullet$};
\node at (s3s1s2s3s2) {\small $\bullet$};
\node at (s1s2s3s1s2) {\small $\bullet$};
\node at (s1s2s3s2) {\small $\bullet$};
\node at (s1s2s3s1) {\small $\bullet$};
\node at (s1s2s3) {\small $\bullet$};
\node at (s1s2s1) {\small $\bullet$};
\node at (s2s3s1) {\small $\bullet$};
\node at (s2s3) {\small $\bullet$};
\node at (s2s1) {\small $\bullet$};


\draw[thick,fill=orange,opacity=0.5] (s3s2s3s1s2s3) -- (s3s2s3s1s2) -- (s3s2s3s2) -- (s2s3s2)
  -- (s2s3s1s2) -- (s2s3s1s2s3) -- (s3s2s3s1s2s3); 
  

\draw[thick,fill=blue,opacity=0.5] (s3s2s3s1s2s3s2) -- (s2s3s1s2s3s2) -- (s1s2s3s1s2) -- (s1s2s3s2) -- (s3s1s2s3s2) -- (s3s1s2s3s1s2) -- (s3s2s3s1s2s3s2);
  

\draw[thick,fill=lightgray,opacity=0.5] (s2s3s1s2) -- (s2s3s1) -- (s2s3) -- (s2s3s2) -- (s2s3s1s2);

\draw[thick,,fill=lightgray,opacity=0.5] (s2s3s1s2s3s2) -- (s1s2s3s1s2) -- (s1s2s3s1) -- (s1s2s1) -- (s2s1) -- (s2s3s1) -- (s2s3s1s2) -- (s2s3s1s2s3) -- (s2s3s1s2s3s2);

\draw[thick,fill=lightgray,opacity=0.5] (s1s2s3s1s2) -- (s1s2s3s2) -- (s1s2s3) -- (s1s2s3s1) -- (s1s2s3s1s2);

\draw[thick,fill=lightgray,opacity=0.5] (s3s2s3s1s2s3s2) -- (s2s3s1s2s3s2) -- (s2s3s1s2s3) -- (s3s2s3s1s2s3) -- (s3s2s3s1s2s3s2);




\coordinate(w1) at (-1/2, -3/2, 5/2);
\coordinate(w2) at (-1/2, -3/2, 3/2);
\coordinate(w3) at (-1/2, -3/2, 1/2);
\coordinate(w4) at (-1/2, -3/2, -1/2);
\coordinate(w5) at (-1/2, -3/2, -3/2);
\coordinate(w6) at (-1/2, -3/2, -5/2);
\coordinate(w7) at (1/2, -3/2, 5/2);  
\coordinate(w8) at (1/2, -3/2, 3/2);  
\coordinate(w9) at (1/2, -3/2, 1/2);  
\coordinate(w10) at (1/2, -3/2, -1/2);  
\coordinate(w11) at (1/2, -3/2, -3/2);  
\coordinate(w12) at (1/2, -3/2, -5/2);
\coordinate(w13) at (3/2, -3/2, 3/2);  
\coordinate(w14) at (3/2, -3/2, 1/2);  
\coordinate(w15) at (3/2, -3/2, -1/2);  
\coordinate(w16) at (3/2, -3/2, -3/2);
\coordinate(w17) at (5/2, -3/2, 1/2);
\coordinate(w18) at (5/2, -3/2, -1/2);

\node at (w1) {\small $\bullet$};
\node at (w2) {\small $\bullet$};
\node at (w3) {\small $\bullet$};
\node at (w4) {\small $\bullet$};
\node at (w5) {\small $\bullet$};
\node at (w6) {\small $\bullet$};
\node at (w7) {\small $\bullet$};
\node at (w12) {\small $\bullet$};
\node at (w13) {\small $\bullet$};
\node at (w16) {\small $\bullet$};
\node at (w17) {\small $\bullet$};
\node at (w18) {\small $\bullet$};

\filldraw (w8) circle[radius=2pt];
\draw (w8) circle[radius=4pt];

\filldraw (w9) circle[radius=2pt];
\draw (w9) circle[radius=4pt];

\filldraw (w10) circle[radius=2pt];
\draw (w10) circle[radius=4pt];

\filldraw (w11) circle[radius=2pt];
\draw (w11) circle[radius=4pt];

\filldraw (w14) circle[radius=2pt];
\draw (w14) circle[radius=4pt];

\filldraw (w15) circle[radius=2pt];
\draw (w15) circle[radius=4pt];


\coordinate(x1) at (-3/2, -1/2,  5/2);
\coordinate(x2) at (-3/2, -1/2,  3/2);
\coordinate(x3) at (-3/2, -1/2,  1/2);
\coordinate(x4) at (-3/2, -1/2,  -1/2);
\coordinate(x5) at (-3/2,  -1/2, -3/2);
\coordinate(x6) at (-3/2, -1/2,  -5/2);
\coordinate(x7) at (-3/2, 1/2,  5/2);  
\coordinate(x8) at (-3/2, 1/2,  3/2);  
\coordinate(x9) at (-3/2, 1/2,  1/2);  
\coordinate(x10) at (-3/2, 1/2,  -1/2);  
\coordinate(x11) at ( -3/2, 1/2, -3/2);  
\coordinate(x12) at (-3/2, 1/2,  -5/2);
\coordinate(x13) at (-3/2, 3/2,  3/2);  
\coordinate(x14) at (-3/2, 3/2,  1/2);  
\coordinate(x15) at (-3/2, 3/2,  -1/2);  
\coordinate(x16) at (-3/2, 3/2,  -3/2);
\coordinate(x17) at (-3/2,  5/2, 1/2);
\coordinate(x18) at (-3/2,  5/2, -1/2);

\node at (x1) {\small $\bullet$};
\node at (x2) {\small $\bullet$};
\node at (x3) {\small $\bullet$};
\node at (x4) {\small $\bullet$};
\node at (x5) {\small $\bullet$};
\node at (x6) {\small $\bullet$};
\node at (x7) {\small $\bullet$};
\node at (x12) {\small $\bullet$};
\node at (x13) {\small $\bullet$};
\node at (x16) {\small $\bullet$};
\node at (x17) {\small $\bullet$};
\node at (x18) {\small $\bullet$};

\filldraw (x8) circle[radius=2pt];
\draw (x8) circle[radius=4pt];

\filldraw (x9) circle[radius=2pt];
\draw (x9) circle[radius=4pt];

\filldraw (x10) circle[radius=2pt];
\draw (x10) circle[radius=4pt];

\filldraw (x11) circle[radius=2pt];
\draw (x11) circle[radius=4pt];

\filldraw  (x14) circle[radius=2pt];
\draw (x14) circle[radius=4pt];

\filldraw (x15) circle[radius=2pt];
\draw (x15) circle[radius=4pt];

\end{tikzpicture}
\caption{\label{example-figure-2} Demazure polytope for type $B_3$, highest weight $\lambda = \rho = \omega_1+\omega_2+\omega_3$ and $w =  s_1s_3s_2s_3s_1s_2s_3$. Note that $s_1*w = w$. The face corresponding to $\eta = x_1, v = s_1$ is highlighted in orange, while the face corresponding to $\eta = x_1, v = e$ is highlighted in blue. The two faces and their weight spaces are in bijection via $s_1$.  }
\end{center}
\end{figure}

\begin{example}
Continuing with the example in type $B_3$ with $\lambda = \rho$, $w = s_1s_3s_2s_3s_1s_2s_3$, $v = s_1$, and $\eta = x_1$, Figure \ref{example-figure-2} illustrates that the portion of the character of $V_\lambda^w$ on the face $\mathcal{F}(v,\eta)$ is identified, via multiplication by $v^{-1}$, with the portion of the character of $V_\lambda^{v^{-1}*w}$ on the face $\mathcal{F}(e,\eta)$. This illustrates Theorem \ref{ConnectingTheorem}. 

Although not shown in the diagram, the same is true for the portion of the character of $V_\lambda^q$ on that same face. Hence the multiplicities along the face are the same for $V_\lambda^w$ and $V_\lambda^q$, which is the final argument in the proof of Theorem \ref{PreciseTheorem}(2). 
\end{example}

The proof of the second part of Theorem \ref{PreciseTheorem} is now an easy observation following from Theorem \ref{ConnectingTheorem}.


\begin{proof}[Proof of Theorem \ref{PreciseTheorem}(2)]
Recall that $q$ is the unique Weyl group element in the Bruhat interval $[e,w]$ such that $w^{-1} \ast v = q^{-1}v$. Fix $\mu$ a weight on the shared face $\mathcal{F}(v,\eta)$ of $P_\lambda^q$ and $P_\lambda^w$. Since $v^{-1}*q = v^{-1}*w$ by this judicious choice of $q$, Theorem \ref{ConnectingTheorem} gives that the multiplicity of $v^{-1}\mu$ in $\mathrm{ch}(V_\lambda^{v^{-1} \ast w}) = \mathrm{ch}(V_\lambda^{v^{-1} \ast q})$ coincides respectively with the multiplicity of $\mu$ in $\mathrm{ch}(V_\lambda^w)$ and $\mathrm{ch}(V_\lambda^q)$; the latter multiplicities are thus equal, as desired. 
\end{proof}





\end{document}